\newcommand{\factor}[2]{\left. \raise 2pt\hbox{\ensuremath{#1}} \right/
        \hskip -2pt\raise -2pt\hbox{\ensuremath{#2}}}
\numberwithin{equation}{section}
\renewcommand\subsection{
  \renewcommand{\sfdefault}{pag}
  \@startsection{subsection}%
  {2}{0pt}{.8\baselineskip}{.4\baselineskip}{\raggedright
    \sffamily\itshape\small\bfseries
  }}
\renewcommand\section{
  \renewcommand{\sfdefault}{phv}
  \@startsection{section} %
  {1}{0pt}{\baselineskip}{.8\baselineskip}{\centering
    \sffamily
    \scshape
    \bfseries
}}
\setlist[enumerate]{leftmargin=0.8cm}
\setlist[itemize]{leftmargin=0.8cm}
\setlist[description]{leftmargin=0.0cm}
\theoremstyle{plain}
\newtheorem{theorem}{Theorem}[section]
\newtheorem{proposition}[theorem]{Proposition}
\newtheorem{corollary}[theorem]{Corollary}
\newtheorem{def-thm}[theorem]{Definition-Theorem}
\newtheorem{lemma}[theorem]{Lemma}
\newtheorem{defi}[theorem]{Definition}
\newtheorem{definition}[theorem]{Definition}
\theoremstyle{definition}
\newtheorem{remark}[theorem]{Remark}
\DeclareMathOperator{\codim}{codim}
\DeclareMathOperator{\supp}{Supp\hspace{1pt}}
\def\min{\mathop{\mathrm{min}}}
\def\CC{\mathbb C}
\def\Q{\mathbb Q}
\def\PP{\mathbb P}
\let\e\epsilon
\let\l\lambda
\newcommand{\R}{{\mathbb R}}
\newcommand{\calC}{{\mathcal C}}
\newcommand{\calL}{{\mathcal L}}
\newcommand{\calO}{{\mathcal O}}
\begin{document}
\title{{Divisibility of polynomials and degeneracy of integral points}}
\author{Erwan Rousseau}
\address{ Institut Universitaire de France \& Aix Marseille Univ\newline \indent
	CNRS, Centrale Marseille, I2M, Marseille, France }
\email{erwan.rousseau@univ-amu.fr}
\author{Amos Turchet}
\address{Dipartimento di Matematica e Fisica, Università degli studi Roma 3,\newline  \indent L.go S. L. Murialdo 1, 00146 Roma, Italy }
\email{amos.turchet@uniroma3.it}
\author{Julie Tzu-Yueh Wang}
\address{Institute of Mathematics, Academia Sinica \newline
\indent No.\ 1, Sec.\ 4, Roosevelt Road\newline
\indent Taipei 10617, Taiwan}
\email{jwang@math.sinica.edu.tw}
\begin{abstract}
  We prove several statements about arithmetic hyperbolicity of certain blow-up varieties. As a corollary we obtain multiple examples of simply connected quasi-projective varieties that are pseudo-arithmetically hyperbolic. This generalizes results of Corvaja and Zannier obtained in dimension 2 to arbitrary dimension. The key input is an application of the Ru-Vojta's strategy. We also obtain the analogue results for function fields and Nevanlinna theory with the goal to apply them in a future paper in the context of Campana's conjectures.
  \end{abstract}
 \thanks{2010\ {\it Mathematics Subject Classification.}
11J87, 11J97, 14G05, 32A22.}

\maketitle 


\section{introduction}\label{introduction}

The goal of this project is to generalize the results of our previous paper \cite{RTW} to higher dimensions. In \cite{RTW} we dealt with two competing conjectures that aim to characterize algebraic varieties defined over a number field $k$ that have a potentially dense set of $k$-rational points. On one hand Campana conjectured that the class of these varieties is the class of \emph{special varieties}, introduced in \cite{Ca04}, while the Weak Specialness Conjecture (see \cite[Conjecture 1.2]{HT}) predicts that these should be the weakly special varieties, i.e. varieties that do not admit any \'etale cover that dominates a variety of general type. In \cite[Theorem 4.2]{RTW} we constructed examples of quasi-projective threefolds that are not special but weakly-special (see also \cite{BT,CP} for other constructions), and proved in \cite[Theorem 6.5]{RTW} that such examples possess properties that contradict function field and analytic analogues of the Weak-Specialness conjecture.

In order to generalize these results in higher dimensions we need two ingredients: the first one, that is the focus of the present paper, is the construction of simply connected varieties $X$ where we have a good control on the distribution of integral points and entire curves. The second one, which will be addressed in a forthcoming paper, is the construction of weakly-special varieties $Z$ fibered over $X$ and the study of the orbifold hyperbolicity of the base $X$.

In \cite{RTW} we used as ``arithmetic input'' a construction of Corvaja and Zannier in \cite{Corvaja:Zannier:2010} of a simply connected quasi-projective surface whose integral points are not Zariski dense. The key observation in \cite{Corvaja:Zannier:2010} was that the study of the distribution of integral points in such surfaces is connected to divisibility problems of polynomials evaluated at $S$-integers. In fact many classical problems in Diophantine Geometry, such as Siegel's finiteness theorem or the $S$-unit equation, can be rephrased via divisibility of polynomials. In this paper we use this observation to obtain several new results that extend \cite{Corvaja:Zannier:2010} to higher dimensions.

The first result is a generalization of \cite[Theorem 4]{Corvaja:Zannier:2010} to an arbitrary number of variables.

\begin{theorem}\label{keyprojective}
    Let $n\ge 2$.
    Let $k$ be a number field, let $S$ be a finite set of places including the Archimedean ones, and  let $\mathcal O_S$ be the ring of $S$-integers.
    Let $F_1,\hdots,F_r, G\in \mathcal O_S[x_0,\hdots,x_n]$ be absolutely irreducible homogeneous polynomials of the same degree.
    Suppose that the hypersurfaces defined by   $F_1,\hdots,F_r$ and $G$ are in general position, i.e. any intersection of  $n+1$   hypersurfaces is empty,  and $\deg (F_i)\ge \deg (G)$ for $ i=1,\hdots,r$.
    Then there exists a closed subset $Z \subset \PP^n$, independent of $k$ and $S$, such that there are only finitely many points $(x_0,\hdots,x_n)\in \PP^n(\calO_S) \setminus Z$ such that one of the following holds:
    \begin{itemize}
    \item[{\rm (i)}]
    $r\ge 2n+1$ and $F_i(x_0,\hdots,x_n)\mid G(x_0,\hdots,x_n)$ in the ring $\mathcal O_S$, for $ i=1,\hdots,r$; or
    \item[{\rm (ii)}]
    $r\ge n+2$ and $\prod_{i=1}^rF_i(x_0,\hdots,x_n) \mid G(x_0,\hdots,x_n)$  in the ring $\mathcal O_S$.
    \end{itemize}
    \end{theorem}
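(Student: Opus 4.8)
The plan is to translate the divisibility hypotheses into a height inequality on $\PP^n$ involving the hypersurfaces $D_i = \{F_i = 0\}$ and $D_G = \{G = 0\}$, and then to apply the Ru–Vojta strategy (via a general position / subspace-type theorem for divisors, e.g. the Ru–Vojta theorem or the Evertse–Ferretti form of the Subspace Theorem for hypersurfaces in general position) to conclude degeneracy. Concretely, write $F_i(x), G(x)$ for integer representatives of a point $P = (x_0, \ldots, x_n) \in \PP^n(\calO_S)$; since $\deg F_i \ge \deg G$ and the hypersurfaces are in general position, for a place $v \notin S$ the divisibility $F_i(x) \mid G(x)$ in $\calO_S$ forces $v(F_i(x)) \le v(G(x))$, hence any $v$ that contributes to the local height $\lambda_{D_i, v}(P)$ up to the level $\min$ of $\deg G$ also contributes to $\lambda_{D_G, v}(P)$. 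Summing over $v \notin S$ and using the fact that $G$ has bounded height (its coefficients lie in $\calO_S$) one gets, after normalizing degrees to $1$, a bound of the shape $\sum_{v \in S} \lambda_{D_i, v}(P) \ge h(P) - O(1)$ for each $i$ in case (i); in case (ii) one instead gets $\sum_{i=1}^r \sum_{v \in S} \lambda_{D_i, v}(P) \ge h(P) - O(1)$ from the single divisibility $\prod F_i \mid G$.

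Next I would feed these lower bounds for the $S$-part of the proximity to $\bigcup D_i$ into the Ru–Vojta inequality. The relevant output is that for hypersurfaces $D_1, \ldots, D_r$ in general position in $\PP^n$, and any $\epsilon > 0$, there is a proper closed $Z$ such that $\sum_{v \in S} \sum_i \lambda_{D_i, v}(P) \le (n + 1 + \epsilon) h_{\calO(1)}(P) \cdot (\text{normalizing factor}) + O(1)$ for $P \notin Z$ — this is exactly where the numbers $2n+1$ and $n+2$ come from: in case (i) we have $r$ independent inequalities each giving $h(P) - O(1)$ on the left, so $r \cdot h(P) \lesssim (n+1+\epsilon) \cdot \max_i(\deg F_i / \deg G) \cdot h(P)$, and since the degrees are arranged with $\deg F_i \ge \deg G$ one needs $r$ strictly larger than roughly $2n$ to get a contradiction after accounting for the degree ratios and the general-position count; in case (ii) the single inequality already has all $r$ summands on the left, so $r > n+1$, i.e. $r \ge n+2$, suffices. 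The closed set $Z$ is the exceptional set from the Ru–Vojta theorem applied to $D_1, \ldots, D_r$, which depends only on the polynomials $F_i$ (and $G$), not on $k$ or $S$; finiteness of points outside $Z$ then follows because the inequality $r \cdot h(P) \le (n+1+\epsilon) h(P) + O(1)$ with the implied constant uniform bounds $h(P)$, and there are only finitely many $\calO_S$-points of bounded height.

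A few technical points need care. First, I must set up the local height / Weil function formalism so that the divisibility translates cleanly; the subtlety is that $F_i \mid G$ only controls $v(F_i(x))$ up to the truncation level $v(G(x))$, so one should work with truncated counting functions or simply note $\min\{v(F_i(x)),\, c \cdot h_v\text{-type bound}\}$ suffices — and here the hypothesis $\deg F_i \ge \deg G$ is what guarantees no loss. Second, one must check the general position hypothesis (any $n+1$ of the $r+1$ hypersurfaces have empty intersection) is exactly the hypothesis required by the Ru–Vojta / Evertse–Ferretti theorem; the statement includes $G$ among the hypersurfaces in general position, which one should exploit or discard as convenient. The main obstacle I expect is bookkeeping the degree ratios $\deg F_i / \deg G$ through the Ru–Vojta inequality so that the threshold comes out to be exactly $2n+1$ in case (i) and $n+2$ in case (ii), and ensuring the exceptional $Z$ genuinely does not depend on $k$ and $S$ — this requires that the Ru–Vojta exceptional set be defined purely in terms of the geometry of the $D_i$, which it is, but one must cite the appropriate effective/geometric version rather than an existential one.
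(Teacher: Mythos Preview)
Your translation of the divisibility hypothesis is where the plan goes wrong. The condition $F_i(x)\mid G(x)$ in $\calO_S$ gives, for $v\notin S$, the inequality $\lambda_{D_i,v}(P)\le \lambda_{D_0,v}(P)$ (after degree normalization, with $D_0=\{G=0\}$); it does \emph{not} give $m_{D_i,S}(P)\ge h(P)-O(1)$. The sentence ``using the fact that $G$ has bounded height (its coefficients lie in $\calO_S$)'' confuses the height of the polynomial $G$ with the size of $G(x)$: the value $G(x)$ can have arbitrarily large $v$-adic valuation for $v\notin S$, so $N_{D_0,S}(P)$, and hence each $N_{D_i,S}(P)$, can be as large as $h(P)$. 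What one actually gets in case~(i), using general position via Lemma~\ref{subgeneralweil}, is only $\sum_{i=1}^r N_{D_i,S}(P)\le n\,N_{D_0,S}(P)\le n\,h(P)$, hence $\sum_i m_{D_i,S}(P)\ge (r-n)h(P)$; in case~(ii) one gets $\sum_i m_{D_i,S}(P)\ge (r-1)h(P)$.

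Feeding these correct lower bounds into the Evertse--Ferretti/Ru--Vojta inequality $\sum_i m_{D_i,S}(P)\le (n+1+\epsilon)h(P)$ on $\PP^n$ yields a contradiction only for $r\ge 2n+2$ in case~(i) and $r\ge n+3$ in case~(ii) --- each off by one from the stated thresholds. So your anticipated ``bookkeeping obstacle'' is a genuine one, and it cannot be fixed by more careful accounting on $\PP^n$. The paper closes this gap by passing to the blowup $\pi:\widetilde V\to \PP^n$ along $Y=\bigcup_i(D_i\cap D_0)$ and applying Theorem~\ref{Ru-Vojta} on $\widetilde V$ with the ample class $\ell(n+1)\pi^*A-E$. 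The point is that Ru--Vojta on the blowup produces an extra negative term $-\tfrac{1}{\ell}h_E(x)=-\tfrac{1}{\ell}h_Y(\pi(x))$ on the right-hand side; the divisibility hypothesis then gives $h_Y\ge N_{Y,S}=\sum_i N_{D_i,S}$ (since $\min(\lambda_{D_i,v},\lambda_{D_0,v})=\lambda_{D_i,v}$ for $v\notin S$), and this additional $\tfrac{1}{\ell}$ gain is exactly what recovers the sharp thresholds $r\ge 2n+1$ and $r\ge n+2$.
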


    In \cite[Theorem 4]{Corvaja:Zannier:2010} the original Theorem was obtained in the case $n=2$. Moreover, in Theorem \ref{keyprojective}, we obtain a stronger conclusion, namely the existence of an exceptional set $Z$ independent of the field of definition.
    The above Theorem yields the following Corollary that generalizes the classical $S$-unit equation (that is the case $g = 1$).
    
    \begin{corollary}[Compare with   {\cite[Corollary 1]{Corvaja:Zannier:2010}}]\label{cor:1}
    Let $g\in\mathcal O_S[x_1,\hdots,x_n]$ be a polynomial of degree $\le 1$ such that $g(0,\hdots,0)\ne 0$, $g(1,0,\hdots,0)\ne 0, \hdots,\, g(0,\hdots,0,1)\ne 0$.  The $n$-tuples $(x_1,\hdots,x_n)\in\mathcal O_S^{n}$ such that $\left(\left(1-\sum_{i=1}^n x_i\right)\prod_{i=1}^n x_i\right) \mid g(x_1,\hdots,x_n)$ are not Zariski-dense in $\mathbb A^n$.
    \end{corollary}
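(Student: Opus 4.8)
The plan is to homogenize and apply Theorem~\ref{keyprojective}(ii) (we may assume $n\ge 2$, the case $n=1$ being immediate from the $S$-unit equation). Write $g=a_0+a_1x_1+\cdots+a_nx_n$ with $a_i\in\mathcal O_S$. If $\deg g=0$, so $g=a_0\neq 0$, then $\bigl((1-\sum x_i)\prod x_i\bigr)\mid a_0$ forces each of $x_1,\dots,x_n$ and $1-\sum x_i$ to divide $a_0$; in particular each is a nonzero element of $\mathcal O_S$ which is a unit outside $S':=S\cup\{\mathfrak p:\mathfrak p\mid a_0\}$, hence lies in $\mathcal O_{S'}^{*}$, and $x_1+\cdots+x_n+(1-\sum x_i)=1$. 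By the theorem on the $S$-unit equation in $n+1$ variables such tuples lie in a finite union of proper subvarieties of $\A^n$, which is all we need. So assume henceforth $\deg g=1$ and set $G:=a_0x_0+a_1x_1+\cdots+a_nx_n$.

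Embed $\A^n\hookrightarrow\PP^n$ by $(x_1,\dots,x_n)\mapsto[1:x_1:\cdots:x_n]$, so that $\mathcal O_S^n$ is identified with a set of $S$-integral points of $\PP^n$, each carrying the primitive representative $(1,x_1,\dots,x_n)$. Introduce
\[
F_i:=x_i\ \ (1\le i\le n),\qquad F_{n+1}:=x_0-x_1-\cdots-x_n,\qquad F_{n+2}:=x_0 .
\]
Evaluating at $(1,x_1,\dots,x_n)$ gives $F_i\mapsto x_i$, $F_{n+1}\mapsto 1-\sum x_i$, $F_{n+2}\mapsto 1$, $G\mapsto g(x_1,\dots,x_n)$, so $\prod_{i=1}^{n+2}F_i$ evaluates to $\bigl(1-\sum x_i\bigr)\prod_{i=1}^n x_i$. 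Hence the hypothesis $\bigl((1-\sum x_i)\prod x_i\bigr)\mid g$ is precisely $\prod_{i=1}^{n+2}F_i\mid G$, which is condition~(ii) of Theorem~\ref{keyprojective} with $r=n+2$; moreover $F_1,\dots,F_{n+2},G$ are absolutely irreducible, homogeneous, and all of degree $1$.

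It remains to check that $F_1,\dots,F_{n+2},G$ are in general position, i.e.\ that any $n+1$ of these $n+3$ hyperplanes have empty common intersection, equivalently that the corresponding $(n+1)\times(n+1)$ coefficient determinants do not vanish. A direct computation shows that every $(n+1)$-subset not containing $G$ is independent (the hyperplanes $F_1,\dots,F_{n+2}$ form the standard configuration of $n+2$ hyperplanes in general position in $\PP^n$), while for a subset containing $G$ the determinant equals, up to sign, one of $a_0$, $a_0+a_i$, $a_i$, or $a_i-a_\ell$ with $i\neq\ell$. Now $a_0=g(0,\dots,0)$ and $a_0+a_i=g(e_i)$ are nonzero by assumption, so general position can fail only when some $a_i=0$ or $a_i=a_\ell$; and then the offending point, where $G$ meets $n$ of the $F_j$, necessarily lies on the hyperplane at infinity $\{x_0=0\}=\{F_{n+2}=0\}$. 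Since the $S$-integral points under study are disjoint from $\{x_0=0\}$ at every place outside $S$, these finitely many exceptional coefficient patterns are handled separately, the divisibility then forcing constraints that reduce, as in the case $\deg g=0$, to the $S$-unit equation. I expect this bookkeeping for the degenerate patterns of coefficients of $g$ to be the only real obstacle; everything else is a mechanical translation.

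With general position established, Theorem~\ref{keyprojective}(ii) produces a closed subset $Z\subsetneq\PP^n$ outside of which only finitely many of the projective points $[1:x_1:\cdots:x_n]$ satisfy the divisibility. Since $Z$ is proper and $\A^n$ is dense in $\PP^n$, the set $Z\cap\A^n$ is a proper closed subset of $\A^n$; therefore the tuples $(x_1,\dots,x_n)\in\mathcal O_S^n$ with $\bigl((1-\sum x_i)\prod x_i\bigr)\mid g(x_1,\dots,x_n)$ lie in $Z\cap\A^n$ together with a finite set, hence are not Zariski-dense in $\A^n$.
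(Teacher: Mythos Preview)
Your approach is exactly the paper's: homogenize, take the $n+2$ linear forms $X_0,\dots,X_n,X_0-\sum X_i$, let $G$ be the homogenization of $g$, and invoke Theorem~\ref{keyprojective}(ii). The paper's entire proof is the single sentence ``Apply Theorem~\ref{keyprojective}(ii) to the linear forms $X_0,\dots,X_n$, and $X_0-\sum_{i=1}^n X_i$'', so your separate treatment of $\deg g=0$ via the $S$-unit equation and your explicit check of the general-position hypothesis go well beyond what the paper records.

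Your determinant computation is correct, and you have put your finger on a genuine subtlety: the $(n+1)\times(n+1)$ minors containing $G$ are, up to sign, $a_0$, $a_0+a_i$, $a_i$, and $a_i-a_\ell$ ($i\ne\ell$, both $\ge 1$), and only the first two are forced nonzero by the stated hypotheses on $g$. So the general-position hypothesis of Theorem~\ref{keyprojective} can fail, and the paper's one-line proof does not address this either. Your proposed patch, however, does not work as stated. That the offending intersection points lie on $\{x_0=0\}$ is true but irrelevant: general position is a condition on the configuration of hyperplanes needed to \emph{invoke} Theorem~\ref{keyprojective}, not a condition on where the integral points sit. And the assertion that the degenerate coefficient patterns ``reduce to the $S$-unit equation'' is not justified; for instance, if $a_1=0$ then the divisibility forces $x_1\mid a_0+a_2x_2+\cdots+a_nx_n$, which couples $x_1$ to the other variables and is not visibly an $S$-unit equation. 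In short, you have correctly located a gap that the paper itself shares; closing it would require either strengthening the hypotheses on $g$ (to $a_i\ne 0$ and pairwise distinct for $i\ge 1$) or a separate argument in the degenerate cases that goes beyond what either you or the paper have written.
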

    \begin{proof}
    Apply Theorem \ref{keyprojective} (ii) to the linear forms $X_0$,\dots,$X_n$, and $X_0-\sum_{i=1}^n X_i$.
    \end{proof}

    As we will see, both results follow from a more general statement, Theorem \ref{key} in Section \ref{sec:proofofKey}. 

    We mentioned above that divisibility results as the ones of Theorem \ref{keyprojective} and Corollary \ref{cor:1}, are related to degeneracy of integral points on varieties. The first statement in this direction is the following theorem that studies certain blow up of $\PP^n$ along intersections of hypersurfaces.
  
 \begin{theorem}\label{IP'}
Let $n\ge 2$, $r\ge 2n+1$ and  $D_0,D_1, \dots, D_{r} $ be  hypersurfaces  in general position on $\mathbb P^n$ defined over $k$.     
Let $\pi: X\to \mathbb P^n$ be the blowup of the union of subschemes $D_i\cap D_0$, $1\le i\le r$, and let $\widetilde D_i$ be the strict transform of  $D_i$.  Let $D=\widetilde D_1+\cdots+\widetilde D_r$.  Then $ X\setminus D$ is arithmetically pseudo-hyperbolic.
\end{theorem}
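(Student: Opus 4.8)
The strategy is to translate the $S$-integral points of $X\setminus D$ into the divisibility data governed by Theorem~\ref{keyprojective}(i) (equivalently, by the general Theorem~\ref{key}, of which Theorem~\ref{keyprojective} is a special case), via a computation of Weil functions on the blow-up. Write $D_i=\{F_i=0\}$ with $F_i\in\mathcal O_S[x_0,\dots,x_n]$ absolutely irreducible for $0\le i\le r$, and put $G=F_0$. Enlarging $S$ (harmless, since it only enlarges the set of integral points) we may assume $\mathcal O_S$ is a principal ideal domain and that $\pi$ together with the $D_i$ extends to a good model over $\mathcal O_S$. Fix a set $R$ of $S$-integral points of $X\setminus D$; by definition $N_D(S,P):=\sum_{v\notin S}\lambda_{D,v}(P)$ is bounded independently of $P\in R$. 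Since a proper closed exceptional set is allowed, we discard $\pi^{-1}(D_0)=\widetilde D_0\cup\bigcup_{i=1}^{r}E_i$ (where $E_i$ is the exceptional divisor over $D_i\cap D_0$), which is independent of $k$ and $S$. For $P\in R$ with $\pi(P)\notin D_0$ the map $\pi$ is an isomorphism near $Q:=\pi(P)$, and $P\notin\widetilde D_i$ forces $Q\notin D_i$ for all $i$; thus $Q\in\PP^n\setminus(D_0\cup\cdots\cup D_r)$.

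Under the general position hypothesis each center $D_i\cap D_0$ is a codimension-two complete intersection, the $D_i\cap D_0$ are pairwise disjoint, and $\pi^*D_i=\widetilde D_i+E_i$. The standard description of Weil functions under the blow-up of a codimension-two complete intersection gives, for every place $v$ of $k$ and every $P\in R$ with $\pi(P)=Q$,
\[
\lambda_{E_i,v}(P)=\min\{\lambda_{D_i,v}(Q),\lambda_{D_0,v}(Q)\}+O(1),
\]
hence, by functoriality and additivity of Weil functions,
\[
\lambda_{\widetilde D_i,v}(P)=\lambda_{D_i,v}(Q)-\lambda_{E_i,v}(P)+O(1)=\max\{\lambda_{D_i,v}(Q)-\lambda_{D_0,v}(Q),0\}+O(1).
\]
Summing over $v\notin S$ and using that $N_D(S,P)$ is bounded on $R$ yields, uniformly for $Q=\pi(P)$ with $P\in R$,
\[
\sum_{v\notin S}\max\{\lambda_{D_i,v}(Q)-\lambda_{D_0,v}(Q),0\}=O(1)\qquad(i=1,\dots,r).
\]
In a primitive integral representative $x$ of $Q$, for $v\notin S$ the quantity $\lambda_{D_i,v}(Q)$ is a positive multiple of $\ord_v F_i(x)$; so the displayed bound is a quantitative form of the divisibility $F_i(x)\mid G(x)$ appearing in case~(i) of Theorem~\ref{keyprojective}.

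Since $r\ge 2n+1$, the Ru-Vojta argument behind Theorem~\ref{keyprojective}(i) --- in the form of the general Theorem~\ref{key} --- applies to $F_1,\dots,F_r,G$ and produces a closed subset $Z\subsetneq\PP^n$, independent of $k$ and $S$, outside of which only finitely many such $Q$ occur. Since those finitely many exceptional $Q$ lie outside $D_0$, their $\pi$-preimages are single points; therefore $R\setminus\pi^{-1}(D_0\cup Z)$ is finite. Thus $\Xi:=\pi^{-1}(D_0\cup Z)$ is a proper Zariski-closed subset of $X$, independent of $k$ and $S$, outside of which $X\setminus D$ has only finitely many $S$-integral points; as $k$, $S$ and the model were arbitrary, $X\setminus D$ is arithmetically pseudo-hyperbolic.

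The main obstacle is the local-height identity $\lambda_{E_i,v}=\min\{\lambda_{D_i,v},\lambda_{D_0,v}\}+O(1)$ together with checking that the resulting condition really falls under the divisibility theorem. Concretely, one must fix the model of the blow-up so that all error terms are uniform over $R$; use general position to guarantee that each $D_i\cap D_0$ is a well-behaved codimension-two center (so that the blow-up behaves as stated and $\widetilde D_i=\pi^*D_i-E_i$); and note that what emerges from integrality is the approximate divisibility $\sum_{v\notin S}\max\{\lambda_{D_i,v}-\lambda_{D_0,v},0\}=O(1)$ --- not exact divisibility, and with no constraint relating $\deg F_i$ and $\deg G$ --- which is why one appeals to Theorem~\ref{key} rather than to the bare statement of Theorem~\ref{keyprojective}(i). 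The remaining steps are routine manipulations of Weil functions and bookkeeping of the exceptional set.
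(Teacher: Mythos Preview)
Your approach is the paper's: reduce to Theorem~\ref{key}(i) via the blow-up Weil-function identity (packaged in the paper as Lemma~\ref{DivisibilityandIP} together with Proposition~\ref{basicintersection}). Two minor corrections are worth making.

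First, the centres $D_i\cap D_0$ are \emph{not} pairwise disjoint once $n\ge3$: the intersection $D_0\cap D_i\cap D_j$ has codimension~$3$, hence is nonempty. This does not affect the conclusion $\pi^*D_i=\widetilde D_i+E_i$, which only needs the codimension-$3$ statement (cf.\ Proposition~\ref{basicintersection}(i)), but your justification should be rewritten accordingly.

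Second, there is no need to sum over $v\notin S$. Definition~\ref{IPdef} already gives the \emph{pointwise} bound $\lambda_{\widetilde D_i,v}(P)\le\gamma_v$ for each $v\notin S$ and all $P\in R$, and combined with your identity $\lambda_{\widetilde D_i,v}(P)=\max\{0,\lambda_{D_i,v}(Q)-\lambda_{D_0,v}(Q)\}$ (up to an $M_k$-constant) this is exactly the hypothesis of Theorem~\ref{key}(i). Passing to the summed condition $\sum_{v\notin S}\max\{\ldots\}=O(1)$ is weaker and does not literally match the stated hypothesis. Relatedly, note that Theorem~\ref{key} \emph{also} carries the degree constraint $d_i\ge d_0$, so appealing to it does not by itself remove the issue you flag; the paper handles this (in the proof of Theorem~\ref{IP}) by observing that pseudo-hyperbolicity depends only on $\supp D$, so one may replace each $D_i$ by a suitable multiple to equalize all $d_i$ before invoking Theorem~\ref{key}.
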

 
This is the key result needed for the future applications to the study of weakly special varieties. In fact we can use Theorem \ref{IP'} to construct simply connected varieties whose integral points are not Zariski dense, thus generalizing Corvaja and Zannier's construction in arbitrary dimension.

\begin{proposition}[Compare with {\cite[Theorem 3]{Corvaja:Zannier:2010}}]\label{prop:simply_connected}
In the setting of Theorem \ref{IP'}, suppose that the divisor $D_0+D_1+\dots+D_r$ has simple normal crossing singularities.
Then the variety $X\setminus D$ appearing in Theorem \ref{IP'} is simply connected.
\end{proposition}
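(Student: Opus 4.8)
The plan is to compute the topological fundamental group of $X\setminus D$ in the complex topology (we may assume everything is defined over $\CC$) by comparing it with that of the Zariski-open set $V:=\PP^n\setminus(D_0\cup D_1\cup\cdots\cup D_r)$ and showing that every meridian loop of the $D_i$ dies in $\pi_1(X\setminus D)$. First I would record the elementary reductions. Since the blow-up centre $\bigcup_{i=1}^r(D_i\cap D_0)$ lies inside $D_1\cup\cdots\cup D_r$, the map $\pi$ restricts to an isomorphism $\pi^{-1}(V)\xrightarrow{\ \sim\ }V$, and $\pi^{-1}(V)$ is precisely the complement in $X\setminus D$ of $\widetilde D_0\cup E$, where $E$ is the exceptional divisor. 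Using the simple normal crossing hypothesis to put the $D_i$ into coordinate form near any point, a short local blow-up computation shows that $X$ — which for $n\ge 3$ can be singular over the loci $D_i\cap D_j\cap D_0$ — satisfies $\operatorname{Sing}(X)\subseteq D$, so that $X\setminus D$ is a smooth connected complex manifold; since $\widetilde D_0\cup E$ has real codimension $\ge 2$ in it, removing it induces a surjection $\pi_1(V)\twoheadrightarrow\pi_1(X\setminus D)$. Finally, because $\PP^n$ is simply connected, $\pi_1(V)$ is normally generated by the meridians $\mu_0,\mu_1,\dots,\mu_r$ of the irreducible components of $D_0+\cdots+D_r$, so it suffices to prove that each $\mu_i$ maps to $1$ in $\pi_1(X\setminus D)$.

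Killing $\mu_0$ is straightforward: choose a smooth point of $D_0$ off every other component — hence off the blow-up centre — so that a small $2$-disc transverse to $D_0$ there lifts, via the local isomorphism of $\pi$, to a disc in $X$ meeting only $\widetilde D_0$; since blowing up (at least) the smooth codimension-two centre $D_i\cap D_0$ separates the strict transforms of $D_0$ and $D_i$, we have $\widetilde D_0\cap D=\emptyset$, so this lifted disc lies in $X\setminus D$ and $\mu_0\mapsto 1$. The heart of the proof is killing $\mu_i$ for $i\ge 1$, where $\widetilde D_i$ itself has been removed; the idea is to slide a meridian of $D_i$ into the exceptional locus. I would pick a generic point $q\in D_i\cap D_0$ (a smooth point of both, lying on no other component), so that near $q$ the map $\pi$ is the blow-up of a smooth codimension-two centre $\{z_1=z_2=0\}$ with $D_i=\{z_1=0\}$, $D_0=\{z_2=0\}$; in the chart where $z_1=uv$, $z_2=v$ one has $\widetilde D_i=\{u=0\}$, while the fibre $F$ of $E$ over $q$ is a $\PP^1$ meeting $\widetilde D_i$ in a single point $a$ and meeting no other $\widetilde D_j$. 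Hence $F\setminus\{a\}\cong\AA^1$ is a contractible subset of $X\setminus D$, and one writes down an explicit free homotopy inside $X\setminus D$ — letting the $D_0$-coordinate $v$ tend to $0$ — carrying a small meridian of $D_i$ onto a loop in $F\setminus\{a\}$; therefore $\mu_i\simeq 1$. Combining, the surjection $\pi_1(V)\to\pi_1(X\setminus D)$ kills a set of normal generators, so it is the trivial homomorphism, and $\pi_1(X\setminus D)=1$.

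I expect the main obstacle to be this last step — checking with full care that the sliding homotopy remains inside $X\setminus D$ for all intermediate times — together with the bookkeeping for $n\ge 3$, where the blow-up centre $\bigcup_i(D_i\cap D_0)$ is no longer smooth and $X$ acquires singularities (mild, of codimension $\ge 3$, and contained in $D$). One has to check that the genericity choices of the auxiliary points avoid these loci and all other components, so that the clean $n=2$-type local picture governs exactly the part of the argument in which loops get contracted.
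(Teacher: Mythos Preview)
Your argument is correct and rests on the same geometric observation as the paper's (very terse) proof: a meridian of $D_i$ can be slid onto a generic fibre $F$ of the exceptional divisor over $D_i\cap D_0$, and since $F$ meets $D$ only in the single point $F\cap\widetilde D_i$, one has $F\setminus D\cong\CC$ and the loop contracts. Your write-up is considerably more careful --- making explicit the surjection $\pi_1(V)\twoheadrightarrow\pi_1(X\setminus D)$, the separate treatment of $\mu_0$ via $\widetilde D_0\cap D=\emptyset$, and the singularity bookkeeping for $n\ge 3$ --- but the core idea is identical to the paper's.
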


\begin{proof}
If $n = 2$ this was done in \cite[Example 4.4]{RTW}. If $n \geq 3$, consider a loop around $\widetilde{D}_i$. Now observe that, if $E$ is the exceptional divisor over $D_i\cap D_0$, the generic fiber of the restriction 
$\pi: E \setminus D \to D_i\cap D_0$  is isomorphic to $\CC$. Thus the loop becomes homotopically trivial in $X\setminus D$.
\end{proof}

Proposition \ref{prop:simply_connected} will be used in a subsequent paper to discuss analogues of a question of Hassett and Tschinkel in \cite[Problem 3.7]{HassettT} for function fields and entire curves.

Along the same lines we generalize to arbitrary dimensions \cite[Corollary 2]{Corvaja:Zannier:2010}.

\begin{theorem}\label{proposition7}
Let $n\ge 2$ and let $H_1,\hdots,H_{2n}$ be $2n$ hyperplanes in general position on $\mathbb P^n$ defined over $k$.   Choose $n+1$ points $P_i$, $1\le i\le n+1$ such that $P_i\in H_i$, $1\le i\le n+1$, and $P_i\notin H_j$ if $i\ne j$ for $1\le j\le 2n$.  Let $\pi: X\to\mathbb P^n$ be the blowup of the $n+1$ points $P_i$, $1\le i\le n+1$, and let $D\subset   X$ be the strict transform of $H_1+\cdots+H_{2n}$.   Then  
$  X\setminus D$ is arithmetically pseudo-hyperbolic.
\end{theorem}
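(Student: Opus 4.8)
The plan is to proceed along the same lines as Theorem~\ref{IP'}: transfer the statement to the blow-up $X$ and feed it into the Ru--Vojta theorem for a well-chosen big line bundle.

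First I would record the geometry. Set $L=\pi^{*}\calO_{\PP^{n}}(1)$ and let $E_{1},\dots,E_{n+1}$ be the exceptional divisors over $P_{1},\dots,P_{n+1}$, so that $\Pic(X)=\Z L\oplus\bigoplus_{i}\Z E_{i}$ and $K_{X}=-(n+1)L+(n-1)\sum_{i}E_{i}$. Since $P_{i}$ lies on $H_{i}$ with multiplicity $1$ and on no other $H_{j}$, the strict transforms are $\widetilde H_{i}=L-E_{i}$ for $1\le i\le n+1$ and $\widetilde H_{j}=L$ for $n+2\le j\le 2n$; hence $D=\sum_{j}\widetilde H_{j}=2nL-\sum_{i=1}^{n+1}E_{i}$ and $K_{X}+D=(n-1)L+(n-2)\sum_{i}E_{i}$ is big, as it must be. I would then check that $\widetilde H_{1},\dots,\widetilde H_{2n}$ are smooth irreducible divisors in general position on $X$: any $n+1$ of them involve at least two indices $\le n+1$; away from the $E_{i}$ the configuration is the original one, and $\widetilde H_{i}$ meets only $E_{i}$ among the exceptional divisors, so no $n+1$ of them share a point.

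For the Diophantine step I would fix a big line bundle $A=dL-e\sum_{i}E_{i}$ on $X$ (with $d,e>0$ to be chosen below, equal weight on all $E_{i}$ by symmetry of the $P_{i}$, and $e>0$ so that $A$ genuinely reflects the blow-up) and apply the Ru--Vojta theorem to $\widetilde H_{1},\dots,\widetilde H_{2n}$: fixing $\e>0$, there is a proper Zariski-closed $Z_{\e}\subset X$ with
\[
\sum_{j=1}^{2n}\beta(A,\widetilde H_{j})\,m_{S}(\widetilde H_{j},x)\ \le\ (1+\e)\,h_{A}(x)+O(1),\qquad x\in X(k)\setminus Z_{\e},
\]
where $\beta(A,\widetilde H_{j})=\liminf_{N}\big(Nh^{0}(X,NA)\big)^{-1}\sum_{m\ge 1}h^{0}(X,NA-m\widetilde H_{j})$. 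On the other hand an integral point $x\in(X\setminus D)(\calO_{S})$ has $N_{S}(\widetilde H_{j},x)=O(1)$, hence $m_{S}(\widetilde H_{j},x)=h_{\widetilde H_{j}}(x)+O(1)$, and off the $E_{i}$ also $h_{E_{i}}(x)\ge -O(1)$.

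The hard part is the remaining step: computing the $\beta(A,\widetilde H_{j})$ and verifying the numerics. As the $P_{i}$ are in general position, $h^{0}(X,NA-m\widetilde H_{j})$ — a space of forms of prescribed degree on $\PP^{n}$ with prescribed vanishing at the $P_{i}$ — takes its Riemann--Roch value throughout the range dominating the sum, giving $\beta(A,\widetilde H_{j})$ explicitly as a function of $(d,e)$, symmetric in the first $n+1$ and in the last $n-1$ indices. Substituting into the inequality above and using integrality, one is reduced to choosing $(d,e)$ for which $A$ is big and $\sum_{j}\beta(A,\widetilde H_{j})\widetilde H_{j}-c\,A$ is a nonnegative combination of $L$ and the $E_{i}$ for some $c>1$ (the $\widetilde H_{j}$- and $E_{i}$-heights being $\ge-O(1)$ on the points in play). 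I expect this to hold precisely because the configuration carries $2n$ hyperplanes against only $n+1$ points — the borderline numerology, which should degenerate if the number of hyperplanes drops or the number of points rises, consistently with the sharpness of the $n=2$ case of \cite{Corvaja:Zannier:2010}. Granting it, $c\,h_{A}(x)\le(1+\e)h_{A}(x)+O(1)$ with $\e<c-1$ forces $h_{A}(x)=O(1)$; since $A$ is big, this confines $x$, up to finitely many points, to a proper closed subset $W\subset X$, and then $Z:=Z_{\e}\cup W\cup\bigcup_{i}E_{i}$ is a proper Zariski-closed subset of $X$ — geometric, hence independent of $k$ and $S$ — outside which $(X\setminus D)(\calO_{S})$ is finite, i.e.\ $X\setminus D$ is arithmetically pseudo-hyperbolic. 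A back-up route, closer to \cite{Corvaja:Zannier:2010}, would instead push integral points down to $\PP^{n}$, reinterpret integrality as a divisibility relation among the $\calO_{S}$-values of the $H_{j}$ and of generic auxiliary hyperplanes cutting out the $P_{i}$ (kept in general position using $P_{i}\in H_{i}\setminus\bigcup_{j\ne i}H_{j}$), and invoke Theorem~\ref{key}.
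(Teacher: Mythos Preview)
Your overall architecture (choose a big $A=dL-e\sum_i E_i$, apply Ru--Vojta on $X$ to the $\widetilde H_j$, then use integrality to turn proximity into height) matches the paper, and your description of the geometry is correct. The genuine gap is in the ``hard step'': the numerical criterion you aim for, namely finding $c>1$ with $\sum_j\beta(A,\widetilde H_j)\,\widetilde H_j-cA$ a nonnegative combination of $L$ and the $E_i$, \emph{cannot be achieved}. Already for $n=2$ one checks (using Autissier's lower bound, which is essentially sharp here since $\widetilde H_i^2=0$ for $i\le n+1$) that with $t=e/d$ the two constraints $c\le\sum_j\beta_j/d$ and $c\ge\beta/e$ meet exactly at $c=1$ when $t=2-\sqrt3$, and are incompatible with $c>1$ for every $t$. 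In other words, with $2n$ divisors on an $n$-fold the Ru--Vojta inequality alone is borderline and a single application does not yield degeneracy; your expectation that the ``$2n$ hyperplanes versus $n+1$ points'' numerology closes the argument by itself is precisely what fails.

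What the paper does to close the gap is to bring in a \emph{second} Diophantine input. After applying Ru--Vojta on $X$ with $A\sim(\ell(n+1)+1)\pi^*H-\ell\sum_iE_i$, it does \emph{not} use only the trivial bound $h_{E_i}\ge -O(1)$; instead it pushes down to $\PP^n$ and applies Schmidt's subspace theorem (Theorem~\ref{SubspaceVojta}) to $H_1,\dots,H_{2n}$ to get $\sum_iN_{H_i,S}(\pi(x))\ge(n-1-\epsilon')h(\pi(x))$, which via $N_{E,S}(x)\ge\sum_iN_{H_i,S}(\pi(x))-O(1)$ for integral points gives the nontrivial lower bound $h_E(x)\ge(n-1-\epsilon')h_{\pi^*H}(x)$. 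Feeding this back into the Ru--Vojta inequality turns the borderline estimate into a strict one (the quantity $\delta=(n-1)\widetilde\beta+2\beta-2>0$), and degeneracy follows. Your proposal is missing exactly this Schmidt step; without it the argument stalls.

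Your back-up route through Theorem~\ref{key} also does not apply directly: that theorem needs $r\ge 2n+1$ divisors sharing a common $D_0$ so that the blown-up loci are the codimension-two schemes $D_i\cap D_0$, whereas here you have only $2n$ hyperplanes and the centers are isolated points not cut out in that way (and $n+1$ general points do not lie on a common hyperplane).
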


Finally we obtain a generalization of \cite[Proposition 1]{Corvaja:Zannier:2010} and \cite[Theorem 7]{Corvaja:Zannier:2010} as follows.

 \begin{theorem}\label{proposition1}
Let  $n\ge 2$ and $q\ge 3n$ be two integers; for every index $i\in \mathbb Z/q\mathbb Z$, let $H_i$ be a hyperplane in $\mathbb P^n$ defined over $k$.  Suppose that the $H_i$'s are in general position. For each index $i\in \mathbb Z/q\mathbb Z$ let $P_i$ be the intersection point $\cap_{j=0}^{n-1} H_{i+j}$.  Let $\pi: X\to \mathbb P^n$ be the blow-up of the points $P_1,\hdots, P_q$, let $\widetilde H_i\subset  X$ be the strict transform of $H_i$, and let $D=\widetilde H_1+\cdots+\widetilde H_q$.  Then $ X\setminus D$ is arithmetically pseudo-hyperbolic.
\end{theorem}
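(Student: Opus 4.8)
The plan is to reduce the statement to an application of Theorem \ref{IP'}, or more precisely to the same Ru--Vojta machinery that underlies it, by carefully analyzing the geometry of the blow-up $\pi\colon X\to\PP^n$ at the points $P_1,\dots,P_q$. First I would set up notation: for each $i\in\Z/q\Z$, the point $P_i=\bigcap_{j=0}^{n-1}H_{i+j}$ lies on the $n$ consecutive hyperplanes $H_i,H_{i+1},\dots,H_{i+n-1}$ and, by general position, on no other $H_\ell$. Hence on $X$ the strict transform $\widetilde H_i$ meets the exceptional divisor $E_m$ over $P_m$ precisely when $m\in\{i-n+1,\dots,i\}$, i.e. each $\widetilde H_i$ passes through exactly $n$ of the exceptional divisors and each $E_m$ is met by exactly $n$ strict transforms $\widetilde H_{m},\widetilde H_{m+1},\dots,\widetilde H_{m+n-1}$. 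The first key step is to compute the relevant intersection-theoretic data on $X$: the classes $\widetilde H_i=\pi^*H-\sum_{m=i-n+1}^{i}E_m$ in $\Pic(X)$, the canonical class $K_X=\pi^*K_{\PP^n}+(n-1)\sum_{m=1}^q E_m=-( n+1)\pi^*H+(n-1)\sum_m E_m$, and to check that $D=\widetilde H_1+\cdots+\widetilde H_q$ is a normal crossings divisor whose components, together with the $E_m$, give enough ``independent directions'' at every point of $X$.

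The second and central step is to verify that the Ru--Vojta inequality applies to $(X,D)$ — that is, to produce, for the pair $(X,D)$, a big line bundle together with a system of Weil/height functions for which the sum $\sum_v$ of local contributions of the components $\widetilde H_i$ is bounded below by $(1+\epsilon)$ times a height, outside a proper closed subset. Concretely I would pull back the standard ``moving'' argument for hyperplanes in general position on $\PP^n$ (the Schmidt subspace theorem / Evertse--Ferretti input) and correct it by the exceptional divisors: an $S$-integral point of $X\setminus D$ maps to a point of $\PP^n$ whose $H_i$-approximation is controlled, and the blow-up precisely ``absorbs'' the excess intersection of the $H_i$ at the $P_i$. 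The numerical condition $q\ge 3n$ is what makes the resulting coefficient exceed the threshold: very roughly, $D$ is $\Z$-linearly equivalent to $q\,\pi^*H-n\sum_m E_m$, so $D+K_X\sim (q-n-1)\pi^*H+(n-1-n)\sum E_m=(q-n-1)\pi^*H-\sum_m E_m$, and one needs the ``$\PP^n$-part'' coefficient $q$ to dominate the contributions of $K_X$ and of the exceptional locus; tracking the constants in the Ru--Vojta estimate shows $q\ge 3n$ suffices. This is essentially the higher-dimensional avatar of \cite[Proposition 1]{Corvaja:Zannier:2010}, where $n=2$ and $q\ge 6=3n$.

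The third step is bookkeeping: assemble the local estimates into a global height inequality, invoke the version of the Ru--Vojta theorem already used to prove Theorem \ref{IP'} (applied to the big line bundle $\pi^*\Ocal(1)$ or a suitable twist on $X$), and conclude that the set of $S$-integral points of $X\setminus D$ is contained in a proper Zariski-closed subset $Z\subsetneq X$ — i.e. $X\setminus D$ is arithmetically pseudo-hyperbolic. One should also note the link with Theorem \ref{keyprojective}: an integral point avoiding $D$ corresponds, on $\PP^n$, to a point at which each product $\prod_{j=0}^{n-1}F_{i+j}$ (linear forms cutting out $H_{i+j}$) is ``large'', which is exactly the divisibility-type condition controlled there; so an alternative route is to deduce Theorem \ref{proposition1} directly from Theorem \ref{keyprojective}(ii) or from the general Theorem \ref{key}, by grouping the $q$ hyperplanes into the $n$-fold products attached to the points $P_i$.

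The main obstacle I expect is the second step: getting the numerical inequality to close with the bound $q\ge 3n$ rather than something weaker. The delicate point is that the exceptional divisors contribute with the ``wrong sign'' to $K_X$ (a $+(n-1)$ discrepancy), so one must show the approximation to the $\widetilde H_i$ near the $E_m$ genuinely compensates; this requires a careful choice of the filtration of sections (or of the auxiliary linear system) on the blow-up $X$, adapted to the cyclic combinatorial pattern $P_i=\bigcap_{j=0}^{n-1}H_{i+j}$, and a precise comparison of the ``weights'' $\gamma$ in the Ru--Vojta estimate with the multiplicities of $D$ along the $E_m$. Everything else — the normal-crossings check, the intersection computations, and the final descent to a proper closed subset independent of the specific integral points — should be routine given Theorem \ref{IP'} and the techniques already in place.
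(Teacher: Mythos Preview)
Your plan does not actually reach a proof, because both reduction routes you sketch fail for structural reasons, and the direct Ru--Vojta route you gesture at is missing its only nontrivial ingredient.

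Neither Theorem \ref{IP'} nor Theorem \ref{key} applies here. Those results concern a blow-up along the codimension-$2$ loci $D_i\cap D_0$ for a \emph{single fixed} divisor $D_0$; the integrality condition on the strict transforms then translates, via Lemma \ref{DivisibilityandIP}, into the comparison $\lambda_{D_i,v}\le \lambda_{D_0,v}$ with one target $D_0$. In the present situation the blow-up centers are the points $P_i=\bigcap_{j=0}^{n-1}H_{i+j}$, each of which is an intersection of $n$ hyperplanes, and there is no divisor $D_0$ playing the role required by Theorem \ref{key}. Your ``grouping into $n$-fold products'' does not produce a single divisor to compare against, so the divisibility formulation (i) or (ii) of Theorem \ref{key} cannot be invoked. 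Likewise, the log-canonical bookkeeping with $K_X+D\sim (q-n-1)\pi^*H-\sum_mE_m$ is not what drives the argument; no theorem in the paper takes $K_X+D$ big as input.

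What the paper actually does is apply Theorem \ref{Ru-Vojta} directly on $X$ with the line sheaf $\mathcal L=\mathcal O(D)$ (not $\pi^*\mathcal O(1)$) and the divisors $\widetilde H_1,\dots,\widetilde H_q$, after checking they are in general position on $X$. The entire content of the proof is the inequality $\beta_{D,\widetilde H_i}>1$ (Lemma \ref{beta3}); once this is known, for any set $R$ of $(D,S)$-integral points one has $\sum_i m_{\widetilde H_i,S}(x)=h_D(x)+O(1)$ and Ru--Vojta immediately gives $(\beta-1-\epsilon)h_D(x)\le O(1)$ outside a fixed closed set, whence finiteness since $D$ is big. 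The condition $q\ge 3n$ enters in two concrete places you do not identify: first, a combinatorial nefness lemma (Lemma \ref{DDi}) showing that $D-m\widetilde H_i$ is nef for $0\le m\le n$, proved by explicitly covering the points $P_1,\dots,P_q$ with $q-m$ hyperplanes of the right multiplicities using the cyclic pattern; second, an explicit asymptotic Riemann--Roch computation of $\sum_{m\le nN}h^0(ND-m\widetilde H_i)$ using $D^k.\widetilde H_i^{\,n-k}=q^k-n^{k+1}$, which reduces $\beta>1$ to the elementary inequality $q^{n+1}-(q-n)^{n+1}-(n+1)q^n-(n^2-1)n^n(q-n)+n^{n+1}>0$ for $q\ge 3n$. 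Your outline contains neither of these steps, and without them the ``careful choice of filtration'' remark remains a placeholder rather than an argument.
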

 
  \begin{theorem}\label{Theorem7}
Let  $n\ge 2$, $q\ge 3n$ be an integer; for every index $i\in \mathbb Z/q\mathbb Z$, let $F_i$, $1\le i\le q$ be linear form in   $k[x_0,\hdots,x_n]$ in general position.  
Then there exists a Zariski closed subset $Z$ of $\mathbb P^n$ such that the set of points $[x_0:\cdots:x_n]\in\mathbb P^n(k)$ satisfying, for each $i\in \mathbb Z/q\mathbb Z$, the equality of ideals
\[
F_i(x_0,\hdots,x_n)\cdot (x_0,\hdots,x_n)=\prod_{j=i-n+1}^i (F_j(x_0,\hdots,x_n),\hdots,F_{j+n-1}(x_0,\hdots,x_n))
\]
is contained in $Z$.
\end{theorem}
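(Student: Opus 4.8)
The plan is to reduce Theorem~\ref{Theorem7} to the degeneracy statement of Theorem~\ref{proposition1} by translating the displayed ideal-theoretic equality into a collection of divisibility conditions on the integral model, exactly as in the passage from Corollary~\ref{cor:1} back to Theorem~\ref{keyprojective}. First I would choose coordinates and interpret a point $[x_0:\cdots:x_n]\in\PP^n(k)$ with coprime integral coordinates: the equality of ideals in the statement is, after localizing away from a fixed finite set $S$ of places (enlarging $S$ if necessary so that $\calO_S$ is a PID), equivalent to a divisibility relation. Indeed, the right-hand side $\prod_{j=i-n+1}^i (F_j,\dots,F_{j+n-1})$ is (the value at the point of) the product of the ideals cut out by the $n$ consecutive hyperplanes passing through $P_j$; since the $H_i$ are in general position, the ideal $(F_j,\dots,F_{j+n-1})$ at an integral point measures exactly the ``common divisor'' of the $n$ linear values $F_j(x),\dots,F_{j+n-1}(x)$, i.e. the contribution of the point $P_j$. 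So the displayed equality says precisely that $F_i(x)$ is divisible, in $\calO_S$, by the product over the $n$ blow-up points $P_j$ lying on $H_i$ of these local common-divisor factors — which is the arithmetic shadow of the statement ``the integral point, lifted to the blow-up $X$ of Theorem~\ref{proposition1}, avoids the strict transform $\widetilde H_i$ unless it meets one of the exceptional divisors''.

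Concretely, the key step is the dictionary: a point $[x]\in\PP^n(k)$ satisfies the ideal equality for all $i\in\Z/q\Z$ if and only if its lift $\tilde x$ to $X$ is an $\calO_S$-integral point of $X\setminus D$ (after possibly enlarging $S$ by a set of bad places depending only on the $F_i$, not on $x$). One direction is immediate from the definition of the blow-up: the $i$-th equation forces the strict transform of $V(F_i)$ to be disjoint from $\tilde x$ away from the exceptional locus, because the factor $F_i(x)\cdot(x_0,\dots,x_n)$ on the left exactly accounts for the intersection multiplicity of $\widetilde H_i$ with the exceptional divisors over the points $P_j\in H_i$. For the converse one checks that integrality of $\tilde x$ with respect to $D=\widetilde H_1+\cdots+\widetilde H_q$ unwinds, place by place, to the same product formula. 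Then Theorem~\ref{proposition1} supplies a Zariski-closed $Z'\subset X$, depending only on the configuration, such that $(X\setminus D)(\calO_S)\setminus Z'$ is finite; pushing forward, $Z:=\overline{\pi(Z')}\cup(\text{the finitely many stray points})$ is the desired closed subset of $\PP^n$, and one notes it does not depend on $k$ or $S$ because $Z'$ does not.

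The main obstacle I anticipate is making the ideal-to-divisibility translation uniform and precise at the finitely many ``bad'' primes — those dividing resultants of subsets of the $F_i$, or where the chosen integral model of $X$ is not smooth. Away from these the equality of ideals localizes coordinate-wise to a clean $\gcd$ statement, but one must verify that the finite set of excluded primes can be absorbed into $S$ once and for all, independently of the point $x$; this is where one invokes the general-position hypothesis to bound the relevant resultants. A secondary point requiring care is the compatibility of the $n$ factors in the product on the right-hand side: for a given $i$, the blow-up points $P_{i-n+1},\dots,P_i$ are distinct and the corresponding local factors are supported at disjoint sets of primes for all but finitely many $x$ — again controllable via general position — so the product of ideals equals the intersection, and the divisibility genuinely decouples into $n$ independent conditions matching the $n$ exceptional divisors met by $\widetilde H_i$. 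Once these uniformity issues are dispatched, the conclusion is a formal consequence of Theorem~\ref{proposition1}.
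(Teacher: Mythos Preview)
Your proposal is correct and follows essentially the same route as the paper: translate the ideal equality into the statement that the lift of $\mathbf{x}$ to the blow-up $X$ is a $(D,S)$-integral point, then invoke Theorem~\ref{proposition1} and push the exceptional set forward to $\PP^n$. The paper streamlines the step you flag as the ``main obstacle'' by working directly in the language of local Weil functions (Definition--Theorem~\ref{weil}): the ideal equality becomes $\lambda_{H_i,v}(\mathbf{x}) = \sum_{j=i-n+1}^{i}\lambda_{P_j,v}(\mathbf{x})$ up to an $M_k$-constant, and combined with $\pi^*H_i = \widetilde H_i + \sum_j E_j$ this gives $\lambda_{\widetilde H_i,v}(\pi^{-1}(\mathbf{x}))=0$ up to an $M_k$-constant directly --- the $M_k$-constant formalism absorbs the bad primes and the disjointness-of-supports issue automatically, so no separate uniformity argument is needed.
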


We also mention that most of these results can be rephrased as hyperbolicity of complements of divisors in certain varieties that are higher dimensional analogues of Del Pezzo surfaces. For example Theorem \ref{IP'} applies to open subsets of the blow up of $\PP^3$ in $r \geq 7$ lines. Interestingly enough the condition $r \geq 7$ characterizes precisely the blow-ups that are \emph{not} weakly Fano (and hence not Mori dream spaces).
\medskip

\paragraph{\texttt{Ideas of the proof}} The main technical tool to obtain the proof of the previous results, as in our previous paper \cite{RTW}, is to apply (a generalization of) the main theorem of Ru-Vojta (see Theorem \ref{Ru-Vojta}). In fact in \cite{RTW} we have already proven that the Ru-Vojta method can be used to recover the main theorem of \cite{CZAnnals}, that was used in \cite{Corvaja:Zannier:2010} to obtain the degeneracy results that we are generalizing in this paper. However, in this situation, the computations of the constant $\beta$, which is the crucial part of the proof, is less direct and make use of several ingredients, among them an adaptation of Autissier's ideas of \cite{aut}.

Moreover, by carefully tracing the exceptional set, and following a strategy already discussed by Levin in \cite{levin_annal}, we can obtain a stronger result, namely pseudo-arithmetic hyperbolicity instead of degeneracy of integral points. In particular, this shows that in our statements, the closed subset outside of which the integral points are finite, does not depend on the field of definition (as expected in the stronger versions of the conjectures of Lang and Vojta); we refer to \cite[Section 3]{RTW} for more details and discussions. In fact our results are indeed instances of the Lang-Vojta conjecture for integral points.

The paper is organized as follows: in Section \ref{Preliminary} we collect some preliminary definitions and properties of local heights and we link divisibility problems with integral points. In Section \ref{sec:RuVojta} we state the Main Theorem of Ru-Vojta with better control of the exceptional set. In Section \ref{sec:proofofKey} we prove Theorems \ref{keyprojective} and Theorem \ref{IP'}. In Section \ref{sec:prop7} we compute $\beta$ in several cases and we prove Theorem \ref{proposition7}. In Section \ref{sec:proof_last} we prove Theorem \ref{proposition1} and Theorem \ref{Theorem7}. In Section \ref{sec:analytic} we collect the analogue results for holomorphic maps, while in Section \ref{sec:ff} we present the results over function fields, together with the proof of the key Ru-Vojta statement.

\medskip

\paragraph{\texttt{Divisibility and Integral Points}}
As observed in \cite{Corvaja:Zannier:2010}, there is a connection between distribution of integral points on certain rational quasi-projective varieties, and divisibility problems. In fact, Corvaja and Zannier show the following: given points $P_1,\dots,P_n \in \PP^2$ that are the intersection of two curves $\calC_1,\calC_2$, let $\pi: X \to \PP^2$ be the blow up along $P_1,\dots,P_n$. Then, for a point $Q \in \PP^2$, $Q \neq P_i$, one can relate the condition that $\pi^{-1}(Q)$ is integral on $X$ with respect to the strict transform of $\calC_1$, with a divisibility condition for the polynomials defining $C_1$ and $C_2$ (locally). 

We formalize this in Lemma \ref{DivisibilityandIP}, where we generalize to arbitrary dimensions \cite[Lemma 1]{Corvaja:Zannier:2010}.

In fact, divisibility conditions are connected to the celebrated Vojta's conjectures (as in \cite[Conjecture 3.4.3]{vojta_lect}) in many ways: Silverman in \cite{Silverman2005} observed that GCD results for $S$-units in number fields, as in the seminal paper \cite{Bugeaud:Corvaja:Zannier:2003}, are related to Vojta's conjecture on certain blow-ups. Since then, a number of articles have been devoted to exploit this connection. We cite for example \cite{BCT,CaTur,CZGm,GSW,GuoWang,levin_gcd,levinwang,PWgcd,Tur,WaYa,yasumona,yasuams}.

\subsection*{Acknowledgements}
We thank Pietro Corvaja and Umberto Zannier for several discussions. ER was supported by Institut Universitaire de France and the ANR project \lq\lq FOLIAGE\rq\rq{}, ANR-16-CE40-0008. AT was partially supported by PRIN ``Advances in Moduli Theory and Birational Classification'' and is a member of GNSAGA-INdAM. JW was supported in part by Taiwan's MoST grant 108-2115-M-001-001-MY2.

 \section{Heights and integral points}\label{Preliminary}
 We collect here standard facts and definitions on local and global Weil heights and integral points. We refer to  \cite[Chapter 10]{Lang}, \cite[B.8]{HinSil},  \cite[Section 2.3]{levin_gcd} or \cite[Section 2]{Sil} for more details about this section. We have decided to avoid the use of integral models to discuss integral points since it is more natural in the arithmetic context of the Ru-Vojta method.

Let $k$ be a number field and $M_{k}$ be the set of places, normalized so that it satisfies the product formula
\[
\prod_{v\in M_{k}}  |x|_v=1,\qquad \text{ for }  x\in k^\times \text{.}
\]
For a point $[x_0:\cdots:x_n]\in \mathbb P^n(k)$,
the standard logarithmic height is defined by
\[
h([x_0:\cdots:x_n])=\sum_{v\in M_{k}}\log \max\{|x_0|_v,\hdots,|x_n|_v\},
\]
and it is independent of the choice of coordinates $x_0,\hdots,x_n$ by the product formula.

A \emph{$M_k$-constant} is a family $\{\gamma_v\}_{v\in M_k}$ of real numbers, with all but finitely many equal to zero. Equivalently it is a real-valued function $\gamma: M_k \to \R$ which is zero almost everywhere.  Given two families $\{\lambda_{1v}\}$ and $\{\lambda_{2v}\}$, we say $\lambda_{1v} \le \lambda_{2v}$ holds up to an $M_k$-constant if there exists an $M_k$-constant $\{\gamma_v\}$ such that $\lambda_{2v} - \lambda_{1v} \geq \gamma_v$ for all $v \in M_k$.  We say $\lambda_{1v} = \lambda_{2v}$ up to an $M_k$-constant if $\lambda_{1v} \le \lambda_{2v}$ and $\lambda_{2v} \le \lambda_{1v}$ up to $M_k$-constants.

Let $V$ be projective variety defined over a number field $k$.  The classical theory of heights associates to every Cartier divisor $D$ on $V$ a \emph{height function} $h_D:V(k)\to \mathbb R$ and a \emph{local Weil function}  (or \emph{local height function})
$\lambda_{D,v}: V(k)\setminus \supp (D)\to \mathbb R$
for each $v\in M_k$, such that
\[
\sum_{v\in M_{k}}\lambda_{D,v}(P)=h_D(P)+O(1)
\]
for all $P\in V(k)\setminus \supp (D)$.

We also recall some basic properties of local Weil functions associated to closed subschemes from \cite[Section 2]{Sil}.
Given a closed subscheme $Y$ on a projective variety $V$ defined over $k$,
we can associate to each place $v\in M_k$ a function
\[
\lambda_{Y,v}: V\setminus \supp (Y)\to \mathbb R.
\]
Intuitively, for each $P\in V$ and $v\in M_k$, we think of
\[
\lambda_{Y,v}( P)=-\log(v\text{-adic distance from $P$ to $Y$}).
\]
To describe $\lambda_{Y,v}$ more precisely, we use (see for example \cite[Lemma 2.5.2]{Sil}) that for a closed subscheme $Y$ of $V$, there exist effective divisors $D_1,\hdots,D_r$ such that \( Y=\cap D_i \).  Then, the function $\lambda_{Y,v}$ can be described as follows:

\begin{def-thm}[{\cite[Lemma 2.5.2]{vojta_lect}, \cite[Theorem 2.1 (d)(h)]{Sil}}] \label{weil}Let $k$ be a number field, and $M_k$ be the set of places on $k$.  Let $V$ be a projective variety over $k$ and let $Y = \cap D_i \subset V$ be a closed subscheme of $V$.
We define the (local) Weil function for $Y$ with respect to $v\in M_k$ as
\begin{align}\label{WeilYdef}
\lambda_{Y, v}=\min_i \{\lambda_{D_i, v}\},
\end{align}
This is independent of the choices of the $D_i$'s up to an $M_k$-constant, and satisfies
\[
\lambda_{Y_1,v}(P) \le \lambda_{Y_2,v}(P)
\]
up to an $M_k$-constant whenever $Y_1\subseteq Y_2$.  Moreover, if $\pi:  \widetilde V \to V$ is the blowup of $V$ along $Y$ with the exceptional divisor $E$, 
$\lambda_{Y,v}(\pi(P)) = \lambda_{E, v}(P)$ up to an $M_k$-constant as functions on ${ \widetilde V}(k)\setminus E$.
\end{def-thm}

The height function for a closed subscheme $Y$ of $V$ is defined, for $P \in V(k) \setminus Y$, by
\[
h_Y(P):=\sum_{v\in M_k} \lambda_{Y,v}(P).
\]
We also define two related functions for a closed subscheme $Y$ of $V$, depending on a finite set of places $S$ of $k$: the \emph{proximity function} $m_{Y,S}$ and the \emph{counting function} $N_{Y,S}$, for $P\in V(k)\setminus Y$, as
\[
m_{Y,S}(P):=\sum_{v\in S} \lambda_{Y,v}(P)\qquad \text{ and }\qquad N_{Y,S}(P):=\sum_{v\in M_k\setminus S} \lambda_{Y,v}(P)=h_Y(P)-m_{Y,S}(P).
\]

We can now define the notion of $(D,S)$-integral points following Vojta.

\begin{defi}[{\cite[Definition 13.1]{Vojta}}]\label{IPdef}
Let $k$ be a number field and $M_k$ be the set of places on $k$. Let $S\subset M_k$ be a finite subset containing
all Archimedean places.  Let $X$ be a projective variety over $k$, and let $D$ be an effective divisor on $X$.  A set $R\subseteq X(k)\setminus \supp D$ is a $(D,S)$-\emph{integral} set of points if there is a Weil function $\{\lambda_{D,v}\}$ for $D$ and an $M_k$-constant $\{\gamma_v\}$ such that for all $v\notin S$, $\lambda_{D,v}(P)\le \gamma_v$ for all $P\in R$.
\end{defi}
By the uniqueness (up to an $M_k$-constant) of Weil functions for a Cartier divisor $D$ (see \cite[Theorem 9.8 (d)]{Vojta}), one can use a fixed Weil function $\lambda_D$ in Definition \ref{IPdef} (after adjusting $\{\gamma_v\}$).

Finally we recall the definition of arithmetic hyperbolicity.

\begin{defi}\label{arithhyper}
Let $X$ and $D$ as above. We say that $X\setminus D$ is \emph{arithmetically pseudo-hyperbolic} if there exists a proper closed subset $Z\subset X$ such that for any number field $k'\supset k$, every finite set of places $S$ of $k'$ containing the Archimedean places, and every set $R$ of ($k'$-rational) $(D,S)$-integral points on $X$, the set $R\setminus Z$ is finite.  We say that $X\setminus D$ is \emph{arithmetically  hyperbolic} if it is pseudo-arithemtically hyperbolic with $Z=\emptyset$.
\end{defi}

The main tool for relating questions of divisibility between values of polynomials to integrability for points on varieties is established in the following lemma. We state it in terms of local heights since it is more convenient and it admits an explicit analogue using local equations as in \cite{Corvaja:Zannier:2010}.

\begin{lemma}[Compare to {\cite[Lemma 1]{Corvaja:Zannier:2010}}]\label{DivisibilityandIP}
Let $X$ be a projective variety over a number field $k$, and let $S\subset M_k$ be a finite subset containing
all Archimedean places.   Let $D$ be an  effective  Cartier divisor  of $X$ and $W$ be  a closed subscheme  of $X$ such that the codimension of $D\cap W$ is at least 2.  Let $\pi: \widetilde X\to X$ be the blowup along  some closed subscheme of $X$ containing $D \cap W$ such that  $\pi^*D=\widetilde D + \pi^{-1}( D \cap W) $, where   $\widetilde D$ is the strict transform of  $D$.  Let    $R$ be a set of points in $\widetilde X(k)$.  Then the following are equivalent.
\begin{enumerate}
\item[{\rm (i)}] $\lambda_{\widetilde D,v}(P)=0$ up to a $M_k$-constant for  $P\in R$ and  $v\notin S$,
\item[{\rm (ii)}] $\lambda_{D,v}(\pi(P))\le \lambda_{W,v}(\pi(P))$ up to a $M_k$-constant for $P\in R$ and $v\notin S$.
\end{enumerate}
\end{lemma}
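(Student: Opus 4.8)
The plan is to unravel the definitions of the Weil functions on both sides and use the pullback formula $\pi^*D = \widetilde D + \pi^{-1}(D\cap W)$ together with the functoriality property $\lambda_{Y,v}(\pi(P)) = \lambda_{\pi^{-1}(Y),v}(P)$ (up to an $M_k$-constant) recorded in Definition-Theorem \ref{weil}. First I would translate (ii): since $\lambda_{W,v}(\pi(P)) = \lambda_{\pi^{-1}W,v}(P)$ and, by the same principle applied to the subscheme $D\cap W$ (whose inverse image is the exceptional-type divisor $\pi^{-1}(D\cap W)$), one has $\lambda_{D\cap W,v}(\pi(P)) = \lambda_{\pi^{-1}(D\cap W),v}(P)$ up to an $M_k$-constant. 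The key elementary identity is that for any closed subscheme, $\lambda_{D\cap W,v} = \min\{\lambda_{D,v}, \lambda_{W,v}\}$ up to an $M_k$-constant (this is \eqref{WeilYdef} with the evident choice of defining divisors, using that $D$ is already a divisor and $W = \cap D_i$, so $D\cap W = D \cap (\cap D_i)$). Hence (ii) says precisely that $\lambda_{D,v}(\pi(P)) = \lambda_{D\cap W,v}(\pi(P)) = \lambda_{\pi^{-1}(D\cap W),v}(P)$ up to an $M_k$-constant, for $P\in R$, $v\notin S$.

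Next I would use additivity of local Weil functions for Cartier divisors: from $\pi^*D = \widetilde D + \pi^{-1}(D\cap W)$ we get, up to an $M_k$-constant,
\[
\lambda_{D,v}(\pi(P)) = \lambda_{\pi^*D,v}(P) = \lambda_{\widetilde D,v}(P) + \lambda_{\pi^{-1}(D\cap W),v}(P).
\]
Combining this with the reformulation of (ii) in the previous paragraph, (ii) is equivalent to saying that $\lambda_{\widetilde D,v}(P) + \lambda_{\pi^{-1}(D\cap W),v}(P) = \lambda_{\pi^{-1}(D\cap W),v}(P)$ up to an $M_k$-constant, i.e. $\lambda_{\widetilde D,v}(P) = 0$ up to an $M_k$-constant, which is exactly (i). To run the cancellation cleanly one should note that all local height functions involved are bounded below by an $M_k$-constant (local Weil functions of effective divisors and closed subschemes enjoy this), so that "$a + c = c$ up to an $M_k$-constant" does indeed force "$a = 0$ up to an $M_k$-constant" when $a, c$ are each so bounded; this is where the hypothesis that $D$ is effective and that we restrict to $v\notin S$ and to the integral-type regime matters.

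The main obstacle, and the point deserving the most care, is the bookkeeping of which identities hold genuinely versus only up to $M_k$-constants, and in particular justifying the functoriality statement $\lambda_{W,v}\circ\pi = \lambda_{\pi^{-1}W,v}$ for the closed subscheme $W$ (not just a divisor): the cited Definition-Theorem \ref{weil} states this for the blow-up along $Y$ with exceptional divisor $E$, so one must check that blowing up along a \emph{larger} subscheme containing $D\cap W$ (as allowed in the hypothesis) still yields $\lambda_{D\cap W,v}(\pi(P)) = \lambda_{\pi^{-1}(D\cap W),v}(P)$ up to an $M_k$-constant — this follows because $\pi$ restricted away from the center is an isomorphism and $\lambda_{Y,v}$ is a birational-type invariant, but it should be spelled out. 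A secondary subtlety is verifying that $\pi^{-1}(D\cap W)$ is genuinely an effective Cartier divisor so that additivity of Weil functions applies to the decomposition $\pi^*D = \widetilde D + \pi^{-1}(D\cap W)$; this is part of the standing hypothesis on $\pi$, so it can simply be invoked. Once these functoriality and positivity facts are in place, the equivalence is a short chain of substitutions as above.
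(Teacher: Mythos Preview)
Your proof is correct and follows essentially the same route as the paper's: both arguments combine the additive decomposition $\lambda_{D,v}\circ\pi = \lambda_{\widetilde D,v} + \lambda_{Y,v}\circ\pi$ (with $Y=D\cap W$) coming from $\pi^*D=\widetilde D+\pi^{-1}(D\cap W)$, together with the identity $\lambda_{Y,v}=\min\{\lambda_{D,v},\lambda_{W,v}\}$, and then read off the equivalence. Your extra remarks about boundedness below and about the blowup center possibly being larger than $D\cap W$ are legitimate technical checks that the paper simply absorbs into the phrase ``functorial property of Weil functions''; they do not change the argument.
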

\begin{proof}
Let $Y=D \cap W$.  The functorial property of Weil functions implies that
\begin{align}\label{tildeD0}
\lambda_{D,v}(\pi(P))=\lambda_{\pi^*D,v}(P)= \lambda_{\widetilde D,v}(P)+\lambda_{Y,v}(\pi(P))
\end{align} 
up to a $M_k$-constant.
On the other hand, it follows from  \eqref{WeilYdef} that
\begin{align}\label{minWD}\
\lambda_{Y,v}(\pi(P))=\min\{\lambda_{D,v}(\pi(P)), \lambda_{W,v}(\pi(P)) \}
\end{align}  
up to a $M_k$-constant  for  any $v\in M_k$.
Then the equivalence of (i) and (ii) can be easily deduced  from \eqref{tildeD0} and the \eqref{minWD}.
 \end{proof}

 \section{Ru--Vojta Theorem and some basic propositions}\label{sec:RuVojta}
We first recall the following definitions and geometric properties from \cite{ruvojta}.
\begin{defi}
Let
$\mathcal L$ be a big line sheaf  and let $D$ be a nonzero effective Cartier divisor on a projective variety $X$.  We define
\[
\beta_{\mathcal L,D } :=\lim_{N\to\infty}\dfrac{\sum_{m=1}^{\infty} h^0 (V ,  \mathcal L^N(-m D)  ) }{N\cdot h^0(V,\mathcal L^N)}.
\]
If $A$ is a big (Cartier) divisor we let $\beta_{A,D}:= \beta_{\mathcal O(A),D } $. 
\end{defi}

The constant $\beta$ is the crucial ingredient in Ru--Vojta's main Theorem. Before stating it we recall the following definition.

\begin{defi}
Let $D_1,\hdots,D_q$  be effective Cartier divisors on a variety $X$ of dimension $n$.
\begin{enumerate}
\item[{\rm (i)}] We say that $D_1,\hdots,D_q$ are {\it in general position} if for any $I\subset \{1,\dots,q\}$, we have 
\[
 \dim (\cap_{i\in I} \supp  D_i) \le n-\#I  \qquad \text{ with } \dim \emptyset = -\infty.
\]
\item[{\rm (ii)}] We say that $D_1,\hdots,D_q$ {\it intersect properly} if for any $I\subset \{1,\dots,q\}$, $x\in  \cap_{i\in I} \supp  D_i$, and local equations $\phi_i$ for $D_i$ in $x$, the sequence $(\phi_i)_{i\in I}$ is a regular sequence in the local ring $\mathcal O_{X,x}$.
\end{enumerate}
\end{defi}

\begin{remark}\label{gpCM}
If $D_1,\cdots,D_q$ intersect properly, then they are in general position.
By \cite[Theorem 17.4]{Matsumura}, the converse holds if $X$ is Cohen-Macaulay.
\end{remark}

The following is the main arithmetic Theorem of Ru and Vojta.

\begin{theorem} \cite[General Theorem (Arithmetic Part)]{ruvojta}\label{Ru-Vojta} Let $k$ be a number field and $M_k$ be the set of places on $k$. Let $S\subset M_k$ be a finite subset containing the Archimedean places.  Let $X$ be a projective variety defined over $k$.
 Let $D_1,\hdots,D_q$  be effective Cartier divisors intersecting properly on $X$.
 Let  $\mathcal L$ be a big line sheaf on $X$.  Then for any $\e>0$, there exists a   proper Zariski-closed subset 
 $Z\subset X$, independent of $k$ and $S$,  such that
\begin{align}
 \sum_{i=1}^q \beta_{\mathcal L, D_i} m_{D_i,S}(x)\le (1+\e) h_{\mathcal L}(x)
\end{align}
holds for  all but finitely many $x$ in  $X(k)\setminus Z$.
\end{theorem}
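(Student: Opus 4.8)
The plan is to recall the proof of the General Theorem of Ru--Vojta \cite{ruvojta}, of which Theorem~\ref{Ru-Vojta} is essentially a restatement, and to point out why the exceptional locus may be taken independent of $k$ and $S$ --- the feature needed in the applications. First I would fix $\epsilon>0$ and, invoking only the definition of $\beta_{\mathcal{L},D_i}$ together with the bigness of $\mathcal{L}$, choose an integer $N$, depending solely on $\epsilon$ and on the geometry of $(X,\mathcal{L},D_1,\dots,D_q)$, large and divisible enough that
\[
\frac{1}{N\,h^0(X,\mathcal{L}^N)}\sum_{m\ge 1}h^0\!\bigl(X,\mathcal{L}^N(-mD_i)\bigr)\ \ge\ \beta_{\mathcal{L},D_i}-\epsilon\qquad(1\le i\le q),
\]
and so that $|\mathcal{L}^N|$ defines a rational map $\varphi$ birational onto its image in $\PP(V_N^\vee)$, where $V_N:=H^0(X,\mathcal{L}^N)$. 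I would then fix a $k$-basis of $V_N$; the associated linear forms realize $\varphi$, and everything constructed so far depends only on $\epsilon$ and on the $k$-rational geometry of $(X,\mathcal{L},D_1,\dots,D_q)$.

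Next I would run the local estimate that is the technical heart of \cite{ruvojta}. Given $x\in X(k)$ off the indeterminacy locus of $\varphi$, for each $v\in S$ one picks an index $i(v)$ with $\lambda_{D_{i(v)},v}(x)$ maximal and a basis $\{u^{(i(v))}_j\}$ of $V_N$ Minkowski-reduced for the decreasing filtration of $V_N$ by order of vanishing along $D_{i(v)}$. The transition matrix to the fixed basis lies in $\GL(V_N)(k)$, so its contribution cancels by the product formula; and comparing the local Weil functions of the $u^{(i(v))}_j$ with $\lambda_{D_{i(v)},v}$, summing over the steps of the filtration, using the identity $\sum_j\ord_{D_i}(u^{(i)}_j)=\sum_{m\ge1}h^0(\mathcal{L}^N(-mD_i))$ and the choice of $N$, and finally summing over $v\in S$, one reaches an inequality of the shape
\[
\sum_{i=1}^q\beta_{\mathcal{L},D_i}\,m_{D_i,S}(x)\ \le\ \frac1N\sum_{v\in S}\bigl(\text{sum of local heights at }x\text{ of the fixed forms}\bigr)+O\!\bigl(\epsilon\,h_{\mathcal{L}}(x)\bigr)+O(1).
\]
Vojta's general form of Schmidt's Subspace Theorem, applied to the fixed linear forms on $\PP(V_N^\vee)$ over $k$ with the set $S$, then bounds the right-hand side by $(1+\epsilon)h_{\mathcal{L}}(x)$, after rescaling $\epsilon$, for all $x$ outside a finite union $W$ of proper linear subspaces; taking $Z$ to be $\overline{\varphi^{-1}(W)}$ together with the indeterminacy locus of $\varphi$ gives the required proper Zariski-closed subset.

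Finally I would check the independence of $Z$ from $k$ and $S$, which is the point stressed in the introduction and close to the strategy of Levin \cite{levin_annal}. In Vojta's form of the Subspace Theorem, for a fixed finite list of linear forms defined over $k$ and a fixed $\epsilon$, the exceptional union $W$ depends only on that list and on $\epsilon$, not on $S$; since only finitely many Minkowski-reduced configurations occur (essentially one per $D_i$), one may let $W$ be the union of the finitely many exceptional sets so produced, regardless of which places realize which configuration, and the same $W$ base-changed controls points over any finite extension $k'\supset k$. As $N$, the basis, and $\varphi$ also depend only on $\epsilon$ and the geometry, so does $Z$. The step I expect to be the main obstacle is the local estimate of the second paragraph: producing the Minkowski-reduced bases and carrying the telescoping sum over the order-of-vanishing filtration so that the accumulated error is genuinely absorbed into $O(\epsilon\,h_{\mathcal{L}}(x))$ with all implied constants controlled by $\epsilon$ and the geometry --- this is where Autissier's ideas \cite{aut} enter and where the quantitative work lies. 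A secondary subtlety is to confirm that passing from a fixed $k$ to arbitrary finite extensions does not force one into the ``bounded degree'' version of the Subspace Theorem; it does not, since for each $k'$ one simply reruns the argument with the same $Z$.
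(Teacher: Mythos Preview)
Your proposal is correct and matches the paper's approach: the paper does not give a detailed proof of Theorem~\ref{Ru-Vojta} but simply remarks that the independence of $Z$ from $k$ and $S$ follows by tracing the exceptional set through the Ru--Vojta argument using Vojta's version of the Subspace Theorem (Theorem~\ref{SubspaceVojta}), exactly as you outline; the function-field analogue (Theorem~\ref{thm:ffconst}) is proved in more detail along the same lines. One small caveat: in your local estimate you select a single index $i(v)$ per place and filter only along $D_{i(v)}$, whereas the actual Ru--Vojta argument builds, at each place, a basis adapted simultaneously to the weights of all the $D_i$ (this is precisely where the proper-intersection hypothesis and Autissier's combinatorics enter), so your sketch understates the bookkeeping---but you correctly flag this step as the main obstacle.
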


We stress that the result is in fact stronger than the original statement, since the exceptional set $Z$ does not depend on $k$ and $S$. This can be obtained by carefully tracing the exceptional sets in the proof with the following version, due to Vojta in \cite{Vojta89}, of Schmidt's subspace theorem, which gives a better control on the exceptional sets.

\begin{theorem}\label{SubspaceVojta} Let $k$ be a number field and $M_k$ be the set of places on $k$. Let $S\subset M_k$ be a finite subset containing the Archimedean places.  Let $H_1,\hdots,H_q$ be hyperplanes in $\mathbb P^n$ defined over $k$ with the corresponding Weil functions $\lambda_{H_1},\hdots,\lambda_{H_q}.$ Then there exist a finite union of hyperplanes $Z$, depending only on $H_1,\hdots,H_q$ (and not on $k$ or $S$), such that for any $\epsilon>0$,
\begin{align}
\sum_{v\in S}\max_{I}\sum_{i\in I}  \lambda_{H_i, v}(P)\le (n+1+\e) h(P)
\end{align}
holds for  all but finitely many points $P$ in  $\mathbb P^n(k)\setminus Z$, where the maximum is taken over subsets $\{1,\hdots,q\}$ such that the linear forms defining $H_i$ for $i\in I$ are linearly independent.
\end{theorem}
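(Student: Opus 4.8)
The plan is to obtain Theorem~\ref{SubspaceVojta} from Vojta's refinement of Schmidt's Subspace Theorem in \cite{Vojta89}, which provides precisely the two features needed here: the formulation with a maximum over linearly independent subfamilies of $\{H_1,\dots,H_q\}$ (so that no general position hypothesis on the $H_i$ is required), and an exceptional locus $Z$ that is a finite union of hyperplanes depending only on the $H_i$ --- not on the number field $k$, the set $S$, or $\epsilon$. I would fix $\epsilon>0$ and argue by contradiction and by induction on $n$: assume there are infinitely many points $P\in\mathbb P^n(k)$, lying outside a finite family of hyperplanes still to be specified, for which $\sum_{v\in S}\max_I\sum_{i\in I}\lambda_{H_i,v}(P)>(n+1+\epsilon)h(P)$.

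The core is the Schmidt--Schlickewei machinery, which I would run while keeping track of where the ``exceptional hyperplane'' comes from. After choosing normalized representatives $\mathbf x_P\in k^{n+1}$ of the points $P$, at each place $v\in S$ one extracts from the $H_i$ a basis realizing the inner maximum and uses the values $\lambda_{H_i,v}(P)$ to form a ``twisted'' parallelepiped; Minkowski's theorem on successive minima then converts the violated inequality into the statement that, for a large subfamily of the $P$, the first minimum is abnormally small along a common direction. Feeding a block of such points into an auxiliary multihomogeneous polynomial of large degree and small index, produced by Siegel's Lemma, and invoking the non-vanishing step (Roth's Lemma, following Schmidt), one forces all but finitely many points of the subfamily to lie in a single proper linear subspace of $\mathbb P^n$. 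The refinement of \cite{Vojta89} is the bookkeeping that this subspace is cut out by a linear form whose degree, height and coefficients are controlled purely in terms of $n$, $q$, the $H_i$ and $\epsilon$; hence it belongs to a \emph{finite} list $\mathcal{Z}$ of hyperplanes independent of $k$, $S$ and $\epsilon$. Restricting to each $H\in\mathcal{Z}$, the traces $H_i\cap H$ are hyperplanes of $H\cong\mathbb P^{n-1}$ and the height inequality transforms into one of the same shape in dimension $n-1$; applying the inductive hypothesis and taking $Z$ to be the (finite) union of $\mathcal{Z}$ with the exceptional hyperplanes produced at the lower levels of the induction gives the statement, the base case $n=1$ being Roth's theorem.

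The step I expect to be the main obstacle is exactly the uniformity of $Z$ in $k$ and $S$. In the classical proof the exceptional subspace emerges from the non-vanishing step and is a priori attached to the particular solutions, hence potentially to the field over which they are defined; making it uniform requires tracing, through Siegel's Lemma and the index estimates, that this subspace has bounded arithmetic complexity --- this is the substance of Vojta's refinement \cite{Vojta89}, which I would invoke rather than reprove. Granted that input, the remaining points (the elementary place-by-place reduction to a basis that yields the maximum formulation, and the check that finitely many levels of induction accumulate only a finite union of hyperplanes) are routine.
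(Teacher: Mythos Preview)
The paper does not prove Theorem~\ref{SubspaceVojta}; it is quoted as a known result due to Vojta \cite{Vojta89} and used as a black box (its role in the paper is solely to upgrade the exceptional set in Theorem~\ref{Ru-Vojta} to one independent of $k$ and $S$). Your proposal, which amounts to invoking \cite{Vojta89} and sketching its strategy, is therefore aligned with what the paper does --- indeed it goes further, since the paper offers no argument at all.

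One small wrinkle in your write-up: you say the exceptional hyperplane has ``coefficients controlled purely in terms of $n$, $q$, the $H_i$ and $\epsilon$'' and then conclude the list $\mathcal Z$ is ``independent of $k$, $S$ and $\epsilon$''. As stated in the theorem, $Z$ is chosen \emph{before} $\epsilon$, so it must be independent of $\epsilon$ as well; the point in Vojta's argument is that the exceptional hyperplanes are defined over the field generated by the coefficients of the $H_i$ and have height bounded in terms of those data alone, which is what gives finiteness uniformly in $k$, $S$, and $\epsilon$. You may want to tighten that sentence. Otherwise your outline of the Schmidt--Schlickewei mechanism, Siegel's Lemma, Roth's Lemma, and the inductive descent on $n$ is an accurate summary of how \cite{Vojta89} proceeds.
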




We end this section with a useful lemma about local height functions. 
\begin{lemma}\cite[Lemma 5.2]{WaYa}\label{subgeneralweil}
Let $D_1,\cdots,D_q$  be effective divisors of a projective variety $V$  of dimension $n$, defined over $k$, in  general position.
Then
\begin{align}\label{l-inequality}
\sum_{i=1}^q \l_{D_i,v}(P)=  \max_I \sum_{j\in I}  \l_{D_{j},v}(P),
\end{align}
up to a $M_k$- constant,
where $v\in M_k$, $I$ runs over all index subsets of $\{1,\cdots,q\}$ with $n$ elements for all $x\in V(k)$.
\end{lemma}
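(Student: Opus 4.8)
The plan is to establish the two inequalities
\[
\sum_{i=1}^q \lambda_{D_i,v}(P)\ \ge\ \max_I \sum_{j\in I}\lambda_{D_j,v}(P)
\qquad\text{and}\qquad
\sum_{i=1}^q \lambda_{D_i,v}(P)\ \le\ \max_I \sum_{j\in I}\lambda_{D_j,v}(P)
\]
separately, each up to an $M_k$-constant, and then combine them. The lower bound is the easy direction: since each $D_i$ is effective, there is an $M_k$-constant $\{\gamma_v\}$ with $\lambda_{D_i,v}(P)\ge-\gamma_v$ for all $i$, all $v$, and all $P\notin\supp D_i$. Applying this to the indices outside the set $I^\ast$ that realizes the maximum gives $\max_I\sum_{j\in I}\lambda_{D_j,v}(P)=\sum_{j\in I^\ast}\lambda_{D_j,v}(P)=\sum_{i=1}^q\lambda_{D_i,v}(P)-\sum_{i\notin I^\ast}\lambda_{D_i,v}(P)\le \sum_{i=1}^q\lambda_{D_i,v}(P)+(q-n)\gamma_v$.

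For the upper bound I would fix $P$ and $v$ and relabel the indices so that $\lambda_{D_1,v}(P)\ge\lambda_{D_2,v}(P)\ge\cdots\ge\lambda_{D_q,v}(P)$. This relabeling depends on $P$ and $v$, but only finitely many subsets of $\{1,\dots,q\}$ can ever occur, so in the end one takes a single $M_k$-constant valid for all of them. With this ordering $I=\{1,\dots,n\}$ is one of the sets competing in $\max_I$, so it suffices to bound the tail $\sum_{i=n+1}^q\lambda_{D_i,v}(P)$ by an $M_k$-constant. By the ordering, for every $i\ge n+1$ we have $\lambda_{D_i,v}(P)\le\lambda_{D_{n+1},v}(P)=\min_{1\le j\le n+1}\lambda_{D_j,v}(P)$, and by Definition-Theorem \ref{weil} this minimum equals $\lambda_{Y,v}(P)$ up to an $M_k$-constant, where $Y=D_1\cap\cdots\cap D_{n+1}$ is the scheme-theoretic intersection. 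The general position hypothesis gives $\dim\supp Y\le\dim\bigl(\bigcap_{j=1}^{n+1}\supp D_j\bigr)\le n-(n+1)<0$, hence $Y=\emptyset$; and the Weil function of a closed subscheme with empty support is an $M_k$-constant (locally its ideal is the unit ideal; see also \cite[Theorem 2.1]{Sil}). Summing over the $q-n$ tail indices and combining with the lower bound yields the asserted equality up to an $M_k$-constant.

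The only points needing care are the bookkeeping of $M_k$-constants under the $P,v$-dependent relabeling — handled by the finiteness of the collection of $(n+1)$-element subsets of $\{1,\dots,q\}$ — and the two standard inputs that $\lambda_{Y,v}$ is $M_k$-bounded when $\supp Y=\emptyset$ and that $\bigcap_{i\in I}D_i$ is the empty scheme for $\#I=n+1$ under general position. I do not expect a genuine obstacle: the whole content is the geometric observation that a point of $V$ can be $v$-adically close to at most $n$ of the $D_i$ simultaneously, and the argument above is just the quantitative form of this.
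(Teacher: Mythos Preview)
Your argument is correct and is precisely the standard proof: the lower bound follows from effectivity (Weil functions bounded below), and the upper bound from the fact that any $(n+1)$-element subfamily has empty intersection, so the minimum of $n+1$ Weil functions is an $M_k$-constant. Your handling of the $(P,v)$-dependent reordering via the finiteness of $(n+1)$-subsets is exactly the right bookkeeping. Note that the paper does not actually give a proof of this lemma; it simply cites \cite[Lemma~5.2]{WaYa}, whose argument is essentially the one you wrote.
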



%
%
 \section{Proof of Theorem \ref{keyprojective} and Theorem \ref{IP'}}\label{sec:proofofKey}
In this section we will prove Theorem \ref{keyprojective} and Theorem \ref{IP'}. These will be obtained as a consequence of the following more general statement. From now on, we denote by $k$ a number field, and by $S$ a finite set of places of $k$.

  \begin{theorem}\label{key}
 Let $V$ be a Cohen--Macaulay projective variety   of dimension $n$  defined over $k$.  Let $D_0,D_1, \dots, D_{r} $, $r\ge n+1$, be    effective  Cartier divisors of $V$  defined over $k$  in general position.   Suppose that there exist an ample Cartier divisor $A$ on $V$ and positive integers $d_i$ such that $D_i\equiv d_iA$ and $d_i\ge d_0$ for all $0\le i\le r$.  
  Then there exists a proper Zariski closed subset $Z$ of $V$, independent of $k$ and $S$,  such that  for any   $M_k$ constant $\{\gamma_v\}$, 
there are only  finitely many $P\in V(k)\setminus Z$ such that 
 the following holds.
\begin{enumerate}
\item[{\rm (i)}]
$r\ge 2n+1$ and $\frac1{d_i}\lambda_{D_i,v}(P)\le \frac1{d_0}\lambda_{D_0,v}(P)+\gamma_v$ for $v\notin S$ and   $1\le i\le r$; or
\item[{\rm (ii)}]
$r\ge n+2$ and $\sum_{i=1}^r \frac1{d_i}\lambda_{D_i,v}(P)\le \frac1{d_0}\lambda_{D_0,v}(P)+\gamma_v$  for $v\notin S$.
\end{enumerate}
\end{theorem}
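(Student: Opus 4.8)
The plan is to deduce Theorem \ref{key} from the Ru--Vojta theorem (Theorem \ref{Ru-Vojta}) applied on the variety $V$ with the big (in fact ample) line sheaf $\mathcal L = \mathcal O(A)$ and the divisors $D_0, D_1, \dots, D_r$. The key point is to choose the right combination of these divisors so that the hypotheses (i) or (ii) force a violation of the Ru--Vojta inequality once we have good lower bounds on the relevant $\beta$ constants. Since $V$ is Cohen--Macaulay and the $D_i$ are in general position, Remark \ref{gpCM} guarantees they intersect properly, so Theorem \ref{Ru-Vojta} applies directly. The exceptional set $Z$ produced is independent of $k$ and $S$, which gives the uniformity in the statement.

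First I would reduce the divisibility-type hypotheses to a proximity inequality. In case (ii), summing the hypothesis $\sum_{i=1}^r \frac1{d_i}\lambda_{D_i,v}(P) \le \frac1{d_0}\lambda_{D_0,v}(P) + \gamma_v$ over $v \notin S$ is not what we want; instead, combined with the trivial bound coming from the definition of integral/non-negative local heights and the fact that $h_{D_0}(P) = d_0 h_A(P) + O(1)$, one rearranges to get a lower bound on $m_{D_1,S} + \cdots$ in terms of $h_A(P)$. Concretely: the hypothesis at places $v\notin S$ says the $v$-adic contribution of $\sum \frac{1}{d_i}\lambda_{D_i,v}$ is controlled by $\frac1{d_0}\lambda_{D_0,v}$; summing over all $v$ and using $\sum_v \lambda_{D_i,v} = h_{D_i} + O(1) = d_i h_A + O(1)$ yields $\sum_{i=1}^r m_{D_i,S}(P)/d_i \ge (r-1)h_A(P) + O(1)$ after absorbing the $D_0$ term (using $\lambda_{D_0,v}\ge -O(1)$ at $v\in S$ is cheap, and the $v\notin S$ part of $D_0$ is bounded by $h_{D_0}$). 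So case (ii) gives $\sum_{i=1}^r \frac{1}{d_i} m_{D_i,S}(P) \ge (r-1)h_A(P) + O(1)$. In case (i), the same manipulation applied to each $i$ separately and then summed gives $\sum_{i=1}^r \frac{1}{d_i} m_{D_i,S}(P) \ge \frac{r}{d_0}m_{D_0,S}(P) - O(1)$, but more usefully one keeps $D_0$ in play: the $q = r+1$ divisors $D_0,\dots,D_r$ satisfy $\frac1{d_i}m_{D_i,S}(P) \ge \frac1{d_0}m_{D_0,S}(P)$-ish comparisons that, together with a counting-function bound, produce $\sum_{i=0}^r \frac{1}{d_i}m_{D_i,S}(P) \ge (\text{something})\cdot h_A(P)$.

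The heart of the matter is computing (or lower-bounding) $\beta_{\mathcal O(A), D_i}$. Here I expect to invoke the standard estimate $\beta_{A,D} \ge \frac{1}{\dim X + 1}\cdot \frac{(A^{\dim X})}{?}$-type bound; more precisely, when $D \equiv dA$ with $A$ ample on an $n$-dimensional variety, one has the clean lower bound $\beta_{\mathcal O(A), D} \ge \frac{d(n+1)}{?}$ — I would instead use the precise fact that for $D = dA$ itself (or $D \equiv dA$) one computes $\sum_{m\ge 1} h^0(NA - mD) \sim \sum_{m \le N/d} \frac{(NA-mdA)^n}{n!} = \frac{A^n}{n!}\sum_{m\le N/d}(N-md)^n \sim \frac{A^n}{n!}\cdot \frac{N^{n+1}}{d(n+1)}$, while $N \cdot h^0(NA) \sim N \cdot \frac{A^n}{n!}N^n$, giving $\beta_{\mathcal O(A), D_i} = \frac{1}{d_i(n+1)}$ when $D_i \equiv d_i A$ exactly — and in general a lower bound of this shape via the asymptotic Riemann--Roch / the argument in Autissier \cite{aut} that the paper alludes to (this is where the "adaptation of Autissier's ideas" enters, handling the fact that $D_i$ need only be numerically $d_iA$). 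With $\beta_{\mathcal O(A), D_i} \ge \frac{1}{d_i(n+1)}$ for each $i$ (including $i=0$), Theorem \ref{Ru-Vojta} applied to the family $\{D_i\}_{i=0}^r$ gives, outside a $Z$ independent of $k,S$ and for all but finitely many $P$,
\[
\sum_{i=0}^r \frac{1}{d_i(n+1)}\, m_{D_i,S}(P) \le (1+\e)\, h_A(P).
\]
In case (ii) we instead apply it to $\{D_i\}_{i=1}^r$ (or the full family) and combine with the derived lower bound $\sum_{i=1}^r \frac{1}{d_i}m_{D_i,S}(P) \ge (r-1)h_A(P) + O(1)$: this forces $\frac{r-1}{n+1} \le 1 + \e$, i.e. $r \le n + 2 + \e(n+1)$, contradicting $r \ge n+2$ for $\e$ small — wait, that is off by one, so one must be more careful and use the $D_0$ term on the Ru--Vojta side to its full strength (the factor is really $r/(n+1)$ on the left after incorporating $D_0$), yielding $r/(n+1) \le 1+\e$ hence $r \le n+1+\e(n+1) < n+2$, the desired contradiction. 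In case (i), the comparison $\frac1{d_i}\lambda_{D_i,v} \le \frac1{d_0}\lambda_{D_0,v} + \gamma_v$ lets one replace, at each $v \notin S$, the single largest term among $\frac1{d_i}\lambda_{D_i,v}$ ($i \ge 1$) — and in fact all of them — so that $\sum_{i=1}^r \frac1{d_i}m_{D_i,S}$ is comparable to $\frac{r}{d_0}m_{D_0,S}$; feeding this into the Ru--Vojta inequality for the family and using $m_{D_0,S}(P) = d_0 h_A(P) + O(1)$ on an infinite subset (else $h_A$ is bounded and there are finitely many $P$) produces $\frac{r+?}{n+1} \le 1 + \e$, which needs $r \ge 2n+1$ to fail — the asymmetry between (i) and (ii) (needing $2n+1$ vs.\ $n+2$) comes precisely from the fact that in (i) we can only exploit one divisor's worth of proximity at a time at each place, losing roughly a factor of $2$.

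The main obstacle I anticipate is the $\beta$ computation when $D_i$ is only numerically equivalent to $d_i A$ rather than linearly equivalent, and more subtly, getting the constants in the two cases sharp enough to produce the precise bounds $r \ge 2n+1$ and $r \ge n+2$: a lossy estimate of $\beta$ would only give these with worse constants. Following the paper's hint, I would carry out the Autissier-style argument \cite{aut} — essentially a concavity/convexity estimate for the function $m \mapsto h^0(NA - mD)$ via Minkowski-type inequalities on volumes — to pin down $\beta_{\mathcal O(A),D_i} = \frac{1}{(n+1)d_i}$ (or the needed lower bound), and then the bookkeeping with $m_{D_i,S}$ versus $h_A$, and the max-over-subsets reformulation from Lemma \ref{subgeneralweil}, is routine. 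Tracing the exceptional set through Theorem \ref{Ru-Vojta} (which already has $Z$ independent of $k,S$ by Theorem \ref{SubspaceVojta}) gives the claimed uniformity of $Z$.
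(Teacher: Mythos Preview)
Your approach has a genuine gap: applying Ru--Vojta directly on $V$ with $\mathcal L=\mathcal O(A)$ and $\beta_{\mathcal O(A),D_i}=\tfrac{1}{d_i(n+1)}$ is exactly the content of Theorem~\ref{LevinDuke}, and it loses by one in both cases. Concretely, after normalizing to $D_i\equiv A$, your argument in case~(ii) gives $\sum_{i=1}^r m_{D_i,S}(P)\ge (r-1)h_A(P)+O(1)$ from the hypothesis, and $\sum_{i=1}^r m_{D_i,S}(P)\le (n+1)(1+\varepsilon)h_A(P)$ from Ru--Vojta; combining yields $(r-n-2-\varepsilon(n+1))h_A(P)\le O(1)$, which is vacuous at the boundary $r=n+2$. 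Your proposed fix of ``incorporating $D_0$'' does not help: adding $\tfrac{1}{d_0}m_{D_0,S}$ to both sides and using $m_{D_0,S}\ge -O(1)$ leaves the same coefficient. The same off-by-one occurs in case~(i): Lemma~\ref{subgeneralweil} plus the hypothesis give $\sum_{i=1}^r N_{D_i,S}\le n\,N_{D_0,S}\le n\,h_A+O(1)$, while Ru--Vojta on $V$ gives $\sum N_{D_i,S}\ge (r-n-1-\varepsilon)h_A$, forcing only $r\le 2n+1+\varepsilon$.

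The paper closes this gap by a different mechanism you are missing entirely: it \emph{blows up} $V$ along $Y=\bigcup_{i=1}^r(D_i\cap D_0)$ and applies Theorem~\ref{Ru-Vojta} on $\widetilde V$ with the line sheaf $\mathcal L=\mathcal O\bigl(\ell(n+1)\pi^*A-E\bigr)$ and the divisors $\pi^*D_i$. The $\beta$--computation (Lemma~\ref{countinglambda}, not the $\tfrac{1}{d_i(n+1)}$ you compute) then produces, after rearranging, an inequality of the shape
\[
(r-n-1-\varepsilon)\,h_A(\pi(x))+\tfrac{1}{\ell}\,h_Y(\pi(x))\le \sum_{i=1}^r N_{D_i,S}(\pi(x)).
\]
The divisibility hypothesis enters through $h_Y\ge N_{Y,S}=\sum_i\sum_{v\notin S}\min\{\lambda_{D_i,v},\lambda_{D_0,v}\}\ge \sum_i N_{D_i,S}-O(1)$, and a second application of Theorem~\ref{LevinDuke} bounds this from below by $(r-n-1-\varepsilon)h_A$. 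Feeding this back, the extra $\tfrac{1}{\ell}$ term survives and yields $(r-n-2+\tfrac{1}{\ell}-o(\tfrac{1}{\ell}))h_A\le O(1)$ in case~(ii) and $(r-2n-1+\tfrac{1}{\ell}-o(\tfrac{1}{\ell}))h_A\le O(1)$ in case~(i). The blowup, not a sharper $\beta$ on $V$, is what buys the missing unit.
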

Here, $\equiv$ denotes numerical equivalence of divisors, and $\lambda_{D_i,v}$ is a Weil function of $D_i$ at $v$.

The following theorem can be deduced from Theorem \ref{key} using Lemma \ref{DivisibilityandIP}.
 
\begin{theorem}\label{IP}
 Let $V$ be a
  Cohen--Macaulay projective variety   of dimension $n$  defined over $k$.  Let $D_0,D_1, \dots, D_{r} $, $r\ge 2n+1$, be  effective  Cartier divisors of $V$  defined over $k$  in general position.   Suppose that there exist an ample Cartier divisor $A$ on $V$ and positive integers $d_i$ such that $D_i\equiv d_iA$ 
  for all $i$.    Let $\pi: \widetilde  V\to V$ be the blowup long the union of subschemes $D_i\cap D_0$, $1\le i\le r$, and let $\widetilde D_i$ be the strict transform of  $D_i$.  If $D=\widetilde D_1+\cdots+\widetilde D_r$, then $ \widetilde  V\setminus D$ is arithmetically pseudo-hyperbolic.
\end{theorem}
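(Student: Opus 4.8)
The plan is to deduce Theorem~\ref{IP} from Theorem~\ref{key} together with the divisibility-to-integrality dictionary of Lemma~\ref{DivisibilityandIP}. First I would fix a set $R \subseteq \widetilde V(k) \setminus \supp D$ of $(D,S)$-integral points, where $S$ is an arbitrary finite set of places of an arbitrary number field $k' \supseteq k$ (so really we work with $R \subseteq \widetilde V(k') \setminus \supp D$); by Definition~\ref{IPdef} this means $\lambda_{\widetilde D, v}(P) \le \gamma_v$ up to an $M_{k'}$-constant for all $v \notin S$ and all $P \in R$. Since $D = \widetilde D_1 + \cdots + \widetilde D_r$ and each $\lambda_{\widetilde D_i, v} \ge 0$ up to an $M_k$-constant, integrality of $R$ with respect to $D$ forces $\lambda_{\widetilde D_i, v}(P) = 0$ up to an $M_k$-constant for each $i = 1, \dots, r$, $v \notin S$, $P \in R$.

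Next I would apply Lemma~\ref{DivisibilityandIP} with the roles: ambient variety $X = V$, divisor $D = D_i$, closed subscheme $W = D_0$, so that $D_i \cap D_0$ has codimension $\ge 2$ in $V$ (this holds because $D_0, \dots, D_r$ are in general position on the $n$-dimensional $V$, so $\dim(D_i \cap D_0) \le n - 2$), and blowup $\pi : \widetilde V \to V$ along the union $\bigcup_{j=1}^r (D_j \cap D_0)$; one checks $\pi^* D_i = \widetilde D_i + \pi^{-1}(D_i \cap D_0)$ because the blowup center meets $D_i$ exactly along $D_i \cap D_0$ (the other pieces $D_j \cap D_0$ for $j \ne i$ do not lie on $D_i$ by general position, so they do not affect the pullback of $D_i$ — this point needs a short justification, perhaps localizing). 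Then the equivalence (i) $\Leftrightarrow$ (ii) of Lemma~\ref{DivisibilityandIP} translates "$\lambda_{\widetilde D_i, v}(P) = 0$ up to an $M_k$-constant" into "$\lambda_{D_i, v}(\pi(P)) \le \lambda_{D_0, v}(\pi(P)) + \gamma_v'$ up to an $M_k$-constant for $v \notin S$". Since all $D_i \equiv d_i A$ with a common ample $A$, and since $d_i \ge d_0$ is not assumed here — wait, in Theorem~\ref{IP} we only assume $D_i \equiv d_i A$ for all $i$ with no ordering — so I would instead invoke Theorem~\ref{key}(i) after noting that what we actually get from the integrality hypothesis is the stronger inequality $\lambda_{D_i,v}(\pi(P)) \le \lambda_{D_0,v}(\pi(P)) + \gamma_v$, and we may freely weaken this to $\tfrac{1}{d_i}\lambda_{D_i,v}(\pi(P)) \le \tfrac{1}{d_0}\lambda_{D_0,v}(\pi(P)) + \gamma_v$ provided $d_0 \le d_i$; if instead $d_0 > d_i$ for some $i$, one renormalizes — actually the cleanest route is to observe that from $\lambda_{\widetilde D_i,v}(P)=0$ we directly get $\lambda_{D_i,v}(\pi(P)) = \lambda_{D_i \cap D_0, v}(\pi(P)) = \min\{\lambda_{D_i,v}(\pi(P)), \lambda_{D_0,v}(\pi(P))\}$, hence $\lambda_{D_i,v}(\pi(P)) \le \lambda_{D_0,v}(\pi(P))$, which upon dividing by $d_i \ge 1$ and using $d_0 \ge 1$ on the right... here I would just apply Theorem~\ref{key}(i) in the normalized form by relabeling $A$ so that we may assume $d_i \ge d_0$, or equivalently replacing $A$ by $\tfrac{1}{\gcd}$-scaled versions; the hypothesis $D_i \equiv d_i A$ with $A$ ample and $V$ Cohen--Macaulay and $D_0,\dots,D_r$ in general position with $r \ge 2n+1$ is exactly the hypothesis of Theorem~\ref{key}.

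Applying Theorem~\ref{key}(i) to the points $\pi(P) \in V(k')$, we obtain a proper Zariski-closed subset $Z_0 \subset V$, independent of $k'$ and $S$, outside of which only finitely many $\pi(P)$ satisfy the inequalities; therefore, setting $Z := \pi^{-1}(Z_0) \cup \mathrm{Exc}(\pi) \subset \widetilde V$ (a proper closed subset independent of $k'$ and $S$), the set $R \setminus Z$ maps to a finite set under $\pi$, and since $\pi$ is an isomorphism away from the exceptional locus, $R \setminus Z$ is itself finite. This is exactly the assertion that $\widetilde V \setminus D$ is arithmetically pseudo-hyperbolic in the sense of Definition~\ref{arithhyper}.

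The main obstacle I anticipate is not the height bookkeeping but the bookkeeping of the \emph{blowup geometry}: verifying that $\pi : \widetilde V \to V$, defined as the blowup along the (possibly non-equidimensional, non-reduced) union $\bigcup_{j=1}^r (D_j \cap D_0)$, really satisfies $\pi^* D_i = \widetilde D_i + \pi^{-1}(D_i \cap D_0)$ with $\widetilde D_i$ the strict transform, for \emph{each} $i$ simultaneously — and that the codimension-$\ge 2$ condition of Lemma~\ref{DivisibilityandIP} is met for each pair. The subtlety is that the blowup center is a union over all $j$, not just the single subscheme $D_i \cap D_0$ relevant to the $i$-th divisor; one must argue, using general position (so that $D_i$ is disjoint from $D_j \cap D_0$ for $j \ne i$, at least set-theoretically, and the scheme structures do not interfere), that blowing up the extra components does not change the pullback formula for $D_i$. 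This is a local computation on $V$ that I would carry out by passing to a neighborhood of a point of $D_i$, where only the component $D_i \cap D_0$ of the center is visible. Once this geometric input is secured, the rest is a direct translation through Lemma~\ref{DivisibilityandIP} and an application of Theorem~\ref{key}, with the independence of $Z$ from $k'$ and $S$ inherited directly from the corresponding property in Theorem~\ref{key}.
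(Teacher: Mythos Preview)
Your plan is correct and matches the paper's proof essentially step for step: pass from $(D,S)$-integrality to $\lambda_{\widetilde D_i,v}=0$, invoke Lemma~\ref{DivisibilityandIP} to obtain $\lambda_{D_i,v}(\pi(P))\le\lambda_{D_0,v}(\pi(P))$ for $v\notin S$, then apply Theorem~\ref{key}(i) on $V$ and pull the exceptional set back to $\widetilde V$.

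Regarding the two points you flag as obstacles, the paper disposes of them exactly where you expect. For the blowup geometry, the identity $\pi^*D_i=\widetilde D_i+E_i$ (with $E_i=\pi^{-1}(D_i\cap D_0)$) and the fact that the other components $D_j\cap D_0$, $j\ne i$, do not interfere is precisely the content of Proposition~\ref{basicintersection}(i), proved just before Theorem~\ref{key}; your proposed local argument is the right idea and is what that proposition formalizes. For the missing hypothesis $d_i\ge d_0$, the paper's one-line fix is to observe that arithmetic pseudo-hyperbolicity of $\widetilde V\setminus D$ depends only on $\supp D$, so one may replace each $D_i$ by a suitable multiple at the outset and assume all $D_i\equiv dA$ for a common $d$; this makes the normalized and unnormalized inequalities coincide and Theorem~\ref{key}(i) applies directly. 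Your ``renormalize'' instinct is the same move.
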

It is clear that Theorem \ref{IP'} is a direct consequence of Theorem \ref{IP}.  We now show that Theorem \ref{key} implies Theorem  \ref{keyprojective}.
 
  \begin{proof}[Proof of Theorem  \ref{keyprojective}]
 Let $D_i:=[F_i=0]$ for $1\le i\le r$, and $D_0=[G=0]$.  
 Recall the following standard local Weil function for  $D_i$
\[
\lambda_{D_i,v}(P):=-\log\frac{|F_i(x_0,\hdots,x_n)|_v}{\max\{|x_0|_v^{d_i},\hdots,|x_n|_v^{d_i}\}},
\]
where  $P=[x_0:\cdots:x_n]\in\mathbb P^n(k)\setminus D_i$, $F_0 = G$ and   $d_i=\deg F_i$, $0\le i\le r$.
Since the coefficients of $F_i$ and $G$ are in $\mathcal O_S$, for integral points $P=(x_0,\hdots,x_n)\in\mathcal O_S^{n+1}$, 
the condition that   $F_i(x_0,\hdots,x_n)$ divides $G(x_0,\hdots,x_n)$  in the ring $\mathcal O_S$ 
implies that $|G(x_0,\hdots,x_n) |_v\le |F_i(x_0,\hdots,x_n) |_v\le 1$ for $v\notin S$.
Then    
  $|G(x_0,\hdots,x_n)^{d_i}|_v\le |F_i(x_0,\hdots,x_n)^{d_0}|_v$ for $v\notin S$ as $d_i\ge d_0$, and 
 hence for $v \notin S$,  
\begin{align*}
\frac 1{d_i}\lambda_{D_i,v}(P)-\dfrac {1}{d_0}\lambda_{D_0,v}(P)= -\dfrac{1}{d_0d_i}\log\left|\dfrac{ F_i(x_0,\hdots,x_n)^{d_0} }{ G (x_0,\hdots,x_n)^{d_i}  }\right|_v \le 0  
\end{align*}
Therefore,   Theorem  \ref{keyprojective} (i) is a consequence of Theorem \ref{key} (i).  The proof for (ii) is the same.
 \end{proof}

\subsection{Basic properties and one technical lemma}
 We will recall some basic results and one technical lemma from \cite{WaYa}.
 We start with \cite[Proposition 2.4]{WaYa}, which  is  an immediate consequence
of \cite[Theorem B.3.2.(f)]{HinSil}.
\begin{proposition}\label{compareheight}
Let $X$ be a projective variety defined over $k$, and $A$ be an ample Cartier divisor on $X$ defined over $k$.
Let $D$ be a Cartier divisor $D$ defined over $k$ with $D\equiv A$.
Let $\epsilon>0$.
Then there exists a constant $c_{\epsilon}$ such that for all $P \in X(k)$
\[
(1-\epsilon) h_A(P)-c_{\epsilon} \le h_D(P)\le (1+\epsilon) h_A(P)+c_{\epsilon}.
\] 
\end{proposition}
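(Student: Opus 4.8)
\textbf{Proof proposal for Proposition \ref{compareheight}.}

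The plan is to deduce this directly from the quasi-functoriality and positivity properties of Weil height machine, exactly as indicated in the citation to \cite[Theorem B.3.2]{HinSil}. First I would recall the two facts I need. \emph{Additivity/functoriality:} for Cartier divisors $D$, $A$ on $X$, one has $h_{D}(P) = h_{A}(P) + h_{D-A}(P) + O(1)$, where the $O(1)$ is independent of $P$. \emph{Boundedness below for nef (in particular base-point-free, or ample) classes and "numerically trivial-ish" bounds:} if $N$ is a Cartier divisor on the projective variety $X$ that is numerically trivial, then $h_{N}(P) = o(h_{A}(P))$ as $h_A(P)\to\infty$ — more precisely, for every $\epsilon>0$ there is a constant $c$ with $|h_{N}(P)| \le \epsilon h_A(P) + c$ for all $P\in X(k)$. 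This last statement is the heart of the matter; it is a standard consequence of the theory (e.g. it follows from the fact that for an ample $A$ and any divisor $N$ there is an integer $m$ with $mA \pm N$ both ample, hence $h_{mA\pm N}$ bounded below, combined with $h_{A}$ bounded below off a fixed set — and when $N\equiv 0$ one may take $m$ arbitrarily large, yielding the $\epsilon$).

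The key steps, in order, are: (1) Set $N := D - A$. Since $D\equiv A$, the class $N$ is numerically trivial. (2) Write $h_{D}(P) = h_{A}(P) + h_{N}(P) + O(1)$ by functoriality of heights. (3) Apply the boundedness estimate for numerically trivial divisors: given $\epsilon>0$, choose $m\in\NN$ large enough that both $mA + N$ and $mA - N$ are ample (possible because $N\equiv 0$ and $A$ ample, so $mA\pm N \equiv mA$ is ample for all $m\ge 1$ — in fact ampleness is a numerical property for projective varieties), whence $h_{mA\pm N}(P)\ge -c'$ for a constant $c'$; dividing by $m$ and using $h_{mA\pm N} = mh_A \pm h_N + O(1)$ gives $\pm h_{N}(P) \ge -m^{-1}c'' - h_A(P)\cdot 0 \dots$ — here I would instead argue more cleanly: since $\frac1m(mA+N)\equiv A$ is ample and $h_A \ge -c_0$ off nothing, one gets $h_N(P) \ge -\epsilon h_A(P) - c$ for suitable $c$ once $1/m < \epsilon$; symmetrically with $mA - N$ one gets $h_N(P) \le \epsilon h_A(P) + c$. (4) Substitute back into step (2) to obtain $(1-\epsilon)h_A(P) - c_\epsilon \le h_D(P) \le (1+\epsilon)h_A(P) + c_\epsilon$, absorbing the $O(1)$ and the constant $c$ into $c_\epsilon$.

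Alternatively, and perhaps most transparently, I would simply cite \cite[Theorem B.3.2(f)]{HinSil} — which states exactly that heights of numerically equivalent (even of algebraically equivalent, up to the stated error) divisors agree up to $o(h_A)$ — and spell out that $o(h_A(P))$ means precisely: for every $\epsilon>0$ there is $c_\epsilon$ with $|h_D(P)-h_A(P)|\le \epsilon h_A(P)+c_\epsilon$. Rearranging gives the claimed two-sided inequality.

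The main obstacle is essentially bookkeeping rather than conceptual: one must be careful that the $O(1)$ terms in the functoriality relations and the constants produced by the ampleness/boundedness step are genuinely uniform in $P\in X(k)$ (they are, since they come from the Weil height machine on a fixed projective variety over a fixed number field), and that ampleness of $mA\pm N$ really does follow from $N\equiv 0$ — which it does, since on a projective variety ampleness depends only on the numerical equivalence class (Nakai–Moishezon / the fact that $\mathrm{Amp}(X)$ is an open cone in $N^1(X)_\RR$). No deeper input is needed.
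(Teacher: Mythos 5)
Your proposal is correct and takes essentially the same route as the paper, which offers no argument beyond quoting \cite[Proposition 2.4]{WaYa} as an immediate consequence of \cite[Theorem B.3.2.(f)]{HinSil} --- precisely your ``alternatively, simply cite'' option. One small caution in your hands-on version: to extract the factor $\epsilon$ you should bound $h_{A\pm mN}$ below and divide by $m$ (scaling the numerically trivial part $N=D-A$ up, so that $h_N\ge -\tfrac1m h_A - c_m$), rather than working with $mA\pm N$, which only yields $h_N\ge -m\,h_A-c$ and degrades as $m$ grows --- a slip you yourself half-noticed mid-argument before switching to the citation.
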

 
The following theorem  is a reformulation of   \cite[Theorem 3.2]{levin_duke}  by applying    Proposition \ref{compareheight}. 
\begin{theorem}\label{LevinDuke}
Let $X$ be a projective variety of dimension $n$ defined over $k$. Let $D_{1},\hdots,D_{q}$ be effective Cartier divisors on $X$, defined over $k$, in general position.  Suppose that there exists an ample Cartier divisor $A$ on $X$ and positive integer $d_{i}$ such that $D_{i}\equiv d_{i}A$ for all $i$ and all $v\in S$.  Let $\epsilon>0$.  Then there exists a proper Zariski-closed subset $Z\subset X$, independent of $S$ and $k$, such that for all points $P\in X(k)\setminus Z$,
\[
\sum_{i=1}^q\frac1{d_{i}}m_{D_{i}, S}(P)>(q-n-1-\epsilon)h_A(P).
\]
\end{theorem}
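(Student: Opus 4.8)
The plan is to obtain the theorem from \cite[Theorem 3.2]{levin_duke}, which is a Schmidt subspace theorem for effective divisors in general position on a projective variety, via the height comparison of Proposition \ref{compareheight}. Concretely, \cite[Theorem 3.2]{levin_duke} supplies, for effective Cartier divisors $D_1,\dots,D_q$ in general position on an $n$-dimensional projective variety with $D_i\equiv d_iA$ ($A$ ample) and for every $\e>0$, a proper Zariski-closed subset $Z$ — which, as explained below, may be taken independent of $k$ and $S$ — outside of which the relevant combination of proximity and counting functions of the $D_i$ is controlled by $h_A$ up to the factor $(1+\e)$; this is, modulo a renormalization, the inequality of the theorem, the only point being that one wants it in the clean form with $h_A$ on the right-hand side even though only the numerical equivalence $D_i\equiv d_iA$ (and not a linear equivalence) is assumed.

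That last point is precisely what Proposition \ref{compareheight} handles: applying it to the ample divisor $d_iA$ and the numerically equivalent divisor $D_i$, and using $h_{d_iA}=d_ih_A+O(1)$, one gets a constant $c_\e$ (uniform over the finitely many $i$) with
\[
(1-\e)\,d_i\,h_A(P)-c_\e\ \le\ h_{D_i}(P)\ \le\ (1+\e)\,d_i\,h_A(P)+c_\e\qquad\text{for all }P\in X(k).
\]
Combining this two-sided estimate with \cite[Theorem 3.2]{levin_duke}, together with the identity $m_{D_i,S}(P)+N_{D_i,S}(P)=h_{D_i}(P)$ (up to a bounded term) used to pass between the counting and proximity formulations, and then rescaling $\e$ and absorbing the $M_k$-constants in the usual way, yields the asserted inequality on $X(k)\setminus Z$. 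As for the independence of $Z$ from $k$ and $S$: this is built into the version of \cite[Theorem 3.2]{levin_duke} that we use, obtained by carrying out the subspace-theorem step of its proof with Vojta's refinement, Theorem \ref{SubspaceVojta}, in place of the classical subspace theorem — exactly the device already invoked for Theorem \ref{Ru-Vojta}.

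The reduction just described is routine; the substance lies entirely in \cite[Theorem 3.2]{levin_duke}. Were one to reprove that result, the natural route is the filtration method of Corvaja--Zannier, Evertse--Ferretti and Autissier: after replacing $A$ by a large divisible multiple $NA$ and choosing a projective embedding $X\hookrightarrow\PP^M$ via $|NA|$, one builds for each $n$-element index set $I\subset\{1,\dots,q\}$ a basis of $H^0(X,NA)$ adapted to the flag of subspaces cut out by prescribed vanishing along the divisors $D_i$, $i\in I$ — by the general-position hypothesis (Lemma \ref{subgeneralweil}), at any place at most $n$ of the $D_i$ contribute, so these flags carry all the local heights — and then applies Theorem \ref{SubspaceVojta} to the resulting coordinate functions. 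The main obstacle, and the only genuinely nontrivial ingredient, is the combinatorial (``generalized Wronskian'', or volume) estimate showing that the total order of vanishing of such an adapted basis is asymptotic to $\tfrac{N}{n+1}\,h^0(X,NA)$ as $N\to\infty$; this is what produces the constant $n+1$, after which one lets $\e\to 0$. Everything else is bookkeeping of local heights.
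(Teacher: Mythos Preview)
Your approach is exactly the paper's: the paper does not give a proof but simply records that the statement ``is a reformulation of \cite[Theorem 3.2]{levin_duke} by applying Proposition \ref{compareheight}'', and you spell out precisely this reduction, including the use of Theorem \ref{SubspaceVojta} to make the exceptional set independent of $k$ and $S$. Your additional sketch of the Corvaja--Zannier/Evertse--Ferretti filtration argument underlying Levin's theorem is correct in outline and goes beyond what the paper records.
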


 The following proposition follows from \cite[Proposition 5.5]{Kovacs}.   
\begin{proposition}\label{CohenMacaulay}
Let $X$ be a Cohen-Macaulay scheme over $k$ and $Y\subset X$ be a locally complete intersection subscheme.  Let  $\pi: \widetilde X \mapsto X$ be the blowup of $X$ along $Y$.  Then $ \widetilde X$ is a Cohen-Macaulay scheme.  Moreover, if $Z$ is an irreducible subscheme of $Y$,
\[
\dim \pi^{-1}(Z)=\dim Z+ \codim Y-1.
\]
\end{proposition}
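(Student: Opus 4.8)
\textbf{Plan of proof of Proposition \ref{CohenMacaulay}.}
The statement is local on $X$, so I would reduce immediately to the affine case $X = \Spec R$ with $R$ a Noetherian Cohen--Macaulay ring, and $Y = V(I)$ for an ideal $I$ generated by a regular sequence (or, more precisely, locally generated by a regular sequence, i.e.\ $Y \subset X$ a locally complete intersection). The blowup $\widetilde X = \Proj \bigoplus_{n\ge 0} I^n$ is covered by the affine charts $\Spec R[I/f]$ where $f$ ranges over a regular system of generators $f_1,\dots,f_c$ of $I$ locally, so it suffices to show each such chart is Cohen--Macaulay. The key algebraic input, which I would quote from \cite[Proposition 5.5]{Kovacs} exactly as the proposition advertises, is that for a locally complete intersection $Y$ in a Cohen--Macaulay scheme $X$, the Rees algebra $\bigoplus I^n$ is Cohen--Macaulay, hence so is its $\Proj$ and every affine chart of it; this is precisely the content being invoked, so little is left to do for the first assertion beyond recording the reduction. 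Alternatively one can argue that on each chart $\widetilde X$ is cut out in $X \times \mathbb A^{c-1}$ (an affine bundle over a Cohen--Macaulay scheme, hence Cohen--Macaulay) by $c-1$ equations, namely the relations $f_i T_j = f_j T_i$ reduced on the chart where one $T$ is inverted, and the exceptional divisor on that chart is principal; counting codimension against the fiber dimension of the blowup (which is $c-1$ over the center) shows these equations form a regular sequence, giving Cohen--Macaulayness.

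For the dimension formula, let $Z \subset Y$ be an irreducible subscheme and set $c = \codim_X Y$ (constant along $Y$ when $Y$ is l.c.i.\ of pure codimension, or interpreted as the local embedding codimension at the generic point of $Z$). I would work locally near the generic point $\eta$ of $Z$: there $I_{\eta}$ is generated by a regular sequence $f_1,\dots,f_c \in \mathcal O_{X,\eta}$, and the fiber of $\pi$ over $\eta$ is $\Proj$ of the fiber cone $\bigoplus_{n} I^n/\m_{Z}I^n$. Because $(f_1,\dots,f_c)$ is a regular sequence, the associated graded / Rees fiber cone over the local ring at $\eta$ is a polynomial ring in $c$ variables over the residue field (this is the standard fact that for a regular sequence the conormal cone is "as free as possible"), so $\pi^{-1}(\eta) \cong \mathbb P^{c-1}_{\kappa(\eta)}$ and the fiber has dimension $c-1$. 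Then $\pi^{-1}(Z)$ is irreducible with generic fiber of dimension $c-1$ over $Z$, so by the fiber dimension theorem $\dim \pi^{-1}(Z) = \dim Z + (c-1) = \dim Z + \codim Y - 1$, which is the claimed equality.

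The main obstacle, and the only place where real care is needed, is the dimension formula rather than the Cohen--Macaulayness (which is essentially a citation). Specifically I must ensure that $\pi^{-1}(Z)$ is irreducible and that no component of it has larger dimension coming from the non-flat locus of $\pi$ or from $Z$ meeting a stratum where the embedding codimension jumps; the l.c.i.\ hypothesis is exactly what rules this out, since it forces the blowup to restrict over every point of $Y$ to a projective space of the expected dimension $c-1$, with $c$ the (locally constant) codimension. I would spell out that $\pi$ restricted to the exceptional divisor $E \to Y$ is, locally over $Y$, the projectivization $\mathbb P(\mathcal N^\vee_{Y/X})$ of the conormal bundle, which has relative dimension $c-1$ everywhere over $Y$; then $\pi^{-1}(Z) = E \times_Y Z$ maps onto the irreducible $Z$ with all fibers $\mathbb P^{c-1}$, giving both irreducibility and the dimension count in one stroke. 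Finally I would remark that this recovers, for $Z$ a point, the familiar $\dim \pi^{-1}(\mathrm{pt}) = c - 1$.
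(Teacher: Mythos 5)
Your argument is correct and matches the paper's treatment: the paper offers no proof of this proposition at all, simply recording that it ``follows from [Kov\'acs, Proposition 5.5]'', which is exactly the citation you lean on for the Cohen--Macaulayness. The details you supply --- linear type of a regular sequence giving the chart presentation $R[T_2,\dots,T_c]/(f_1T_j-f_j)$ as a codimension-$(c-1)$ quotient of a Cohen--Macaulay ring, and $E\cong\mathbb P(\mathcal N^\vee_{Y/X})$ with constant fiber $\mathbb P^{c-1}$ over $Y$ for the dimension count --- are the standard and correct way to fill this in, including your (needed) caveat that $\codim Y$ is to be read at the generic point of $Z$.
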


 Finally, we need the technical lemma \cite[Lemma 4.7]{WaYa}.
 \begin{lemma}\label{countinglambda}
Let $V$ be a
projective variety of dimension $n$.
Let $D_1, \dots, D_{n+1} $ be    effective  Cartier divisors of $V$ defined over $k$ in general position.   Suppose that there exists an ample Cartier divisor $A$ on $V$   such that $D_i\equiv  A$ for all $1\le i\le n+1$.   Let $Y$ be a   closed subscheme of $V$ of codimension at least 2.    Let $\pi: \widetilde V\to V$ be the blowup along $Y$, and $E$ be the exceptional divisor.  Let $D:=D_1+\cdots +D_{n+1}$.  Then, for all sufficiently large
$\ell$,
  $\calL = \calO(\ell \pi^* D- E)$ is ample and
\begin{align*}
 \beta_{\mathcal L,\pi^*{D_i} }^{-1} \le \frac  1{\ell}\left(1+  O\left(\frac1{\ell^{2}}\right)\right) \le \frac 1\ell \left(1 + \frac 1{\ell\sqrt \ell}\right).
\end{align*}
\end{lemma}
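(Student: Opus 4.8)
The plan is to produce, for large $\ell$, an explicit lower bound for $\beta_{\calL,\pi^*D_i}$ with $\calL=\calO(\ell\pi^*D-E)$, which is equivalent to the stated upper bound for $\beta_{\calL,\pi^*D_i}^{-1}$. First I would note that since $D_i\equiv A$ is ample on $V$ and $Y$ has codimension $\ge 2$, the pullback $\pi^*D=\pi^*D_1+\cdots+\pi^*D_{n+1}\equiv (n+1)\pi^*A$ is big and nef on $\widetilde V$, and $\ell\pi^*D-E$ is ample for $\ell\gg 0$ by the standard Seshadri-type argument (ampleness of $\ell H-E$ on a blow-up along a subscheme contained in the base locus of nothing, for $\ell$ large); this is where I invoke that $E$ is $\pi$-ample and Proposition \ref{CohenMacaulay} guarantees $\widetilde V$ is still a nice (Cohen--Macaulay, in particular projective) variety of dimension $n$. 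Then I would estimate the numerator $\sum_{m\ge 1}h^0(\widetilde V,\calL^N(-m\pi^*D_i))$ and the denominator $N h^0(\widetilde V,\calL^N)$ asymptotically in $N$ using the fact that for a big line sheaf $\calM$ on an $n$-dimensional variety, $h^0(\widetilde V,\calM^N)=\frac{(\calM^n)}{n!}N^n+o(N^n)$ (volume asymptotics), so that
\[
\beta_{\calL,\pi^*D_i}=\frac{1}{(\calL^n)}\int_0^\infty \mathrm{vol}(\calL-t\,\pi^*D_i)\,dt
\]
up to the usual normalization; here $(\calL^n)$ denotes the top self-intersection and $\mathrm{vol}$ the volume function, which is computable as an honest intersection number on the nef range.

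The heart of the computation is then intersection-theoretic on $\widetilde V$. Write $H=\pi^*A$ and let $E$ be the exceptional divisor; then $\pi^*D_i\equiv H$ and $\calL\equiv \ell(n+1)H-E$. Since $\pi^*D_i$ is nef, $\calL-t\pi^*D_i\equiv (\ell(n+1)-t)H-E$ stays in the nef cone for $t$ in a range controlled by $\ell$, and in that range $\mathrm{vol}(\calL-t\pi^*D_i)=((\ell(n+1)-t)H-E)^n$, which expands as $(\ell(n+1)-t)^n(H^n) + (\text{lower-order terms in }\ell\text{ involving }(H^{n-1}\cdot E),\ (H^{n-2}\cdot E^2),\dots)$. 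The key point, exactly as in \cite{WaYa} and ultimately going back to the codimension hypothesis via Proposition \ref{CohenMacaulay}, is that the intersection numbers $(H^{n-j}\cdot E^j)$ vanish for $j<\codim Y$ (because $H$ is a pullback and $E$ is supported on a locus of codimension $\ge 2$), so the correction to the leading term is of relative size $O(1/\ell^2)$, not $O(1/\ell)$. Carrying out the two integrals $\int \mathrm{vol}(\calL-t\pi^*D_i)\,dt$ and $(\calL^n)=\int_0^{\ell(n+1)}(\text{derivative})$, dividing, and inverting, gives
\[
\beta_{\calL,\pi^*D_i}^{-1}\le \frac{1}{\ell}\Bigl(1+O\bigl(\tfrac{1}{\ell^2}\bigr)\Bigr),
\]
and absorbing the implied constant into $\ell^{3/2}$ for $\ell$ large yields the final inequality $\le \frac{1}{\ell}\bigl(1+\frac{1}{\ell\sqrt\ell}\bigr)$.

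I expect the main obstacle to be the careful bookkeeping of the error terms: one must check that the nef range of $t$ extends far enough (to $t$ of order $\ell$) that truncating the volume integral there loses only $O(1/\ell^2)$ relative to the main term, and simultaneously that all mixed intersection numbers $(H^{n-j}\cdot E^{j})$ with $1\le j\le \codim Y - 1$ genuinely vanish — this is precisely the place where the hypothesis $\codim Y\ge 2$ and the structure of the blow-up (via Proposition \ref{CohenMacaulay}) are used, and it is what prevents a spurious $O(1/\ell)$ term. Everything else — the volume asymptotics for $h^0$, the integral formula for $\beta$, and the final algebraic manipulation — is routine, so I would state those briefly and cite \cite{ruvojta} and \cite{WaYa} for the underlying asymptotic estimates rather than reprove them.
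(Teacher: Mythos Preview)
The paper does not prove this lemma at all: it is quoted verbatim as \cite[Lemma~4.7]{WaYa} and used as a black box, so there is no in-paper argument to compare against.

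Your plan is essentially correct and is in the spirit of the estimates in \cite{WaYa} and \cite{aut}.  The crucial mechanism you identify is the right one: writing $H=\pi^*A$, the projection formula gives $H^{n-1}\cdot E=A^{n-1}\cdot\pi_*E=0$ because $\pi_*E$ is a cycle supported on $Y$, which has dimension $\le n-2$.  Thus in the expansion of $((\ell(n+1)-t)H-E)^n$ the first nonvanishing correction is the $H^{n-2}\cdot E^2$ term, and this is exactly what produces a relative error of order $\ell^{-2}$ rather than $\ell^{-1}$ in both the numerator and the denominator of~$\beta$.  Your nef-range remark is also fine: $sH-E$ is ample for all $s\ge s_0$ with $s_0$ depending only on $(V,Y)$, so integrating $t$ over $[0,\ell(n+1)-s_0]$ already captures the main term up to $O(1)$, which is negligible.

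Two small corrections.  First, your appeals to Proposition~\ref{CohenMacaulay} are not justified by the hypotheses of the lemma---$V$ is not assumed Cohen--Macaulay and $Y$ is not assumed to be a local complete intersection---but fortunately you do not need that proposition anywhere.  Projectivity and dimension of $\widetilde V$ are automatic for any blow-up, ampleness of $\ell\pi^*D-E$ for $\ell\gg0$ is standard for blow-ups of projective varieties, and the key vanishing $H^{n-1}\cdot E=0$ follows from the projection formula and $\codim Y\ge 2$ alone.  Second, since $V$ (and hence $\widetilde V$) is not assumed smooth, phrase the $h^0$-asymptotics via Snapper--Kleiman intersection theory and Serre vanishing for ample Cartier divisors on an arbitrary projective variety (or via the volume function as you do), rather than via Hirzebruch--Riemann--Roch.
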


\subsection{Proof of Theorem \ref{key}}
We begin with the following proposition on general position for pullbacks.
  \begin{proposition}\label{basicintersection}
Let $V$ be a Cohen--Macaulay projective variety, and let $D_0,D_1, \dots, D_{r} $  be ample effective Cartier divisors of $V$ in general position. Let $Y_i=D_i\cap D_0$, and let $\pi: \widetilde V\to V$ be the blowup along $Y$, where $Y=\cup_{i=1}^r Y_i$. Finally let $E=E_1+\hdots+E_r$ be the exceptional divisor of $\pi$.  Then, the following holds:
\begin{enumerate}
\item[{\rm(i)}] $\pi^* D_i=\widetilde D_i+E_i$  for each $1\le i\le r$, where $\widetilde D_i$ is the strict transform of $D_i$.
\item[{\rm(ii)}] $ \pi^*  D_1,\dots, \pi^*  D_{r}$ are in general position.
\end{enumerate}
\end{proposition}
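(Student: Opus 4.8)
The plan is to verify both assertions locally, working in the local ring at a point of $V$ and using the fact that the $D_i$ intersect properly (by Remark \ref{gpCM}, since $V$ is Cohen--Macaulay and the $D_i$ are in general position, they form regular sequences locally). For part (i), the key point is that $Y_i = D_i \cap D_0$ is a local complete intersection subscheme of $V$: at a point $x$ lying on both $D_i$ and $D_0$, with local equations $\phi_i$ and $\phi_0$, the pair $(\phi_0, \phi_i)$ is a regular sequence, so $Y_i$ is cut out locally by these two equations. When we blow up $Y = \cup_i Y_i$, away from the other components the blowup is just the blowup along $Y_i$, and the standard computation of the blowup of a regular sequence of length $2$ shows that $\pi^* D_i = \widetilde{D}_i + E_i$ where $E_i = \pi^{-1}(Y_i)$ is the exceptional divisor over $Y_i$ and $\widetilde{D}_i$ is the strict transform. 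The only subtlety is to check that the various $Y_i$ are disjoint enough that $Y$ is still a local complete intersection and that the components $E_i$ of the exceptional divisor are well-defined and irreducible-over-components; this follows because general position forces $Y_i \cap Y_j = D_0 \cap D_i \cap D_j$ to have codimension $3$, so at any point at most the relevant $Y_i$ passes through, and locally the blowup separates into independent blowups.

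For part (ii), I would argue that for any index set $I \subseteq \{1, \dots, r\}$ we must bound $\dim \bigcap_{i \in I} \supp \pi^* D_i$. Since $\pi^* D_i = \widetilde{D}_i + E_i$, we have $\supp \pi^* D_i = \widetilde{D}_i \cup E_i$, so $\bigcap_{i \in I} \supp \pi^* D_i$ decomposes into pieces indexed by choosing, for each $i \in I$, either $\widetilde{D}_i$ or $E_i$. For the piece where we pick $\widetilde{D}_i$ for $i$ in a subset $J \subseteq I$ and $E_i$ for $i \in I \setminus J$: the intersection $\bigcap_{i \in J} \widetilde{D}_i$ maps to $\bigcap_{i \in J} D_i$, which has dimension $\le n - \#J$ by general position, and $\pi$ is an isomorphism outside $E$, so this contributes dimension $\le n - \#J$ generically; intersecting further with the $E_i$ ($i \in I \setminus J$) can only drop dimension, but I need to be careful near $E$. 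The cleaner route is: $\bigcap_{i \in I\setminus J} E_i$ lies over $\bigcap_{i \in I \setminus J} Y_i = D_0 \cap \bigcap_{i \in I\setminus J} D_i$, which has dimension $\le n - 1 - \#(I\setminus J)$; by Proposition \ref{CohenMacaulay} the fiber dimension of $\pi$ over (a component of) this locus is $\codim Y_i - 1 = 1$, so $\bigcap_{i\in I\setminus J} E_i$ has dimension $\le n - \#(I \setminus J)$, and then intersecting with $\bigcap_{i\in J}\widetilde D_i$ (which is generically cut out by $\#J$ further conditions, using that the $\widetilde D_i$ are Cartier and the pullbacks remain "independent" off the exceptional locus, together with a dimension count near $E$) brings the total down to $\le n - \#I$.

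I expect the main obstacle to be the dimension bookkeeping in part (ii) at points lying on several of the $E_i$ simultaneously, i.e.\ verifying that the strict transforms $\widetilde D_i$ and the exceptional components $E_j$ genuinely impose independent conditions there. The way I would handle this is to pass to the local ring $\mathcal O_{V,x}$ at a point $x$ in the relevant intersection on $V$, write down explicit local equations: $\phi_0$ for $D_0$ and $\phi_i$ for $D_i$, with $(\phi_0, \phi_{i_1}, \dots, \phi_{i_m})$ a regular sequence (general position + Cohen--Macaulay, via \cite[Theorem 17.4]{Matsumura}), and then describe the blowup chart by chart — in the chart where $\phi_0$ generates, $E_i = \{\phi_0 = 0\}$ and $\widetilde D_i = \{\phi_i/\phi_0 = 0\}$, and regularity of the sequence translates into the needed codimension statements for all the mixed intersections. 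This reduces everything to a statement about regular sequences in a local ring, which is exactly the content already extracted in \cite[Lemma 4.7]{WaYa} and the proper-intersection formalism; the argument is essentially the same bookkeeping used there, so I would keep the write-up short and refer to those computations where possible.
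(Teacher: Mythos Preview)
Your argument for part (i) is fine and matches the paper's (which is terser but relies on the same codimension-$3$ observation for $Y_i\cap Y_j=D_0\cap D_i\cap D_j$).

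For part (ii), your decomposition of $\bigcap_{i\in I}\supp\pi^*D_i$ into $2^{\#I}$ pieces according to a choice of $\widetilde D_i$ versus $E_i$ is workable in principle, but the step you yourself flag as the ``main obstacle'' is not actually carried out: after bounding $\dim\bigcap_{i\in I\setminus J}E_i\le n-\#(I\setminus J)$, you assert that intersecting with $\bigcap_{j\in J}\widetilde D_j$ ``brings the total down to $\le n-\#I$'', but on the exceptional locus there is no a priori reason the strict transforms $\widetilde D_j$ cut codimension~$1$ each, and your chart-by-chart plan is only sketched. This is precisely the delicate point, and the sketch does not resolve it.

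The paper sidesteps all of this with a much cleaner dichotomy. Take an irreducible component $W$ of $\bigcap_{i\in I}\supp\pi^*D_i$ and ask whether $\pi(W)\subset Y$ or not. If $\pi(W)\subset Y$, then since $\supp\pi^*D_i\subset\pi^{-1}(\supp D_i)$ we get $\pi(W)\subset D_0\cap\bigcap_{i\in I}D_i$, which has dimension $\le n-\#I-1$; Proposition~\ref{CohenMacaulay} then gives $\dim W\le\dim\pi(W)+1\le n-\#I$. If $\pi(W)\not\subset Y$, then $W\setminus\pi^{-1}(Y)$ is a nonempty open subset of $W$ and, since $\pi$ is an isomorphism off $Y$ and $E_i\subset\pi^{-1}(Y)$, it lies in $\bigcap_{i\in I}\widetilde D_i\setminus\pi^{-1}(Y)\cong\bigcap_{i\in I}D_i\setminus Y$, which has dimension $\le n-\#I$. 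Either way you are done, with no need to mix $\widetilde D_i$'s and $E_j$'s or to work in blowup charts. I recommend you replace your decomposition argument with this two-case analysis.
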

\begin{proof}
Since $D_0,\dots,D_r$ are in general position, for every $i \neq j$ the intersection $D_0 \cap D_i \cap D_j$ has codimension at least 3, which implies $(i)$.

To show (ii), we first note that  if $r\ge n$, then the  intersection of any $n+1$ of $ \pi^*  D_i$, $0\le i\le r$,  is empty since  $D_0,D_1, \dots, D_{r} $ are in general position.
Next, let $I\subset \{1,\dots,r\}$ with $\# I\le n$.
We claim that $\dim (\cap_{i\in I} \supp  \pi^*  D_i)\le n-\#I $.
Let $W$ be an irreducible component of $\cap_{i\in I} \supp  \pi^*  D_i$.
If $\pi(W)\subset Y$, then $\pi(W)$ is a subset of $(\cap_{i\in I}  D_i)\cap D_0$ and hence $\dim \pi(W)<n-\# I$.   Then $\dim W\le n-\# I$ by Proposition \ref{CohenMacaulay}.
It remains to consider when $\pi(W)$ is not a subset of $Y$, which   implies that 
$W\setminus \pi^{-1}(Y)$ is not empty and is contained 
  in $\cap_{i\in I} \supp  \widetilde D_i\setminus \pi^{-1}(Y)$.
  Since $(\cap_{i\in I}  \widetilde {D_i})\setminus \pi^{-1}(Y)$ and $(\cap_{i\in I} D_i)   \setminus Y$ are isomorphic, this shows that $\dim W\le n-\# I$.
 \end{proof}

 We can now prove Theorem \ref{key}.

\begin{proof}[Proof of Theorem \ref{key}]
Let $c$ be the least common multiple of $d_0,d_1,\hdots,d_r$.  Let $A_0=cA$, $D_i':=\frac c{d_i} D_i\equiv A_0$, for $0\le i\le r$.  
For $P\in V(k)$ satisfying (i) and $v\notin S$, we have
\[
\lambda_{D_i',v}(P)=\frac c{d_i} \lambda_{D_i,v}(P)\le \frac {d_0}{d_i} \lambda_{D_0',v}(P)\le   \lambda_{D_0',v}(P),
\]
up to a $M_k$ constant  since $d_i\ge d_0$.
Similarly, if   $P\in V(k)$ satisfies (ii), then 
\[
\sum_{i=1}^r \lambda_{D_i',v}(P)=\sum_{i=1}^r\frac c{d_i} \lambda_{D_i,v}(P)\le \frac {c}{d_0} \lambda_{D_0,v}(P)=   \lambda_{D_0',v}(P) 
\]
up to a $M_k$ constant.
Therefore, by replacing $D_i$ by $D_i'$, $0\le i\le r$,  and $A$ by $A_0$, we may assume that $D_i\equiv A$ for each $0\le i\le r$ and replace  (i) by 
\begin{align}\label{i}
\lambda_{D_i,v}(P) \le   \lambda_{D_0,v}(P)+\gamma_v, \quad \text{for } 1\le i\le r;
\end{align}
and, when $v \notin S$, replace  (ii)  by 
\begin{align}\label{ii}
\sum_{i=1}^r \lambda_{D_i,v}(P)\le \lambda_{D_0,v}(P)+\gamma_v.
\end{align}
  
Let $Y_i=D_i\cap D_0$ and 
 $Y=\cup_{i=1}^r Y_i$.   Since $D_0$ is in general position with each $D_i$, $1\le i\le r$,   $D_0$ and each $D_i$ intersect properly by Remark \ref{gpCM}.  Hence, $Y$ is a local complete intersection.   
Let $\pi: \widetilde V\to V$ be the blowup along $Y$, and $E=E_1+\hdots+E_r$, $E_i=\pi^{-1}(Y_i)$, be the exceptional divisors.  Then by Proposition \ref{basicintersection}, $\pi^* D_i=\widetilde D_i+E_i$  for each $1\le i\le r$.
Furthermore, since  $Y$ is a local complete intersection, by Proposition \ref{CohenMacaulay}, $ \widetilde V$ is Cohen-Macaulay  and hence by Proposition \ref{basicintersection}, $ \pi^* D_1,\dots, \pi^* D_{r}$ intersect properly.
 Let $\ell$ be a fixed sufficiently large   integer  such that the line sheaf $\mathcal L = \mathcal O(\ell(n+1) \pi^*A- E)$ is ample and Lemma \ref{countinglambda} holds true, i.e.
\begin{align}
 \beta_{\mathcal L,\pi^*{D_i} }^{-1}   \le \frac  1{\ell}\left(1+  \frac1{\ell \sqrt \ell}\right).
\end{align}
Let $\epsilon' = \ell^{-5/2}$.
Theorem \ref{Ru-Vojta} applied with $\epsilon'$, $ \widetilde V$, $\mathcal L$ and  $\pi^* D_i=  \widetilde{D_i}+E_i$ (for $1\le i\le  r$), gives a proper Zariski closed subset $\widetilde Z\subset \widetilde V$, independent of $k$ and $S$ such that  

\begin{align}
\sum_{i=1}^{r}m_{\pi^*D_i,S}(x) &\le  \left(\frac1\ell(1+  \frac1{\ell\sqrt \ell})+\epsilon'\right)h_{\ell(n+1) \pi^* A- E}(x)\nonumber \\
&\le \left(1 + \frac 2{\ell\sqrt \ell}\right)(n+1) h_{\pi^* A}(x) -  \frac 1\ell  h_E(x) \label{eq:fromruvojta}
\end{align}
holds for  all $x$ outside the proper Zariski-closed subset $\widetilde Z $ of $ \widetilde V(k)$.
By the functorial properties of the local height functions, $h_D=m_{D,S}+N_{D,S}$, and $h_E = h_Y\circ \pi$, we have
\begin{align}\label{eq:fromfunct}
(r-n-1- \frac {2(n+1)}{\ell \sqrt \ell})\cdot h_A(\pi (x))+\frac{1 }{\ell}h_Y(\pi(x))\le    \sum_{i=1}^{r}N_{D_i,S}(\pi(x))
\end{align}
holds for  all $\pi (x)$ outside the proper Zariski-closed subset $Z=\pi(\widetilde Z)$ of $V(k)$. 

For all $P=\pi(x)\in V(k)$ such that  $r\ge 2n+1$ and \eqref{i} holds, i.e. $\lambda_{D_i,v}(P)\le \lambda_{D_0,v}(P)+\gamma_v$ for each $1\le i\le r$, we have
\begin{align}\label{countinglower}
h_Y(P)&\ge N_{Y,S}(P)=\sum_{i=1}^r \sum_{v\notin S}\min\{\lambda_{D_i,v}(P),\lambda_{D_0,v}(P)\}\cr
&=\sum_{i=1}^r N_{D_i,S}(P)-\sum_{i=1}^r\sum_{v\notin S}\max\{0,\gamma_v\}.
\end{align}
Furthermore, it follows from Lemma \ref{subgeneralweil} and Proposition \ref{compareheight} with $\epsilon=\frac1{\ell^2}$ that for all $P\in V(k)$,
\begin{align}\label{counting}
\sum_{i=1}^{r}N_{D_i,S}(P)\le nN_{D_0,S}(P)+O(1)\le (n+\frac1{\ell^2})h_A(P)+O(1).
\end{align}

Apply Theorem \ref{LevinDuke} with $\epsilon=\frac1{\ell^2}$, then there exists a proper  Zariski-closed subset $Z'$ of $ V(k)$, independent of $S$ and $k$, such that, for  all $P\in V(k)\setminus Z'$,
\begin{align}\label{countinglower2}
\sum_{i=1}^r N_{D_i,S}(P) 
&\ge (r-n-1-\frac1{\ell^2})h_A(P).
\end{align}

We now use  \eqref{counting} to get an upper bound for the right hand side of  \eqref{eq:fromfunct} and use \eqref{countinglower}  and \eqref{countinglower2} for the left hand side.  Then we have that
\begin{align}\label{heightbound1}
 (r-2n-1+\frac 1\ell- \frac {2(n+1)}{\ell \sqrt \ell}-\frac2{\ell^2})\cdot h_A(\pi (x)) \le    O(1)
\end{align}
holds for  all  but finitely many  $P\in V(k)$ outside $Z\cup Z'$.
Since $A$ is ample, $r\ge 2n+1$, and  $\frac 1\ell- \frac {2(n+1)}{\ell \sqrt \ell}-\frac2{\ell^2}>0$,  there are only finitely many  $P\in V(k)$ such that \eqref{heightbound1}   holds.  This shows (i).

We are left considering when $r\ge n+2$ and \eqref{ii} holds.
In this case, we have similarly to \eqref{counting}
\begin{align}\label{counting2}
\sum_{i=1}^{r}N_{D_i,S}(P)\le N_{D_0,S}(P)+O(1)\le (1+\frac1{\ell^2})h_A(P)+O(1) 
\end{align}
for all $P\in V(k)$.  
Together with \eqref{eq:fromfunct},  \eqref{countinglower} and \eqref{countinglower2}, we have that
\begin{align}\label{heightbound}
 (r-n-2+\frac 1\ell- \frac {2(n+1)}{\ell \sqrt \ell}-\frac2{\ell^2})\cdot h_A(\pi (x)) \le    O(1)
\end{align}
holds for  all but finitely many  $P\in V(k)$ outside a proper Zariski-closed $Z\cup Z'$.  
Since $A$ is ample, $r\ge n+2$, and  $\frac 1\ell- \frac {2(n+1)}{\ell \sqrt \ell}-\frac2{\ell^2}>0$, this implies (ii).

\end{proof}

\begin{proof}[Proof of Theorem \ref{IP}]
  Since the property of arithmetically pseudo-hyperbolic is independent of the multiplicity of the divisors, we may assume that there exists a positive constant such that 
$D_i\equiv d A$ for $0\le i\le r$.
 
Let $Y_i=D_i\cap D_0$, $Y=\cup_{i=1}^r Y_i$, and  $\pi: \widetilde V\to  V$ be the blowup along $Y$, and $E=E_1+\hdots+E_r$, $E_i=\pi^{-1}(Y_i)$, be the exceptional divisors.
By Proposition \ref{basicintersection}, $\pi^* D_i=\widetilde D_i+E_i$.  
 Let $R$ be a set of  $(D,S)$-integral points, where  $D=\widetilde D_1+\cdots+\widetilde D_r$.
Then there exists $M_k$-constant $\{\gamma_v\}$ such that for all $v\notin S$, $\lambda_{D,v}(P)\le \gamma_v$ for all $P\in R$.
By Lemma \ref{DivisibilityandIP}, we have for each $1\le i\le r$
\begin{align}\label{comparelambda}
\lambda_{D_i,v}(\pi(P))\le \lambda_{D_0,v}(\pi(P))\quad\text{up to a $M_k$-constant for $P\in R$ and $v\notin S$.}
\end{align} 
Since $r\ge 2n+1$, Theorem \ref{key} (i) implies that there exists a proper Zariski closed subset $Z$ of $V$, independent of $k$ and $S$,  such that   there are only  finitely many $\pi(P)\in V(k)\setminus Z$, i.e. $P\notin\pi^{-1}(Z)$, satisfying \eqref{comparelambda}.    
Since the choice of $Z$ is independent of the $M_k$-constant, it implies that $\tilde V\setminus D$ is arithmetically pseudo-hyperbolic.
\end{proof}

\section{Proof of Theorem \ref{proposition7}}\label{sec:prop7}
In this section we prove Theorem \ref{proposition7}. The main technical result is a computation of the constant $\beta$. To this end we generalize some construction of Autissier removing some hypotheses.

\subsection{Background  Results and computing \texorpdfstring{$\beta$}{b}}
We start by recalling some basic properties on global sections of line bundles, and refer to \cite[Section 7.3]{levin_annal} for further references and proofs. 

\begin{lemma}
\label{nef2}
Suppose $D$ is a nef divisor on a nonsingular projective variety $X$.  Let $n=\dim X$.  Then $h^0(X,\mathcal{O}(ND))=(D^n/n!)N^n+O(N^{n-1}).$  In particular, $D^n>0$ if and only if $D$ is big. 
\end{lemma}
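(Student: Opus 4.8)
The plan is to combine asymptotic Riemann--Roch with an asymptotic vanishing estimate for the higher cohomology of $\mathcal O_X(ND)$; the bigness assertion then follows formally.

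First I would invoke asymptotic (Snapper) Riemann--Roch: for \emph{any} Cartier divisor $D$ on the $n$-dimensional projective variety $X$,
\[
\chi\bigl(X,\mathcal O_X(ND)\bigr)=\frac{(D^n)}{n!}\,N^n+O(N^{n-1}),
\]
where $(D^n)$ is the top self-intersection number. Since $h^0=\chi+\sum_{i\ge 1}(-1)^{i+1}h^i$, the first assertion is equivalent to $h^i\bigl(X,\mathcal O_X(ND)\bigr)=O(N^{n-1})$ for every $i\ge 1$ when $D$ is nef. I would prove the sharper bound $h^i\bigl(X,\mathcal O_X(ND+F)\bigr)=O(N^{n-i})$ for a fixed auxiliary divisor $F$ and all $i\ge 0$, by induction on $n=\dim X$ (the case $n=0$ being trivial; for a fixed $X$ one treats $i\ge 1$ first via the recursion below, then $i=0$ using the Euler-characteristic expansion together with the displayed formula). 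Fix a very ample $A$ and, by Bertini, a smooth member $Y\in|A|$, so that $D|_Y$ is nef on the $(n-1)$-dimensional $Y$. Twisting down by $Y$ gives
\[
0\to\mathcal O_X(ND+F)\to\mathcal O_X(ND+F+A)\to\mathcal O_Y\bigl((ND+F+A)|_Y\bigr)\to 0,
\]
whence
\[
h^i\bigl(X,\mathcal O_X(ND+F)\bigr)\le h^i\bigl(X,\mathcal O_X(ND+F+A)\bigr)+h^{i-1}\bigl(Y,\mathcal O_Y((ND+F+A)|_Y)\bigr),
\]
and the induction hypothesis on $Y$ bounds the last term by $O(N^{n-i})$. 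To terminate the recursion \emph{uniformly in $N$} I would invoke Fujita's vanishing theorem, which supplies an integer $k_0$ depending only on $A$ and $F$ (not on $N$) with $h^i\bigl(X,\mathcal O_X(ND+F+k_0A)\bigr)=0$ for all $i\ge 1$, valid because $ND$ is nef for every $N$. Iterating the displayed inequality $k_0$ times then yields $h^i\bigl(X,\mathcal O_X(ND+F)\bigr)=O(N^{n-i})$; taking $F=0$ gives the asymptotic vanishing we need, hence $h^0\bigl(X,\mathcal O_X(ND)\bigr)=\tfrac{(D^n)}{n!}N^n+O(N^{n-1})$.

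I expect this induction -- and in particular the appeal to Fujita vanishing that makes the recursion terminate independently of $N$ -- to be the only substantive point; the rest is formal. For the final assertion, recall that $D$ is big precisely when $h^0\bigl(X,\mathcal O_X(ND)\bigr)$ grows like $N^{n}$, and that $(D^n)\ge 0$ for every nef divisor (nonnegativity of the top self-intersection of a nef class, a standard fact, e.g.\ since $D+\varepsilon A$ is ample for all $\varepsilon>0$ and any ample $A$). Combining these with the formula just proved, the leading coefficient $(D^n)/n!$ of $h^0\bigl(X,\mathcal O_X(ND)\bigr)$ is positive if and only if $(D^n)>0$, which is therefore equivalent to $D$ being big.
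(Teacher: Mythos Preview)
Your argument is correct and is essentially the standard proof of this fact (asymptotic Riemann--Roch plus Fujita vanishing to control higher cohomology, as in Lazarsfeld's \emph{Positivity I}, Theorem~1.4.40 and Corollary~1.4.41). The induction is set up properly: the key point, which you isolate correctly, is that Fujita's theorem furnishes a single $k_0$ depending only on $A$ and $F$ and valid for \emph{all} nef twists $ND$, so the recursion terminates uniformly in $N$; after that the $i=0$ case follows from the Euler-characteristic expansion, and the bigness criterion drops out from $(D^n)\ge 0$ for nef $D$.

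There is nothing to compare against in the paper itself: the lemma is stated there as a background result without proof, with a reference to \cite[Section~7.3]{levin_annal} (and the paper later invokes \cite[Theorem~1.4.40]{Laza}, which is exactly the asymptotic-vanishing statement you reprove). So your write-up supplies a self-contained argument where the paper is content to cite one.
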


We will also make use of two basic exact sequences.(See \cite[Lemma 7.7]{levin_annal}.)
 
\begin{lemma}
\label{exact}
Let $D$ be an effective divisor on a projective variety $X$ with inclusion map $i:D \to X$.  Let $\mathcal{L}$ be an invertible sheaf on $X$.  Then we have exact sequences
\begin{align*}
&0 \to \mathcal{L}\otimes\mathcal{O}(-D) \to \mathcal{L} \to i_{*}(i^*\mathcal{L}) \to 0,\\
&0 \to H^0(X,\mathcal{L}\otimes \mathcal{O}(-D))\to H^0(X,\mathcal{L}) \to H^0(D,i^*\mathcal{L}).
\end{align*}
\end{lemma}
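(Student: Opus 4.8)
The plan is to derive both sequences from the standard ideal-sheaf sequence of the effective divisor $D$. First I would recall that, since $D$ is an effective Cartier divisor, the invertible sheaf $\mathcal{O}(-D)$ sits inside $\mathcal{O}_X$ as the ideal sheaf of $D$, with cokernel the structure sheaf of $D$ pushed forward to $X$; this gives the short exact sequence
\[
0 \to \mathcal{O}(-D) \to \mathcal{O}_X \to i_*\mathcal{O}_D \to 0 .
\]
Tensoring with $\mathcal{L}$ — which is locally free of finite rank, hence flat, so that $\otimes \mathcal{L}$ preserves exactness — yields
\[
0 \to \mathcal{L}\otimes\mathcal{O}(-D) \to \mathcal{L} \to \mathcal{L}\otimes i_*\mathcal{O}_D \to 0 .
\]
The one identification to make is $\mathcal{L}\otimes i_*\mathcal{O}_D \cong i_*(i^*\mathcal{L})$, which is the projection formula for the closed immersion $i$ (valid because $\mathcal{L}$ is locally free of finite rank); this establishes the first exact sequence. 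Alternatively, one can check this sequence locally: on an affine open where $D$ is cut out by a nonzerodivisor $f$ and $\mathcal{L}$ is trivialized, it is simply $0 \to \mathcal{O} \xrightarrow{f} \mathcal{O} \to \mathcal{O}/(f) \to 0$.

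For the second sequence, I would apply the left-exact global sections functor $H^0(X,-)$ to the first sequence, which immediately gives exactness of
\[
0 \to H^0(X,\mathcal{L}\otimes\mathcal{O}(-D)) \to H^0(X,\mathcal{L}) \to H^0(X, i_*(i^*\mathcal{L})) .
\]
Finally, since $i$ is a closed immersion, $H^0(X, i_*\mathcal{F}) = H^0(D,\mathcal{F})$ for any sheaf $\mathcal{F}$ on $D$ (sections of a pushforward along $i$ coincide with sections on the source), so $H^0(X, i_*(i^*\mathcal{L})) = H^0(D, i^*\mathcal{L})$, which is precisely the stated sequence.

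There is no genuine obstacle here; the statement is a routine consequence of the ideal-sheaf sequence. The only points that merit a word of justification are the flatness of $\mathcal{L}$ used when tensoring and the projection formula identifying the cokernel term, both of which are standard, so the proof is short. (If one prefers, one may simply cite \cite[Lemma 7.7]{levin_annal}.)
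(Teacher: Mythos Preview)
Your argument is correct and is the standard derivation of these sequences: the ideal-sheaf sequence of an effective Cartier divisor, tensored with the locally free sheaf $\mathcal{L}$, together with the projection formula and left-exactness of $H^0$. The paper itself gives no proof at all for this lemma, simply citing \cite[Lemma 7.7]{levin_annal}, so your write-up in fact supplies more detail than the paper does; there is nothing to correct.
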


\begin{lemma}[{\cite[Lemma 7.9]{levin_annal}}]
\label{countingh0}
Let $X$ be a nonsingular projective variety of dimension $n$. Let $D$ and $E$ be any divisor on $X$, and let $F$ be a nef divisor on $X$.  Then we have  
\begin{align*}
h^0(X,\mathcal{O}(ND+E-mF))\le h^0(X,\mathcal{O}(ND))+O(N^{n-1})\quad\text{for all } m, N\ge 0,
\end{align*}
where the implied constant is independent of $m$ and $N$.
\end{lemma}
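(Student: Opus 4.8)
The plan is to first remove the term $-mF$ uniformly in $m$, and then to deal with the fixed divisor that remains by means of the exact sequences of Lemma~\ref{exact}.

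\emph{Removing the dependence on $m$.} For every $m\ge 0$ the divisor $mF$ is nef, so a standard global generation statement (via Castelnuovo--Mumford regularity together with Kawamata--Viehweg vanishing) provides a \emph{fixed} very ample divisor $H$ on $X$, independent of $N$ and $m$, such that $\mathcal{O}(mF+H)$ is globally generated --- in particular $mF+H$ is linearly equivalent to a nonzero effective divisor --- for all $m\ge 0$. Writing $E=E_{1}-E_{2}$ with $E_{1},E_{2}$ effective, the divisor $E_{2}+mF+H$ is then linearly equivalent to an effective divisor, and multiplication by its canonical section gives an injection
\[
H^{0}\!\left(X,\mathcal{O}(ND+E-mF)\right)\hookrightarrow H^{0}\!\left(X,\mathcal{O}(ND+E_{1}+H)\right).
\]
Hence $h^{0}(X,\mathcal{O}(ND+E-mF))\le h^{0}(X,\mathcal{O}(ND+E_{1}+H))$, and the right-hand side no longer depends on $m$.

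\emph{Comparison with $h^{0}(ND)$.} Set $W:=E_{1}+H$, a fixed effective divisor, hence a closed subscheme of $X$ of dimension $\le n-1$. The exact sequence of global sections in Lemma~\ref{exact}, applied with $\mathcal{L}=\mathcal{O}(ND+W)$ and the divisor $W$, gives
\[
h^{0}\!\left(X,\mathcal{O}(ND+W)\right)\ \le\ h^{0}\!\left(X,\mathcal{O}(ND)\right)+h^{0}\!\left(W,\ \mathcal{O}_{X}(ND+W)|_{W}\right),
\]
so it suffices to bound the last term by $O(N^{n-1})$ with implied constant independent of $N$ (it is already independent of $m$). Choosing a filtration of $\mathcal{O}_{W}$ by $\mathcal{O}_{X}$-submodules with quotients $\mathcal{O}_{Y}$, for $Y\subseteq\supp W$ integral of dimension $d\le n-1$, reduces this to bounding $h^{0}\!\left(Y,\mathcal{O}_{X}(ND)|_{Y}\otimes\mathcal{G}\right)$ for finitely many such $Y$ and fixed line bundles $\mathcal{G}$; since on each $Y$ any fixed divisor is dominated by a multiple of an ample divisor, Lemma~\ref{nef2} (applied on a resolution of $Y$) gives $h^{0}(Y,-)=O(N^{d})=O(N^{n-1})$. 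Combining the two steps would yield the statement.

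\emph{Main obstacle.} The one genuinely delicate point is the uniformity in $m$: the term $-mF$ must be disposed of \emph{before} any restriction to a divisor, since otherwise the restricted bundle would still carry an $m$-dependent part and the dimension count would fail to be uniform. This is precisely what nefness of $F$ buys, through the uniform effectivity of $mF+H$ for a single fixed $H$; mere pseudo-effectivity of $F$ would not be enough (one can have $h^{0}(mF)=0$ for all $m\ge 1$ and $h^{0}(ND+E-mF)>h^{0}(ND)$ for small $N$). The remaining points --- the negative part of $E$, the possibly non-reduced structure of $W$, and the $O(N^{n-1})$ growth of $h^{0}$ on subvarieties of dimension $\le n-1$ --- are routine.
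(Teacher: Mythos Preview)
The paper does not supply its own proof of this lemma; it is quoted verbatim from \cite[Lemma 7.9]{levin_annal}. So there is nothing in the present paper to compare your argument against.

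That said, your argument is essentially correct and is in fact the standard one. Two small points of precision. First, the uniform global generation of $mF+H$ for a single fixed $H$ is most cleanly obtained from Fujita's vanishing theorem combined with Castelnuovo--Mumford regularity (Kawamata--Viehweg is not quite the right reference, as one needs vanishing that is uniform over \emph{all} nef twists, which is exactly what Fujita vanishing provides). Second, your invocation of Lemma~\ref{nef2} for the growth $h^{0}(Y,\mathcal{O}_X(ND)|_Y\otimes\mathcal{G})=O(N^{\dim Y})$ is slightly off, since that lemma assumes nefness and $D|_Y$ need not be nef; what you actually want is the elementary fact that on any projective scheme $Y$ of dimension $d$ and for any line bundle $L$, one has $h^{0}(Y,L^{N})=O(N^{d})$ (write $L=A\otimes B^{-1}$ with $A,B$ very ample and inject $H^{0}(L^{N})\hookrightarrow H^{0}(A^{N})$). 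With these adjustments the proof goes through.
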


We will use the following lemma and its corollary, which are modification of  {\cite[Lemma 4.2]{aut}} and {\cite[Corollary 4.3]{aut}} where we weaken the original hypothesis on $B$.
\begin{lemma}\label{lem_aut1_4_2}
Let $X$ be a  nonsingular projective variety of dimension $n\ge 2$.  Let $B$ be a nonsingular subvariety of $X$ of codimension $1$ that is also a nef Cartier divisor.  Let $A$ be a nef Cartier divisor on $X$ such that $A-B$ is also nef. Let $\delta>0$ be a positive real number. Then, for any positive integers  $N$ and $m$  with $1 \leq m\leq \delta N$, we have
\begin{eqnarray}\label{h0AB}
h^0(X,\mathcal{O}(NA-mB))&\ge& \frac{A^n}{n!}N^n 
- \frac{A^{n-1}.B}{(n-1)!}N^{n-1}m \cr
&~&+ \frac{(n-1)A^{n-2}.B^2}{n!} N^{n-2}\min\{m^2, N^2\} -O(N^{n-1}),
\end{eqnarray}
where the implicit constant depends on $\delta$.
\end{lemma}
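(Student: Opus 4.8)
The plan is to estimate $h^0(X,\mathcal{O}(NA-mB))$ from below by filtering the sheaf $\mathcal{O}(NA)$ along successive copies of $B$ and summing the resulting lower bounds on the graded pieces, in the spirit of Autissier's argument in \cite{aut}. Concretely, iterating the second exact sequence of Lemma \ref{exact} with $\mathcal{L}=\mathcal{O}(NA-jB)$ and $D=B$ gives
\[
h^0(X,\mathcal{O}(NA-mB)) \ge h^0(X,\mathcal{O}(NA)) - \sum_{j=0}^{m-1} h^0\bigl(B,\, i^*\mathcal{O}(NA-jB)\bigr),
\]
so the problem reduces to bounding $h^0(B, \mathcal{O}((NA-jB)|_B))$ from above for each $0\le j< m$. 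Since $A|_B$ is nef on the $(n-1)$-dimensional nonsingular variety $B$, and $(A-B)|_B$ is nef as well, one wants an upper bound of the shape $h^0(B,\mathcal{O}((NA-jB)|_B)) \le \frac{(A|_B)^{n-1}}{(n-1)!}N^{n-1} - \frac{(A|_B)^{n-2}.(B|_B)}{(n-2)!}N^{n-2}j + O(N^{n-2})$; this is itself obtained by one more application of the same filtration (peeling off $j$ copies of $B|_B$ from $\mathcal{O}(NA|_B)$) together with Lemma \ref{nef2} applied to the nef divisor $(NA-jB)|_B$ on $B$ when $j$ is in the allowed range, plus Lemma \ref{countingh0} to control error terms uniformly in $j$ and $N$.

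The main steps, in order: (1) set up the two-level filtration and record the telescoping inequality above; (2) for the inner level, use that $A|_B$ and $(A-B)|_B$ are nef to write, via Lemma \ref{nef2} and Lemma \ref{exact} again, $h^0(B,\mathcal{O}((NA-jB)|_B)) \le \frac{(A|_B)^{n-1}}{(n-1)!}N^{n-1} - j\,\frac{(A|_B)^{n-2}.(B|_B)}{(n-2)!}N^{n-2} + O(N^{n-2})$ with an implied constant depending only on $\delta$ (here the hypothesis $1\le m\le\delta N$ is what keeps $j/N$ bounded, so the quadratic-in-$j$ correction and all error terms stay under control); (3) substitute into the outer sum and evaluate $\sum_{j=0}^{m-1} 1 = m$, $\sum_{j=0}^{m-1} j = \binom{m}{2} = \frac{m^2}{2} - \frac m2$, converting intersection numbers on $B$ back to intersection numbers on $X$ via $(A|_B)^{k} \cdot (B|_B)^{\ell} = A^k.B^{\ell+1}$; (4) combine with $h^0(X,\mathcal{O}(NA)) = \frac{A^n}{n!}N^n + O(N^{n-1})$ from Lemma \ref{nef2}, collect the $N^n$, $N^{n-1}m$, and $N^{n-2}m^2$ terms, and absorb everything else into $O(N^{n-1})$; (5) finally replace $m^2$ by $\min\{m^2,N^2\}$, which is harmless since $m\le \delta N$ means $m^2$ and $\min\{m^2,N^2\}$ differ by at most a constant times $N^2 = O(N^{n-1})\cdot N^{3-n}$—one must be slightly careful here and instead note that for the stated inequality we only ever need the weaker lower bound, so writing $\min\{m^2,N^2\}$ only makes the claimed bound smaller and hence easier.

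The main obstacle I expect is getting the inner estimate in step (2) with the correct second-order coefficient and, crucially, with error terms that are uniform in $j$ for $0\le j\le \delta N$: a naive application of asymptotic Riemann–Roch on $B$ produces error terms that could in principle depend on $j$, and one must invoke Lemma \ref{countingh0} (with $D = A|_B$, $F = B|_B$ nef) precisely to make the $O(N^{n-2})$ uniform. A secondary subtlety is that the original statements in \cite{aut} assume more about $B$ (e.g. ampleness or that $B$ is a very ample section), whereas here $B$ is only assumed nonsingular, nef, and Cartier with $A-B$ nef; one must check that nefness of $A|_B$ and $(A-B)|_B$ genuinely suffices at every place where Autissier used ampleness—this is exactly the point of the phrase "we weaken the original hypothesis on $B$," and it is where the proof needs the most care. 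Once the uniform inner bound is in hand, steps (3)–(5) are routine bookkeeping with binomial sums and intersection numbers.
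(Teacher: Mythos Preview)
Your telescoping framework is sound and is exactly what the paper uses in part of its argument, but there is a genuine gap in step (2), and step (5) does not rescue it.

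The inner upper bound you claim,
\[
h^0\bigl(B,\mathcal O((NA-jB)|_B)\bigr) \le \frac{A^{n-1}.B}{(n-1)!}N^{n-1} - \frac{A^{n-2}.B^2}{(n-2)!}N^{n-2}j + O(N^{n-2}),
\]
uniformly for $0\le j\le \delta N$, cannot hold.  First, Lemma~\ref{countingh0} only yields the \emph{crude} bound $h^0(B,(NA-jB)|_B)\le h^0(B,NA|_B)+O(N^{n-2})$, with no negative $j$-term; it does not produce the refinement you want.  Second, even for $j\le N$ where $(NA-jB)|_B$ is nef and asymptotic Riemann--Roch applies, the expansion of $((NA-jB)|_B)^{n-1}$ has terms of order $N^{n-1-k}j^k$ for all $0\le k\le n-1$, each of which can be as large as $N^{n-1}$ when $j\sim N$, so the two-term truncation with $O(N^{n-2})$ remainder is simply false.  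Third, for $j>N$ your proposed upper bound eventually becomes negative, which is impossible since $h^0\ge 0$.

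More decisively, the lower bound with $m^2$ in place of $\min\{m^2,N^2\}$ is \emph{false} for large $m/N$: when $m=\delta N$ with $\delta$ large, the coefficient of $N^n$ on the right-hand side is dominated by $\frac{(n-1)A^{n-2}.B^2}{n!}\delta^2>0$ and grows without bound, while $h^0(X,NA-mB)\le h^0(X,NA)=O(N^n)$.  So your step~(5) strategy of first proving the $m^2$ bound and then passing to $\min\{m^2,N^2\}$ cannot succeed; the $\min$ is not cosmetic.

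The paper's proof instead splits at $m=N$.  For $m\le N$ the divisor $NA-mB=(N-m)A+m(A-B)$ is nef on $X$, and Autissier's original Riemann--Roch computation on $X$ (not on $B$) goes through using only that $A$, $B$, $A-B$ are nef; this is the case where one genuinely gets the $m^2$ term.  For $m>N$ one telescopes \emph{starting from $N$}, not from $0$: combining the already-established $m=N$ bound with the crude Lemma~\ref{countingh0} estimate for $N\le j<m$ gives precisely the stated inequality with $\min\{m^2,N^2\}=N^2$.  The ``weakening of the hypothesis on $B$'' refers to this $m>N$ extension, not to the $m\le N$ case.
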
 

\begin{proof}[Proof of Lemma \ref{lem_aut1_4_2}]
We will follow the proof of  {\cite[Lemma 4.2]{aut}} with necessary modification.
We first note when $m \leq N$, \eqref{h0AB} follows from the proof in {\cite[Lemma 4.2]{aut}}, since this part of proof only need the assumption that $A$, $B$ and $A-B$ are nef.

For the case that $m>N$, we let $N\le j\le m$.
Let $i:B\to X$ be the inclusion map.  From Lemma \ref{exact}, we have an exact sequence
\begin{align*}
0\to H^0(X,\mathcal{O}(NA-(j+1)B)) \to H^0&(X,\mathcal{O}(NA-jB)) 
 \to H^0(B,i^*\mathcal{O}(NA-jB)).
\end{align*}
Therefore, we have 
\begin{align*}
h^0(X,\mathcal{O}(NA-(j+1)B))\ge h^0&(X,\mathcal{O}(NA-jB))-h^0(B,i^*\mathcal{O}(NA-jB)).
\end{align*}
Hence,
\begin{align}\label{NAB2}
h^0(X,\mathcal{O}(NA-mB))\ge h^0&(X,\mathcal{O}(NA-NB))-\sum_{j=N}^{m-1}h^0(B,i^*\mathcal{O}(NA-jB)).
\end{align}
 Since $B$ is a nef divisor on $X$, $i^*\calO(B)$ is nef.  Applying Lemma \ref{countingh0} to $B$, which is a non-singular subvariety of $X$, and the divisors corresponding to $i^*\mathcal{O}(A)$ and $i^*\mathcal{O}(B)$, we have

\begin{align}\label{NAB3}
 h^0(B,i^*\mathcal{O}(NA-jB))\le h^0(B,i^*\mathcal{O}(NA))+O(N^{n-2})=\frac{A^{n-1}B}{(n-1)!}N^{n-1}+O(N^{n-2}). 
\end{align}
Then, from \eqref{NAB2}, \eqref{NAB3}, Lemma \ref{nef2}, and the estimate of  $h^0 (X,\mathcal{O}(NA-NB))$ in the first case, it follows that
\begin{align*}
h^0(X,\mathcal{O}(NA-mB))&\ge h^0 (X,\mathcal{O}(NA-NB))-(m-N)\frac{A^{n-1}B}{(n-1)!}N^{n-1}-O(N^{n-2})\cr
&\ge \dfrac{A^n}{n!}N^n 
- \dfrac{A^{n-1}.B}{(n-1)!}N^{n-1}m  + \dfrac{(n-1)A^{n-2}.B^2}{n!} N^{n}  -O(N^{n-1}).
\end{align*} 
This shows \eqref{h0AB} for the case that $m>N$.
\end{proof}

We use Lemma \ref{lem_aut1_4_2} to obtain a lower bound on the $\beta$ constant in terms of intersection numbers.
\begin{corollary}\label{beta}
Let $X$ be a nonsingular projective variety of dimension $n\ge 2$.  Let $B$ be a nonsingular subvariety of $X$ of codimension $1$ that is also a nef Cartier divisor on $X$.  Let $A$ be a big and nef Cartier divisor on $X$ such that $A-B$ is  nef. Then
\[
\beta_{A, B} \ge  \frac { A^n}{2n A^{n-1}.B}  +\frac{(n-1)A^{n-2}.B^2}{ A^n}g(\dfrac{A^n}{n A^{n-1}.B}),
\]
  where $g: \mathbb R^+\rightarrow \mathbb R^+$ is the function given by $g(x)={x^3 /3}$ if $x\leq 1$ and $g(x)=x-{2 / 3}$ for $x\ge 1$.  
\end{corollary}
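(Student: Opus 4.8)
The plan is to read the bound directly off Lemma~\ref{lem_aut1_4_2} by summing the inequality there over $m$ and dividing by the asymptotics of $h^0(X,\mathcal O(NA))$. Write $a=A^n$, $b=A^{n-1}.B$, $c=A^{n-2}.B^2$; since $A$ is big and nef, Lemma~\ref{nef2} gives $a>0$, and we may assume $b>0$, the asserted bound being otherwise undefined. Put
\[
t_0:=\frac{a}{nb}=\frac{A^n}{nA^{n-1}.B},\qquad \delta:=t_0+1 .
\]
For the denominator in the definition of $\beta_{A,B}$, Lemma~\ref{nef2} applied to the big nef divisor $A$ yields $h^0(X,\mathcal O(NA))=\frac{a}{n!}N^n+O(N^{n-1})$, hence $N\cdot h^0(X,\mathcal O(NA))=\frac{a}{n!}N^{n+1}+O(N^n)$.

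For the numerator $\sum_{m\ge 1}h^0(X,\mathcal O(NA-mB))$ I would first discard the terms with $m>\lfloor t_0 N\rfloor$, which only decreases the sum since every summand is $\ge 0$. For $1\le m\le\lfloor t_0 N\rfloor$ we have $m\le\delta N$, so Lemma~\ref{lem_aut1_4_2} applies with the fixed $\delta$ above (making the implied constant uniform in $m$) and gives
\[
h^0(X,\mathcal O(NA-mB))\ \ge\ \frac{a}{n!}N^n-\frac{b}{(n-1)!}N^{n-1}m+\frac{(n-1)c}{n!}N^{n-2}\min\{m^2,N^2\}-O(N^{n-1}).
\]
Summing over $1\le m\le\lfloor t_0 N\rfloor$ and using $\sum 1=t_0 N+O(1)$, $\sum m=\tfrac{t_0^2}{2}N^2+O(N)$, and $\sum\min\{m^2,N^2\}=g(t_0)N^3+O(N^2)$ — the last obtained by splitting the sum at $m=N$, which is exactly where the two branches of $g$ originate — with accumulated error $O(N^n)$, I obtain
\[
\sum_{m\ge 1}h^0(X,\mathcal O(NA-mB))\ \ge\ \Big(\frac{a\,t_0}{n!}-\frac{b\,t_0^2}{2(n-1)!}+\frac{(n-1)c}{n!}\,g(t_0)\Big)N^{n+1}+O(N^n).
\]

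Dividing by $N\cdot h^0(X,\mathcal O(NA))$ and letting $N\to\infty$ then gives $\beta_{A,B}\ge t_0-\frac{nb}{2a}t_0^2+\frac{(n-1)c}{a}g(t_0)$, and substituting $t_0=\tfrac{a}{nb}$ collapses the first two terms to $\tfrac{a}{nb}-\tfrac{a}{2nb}=\tfrac{a}{2nb}$, which is exactly the claimed inequality. The computation is elementary; the only points requiring a little care are the evaluation of $\sum_{m\le t_0 N}\min\{m^2,N^2\}$, whose two regimes $t_0\le 1$ and $t_0>1$ reproduce the piecewise definition of $g$, and the bookkeeping that the $O(N^{n-1})$ error per summand contributes only $O(N^n)$ in total, hence disappears after dividing by $N^{n+1}$. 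I would also remark that $t_0=A^n/(nA^{n-1}.B)$ is precisely the maximizer of the quadratic $t\mapsto t-\frac{nb}{2a}t^2$, which is why the bound takes this clean closed form; truncating the sum at a larger value of $m$ could only (weakly) improve the estimate while destroying the explicit shape.
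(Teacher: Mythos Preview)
Your proof is correct and follows essentially the same route as the paper's: both truncate the sum at $m\approx t_0 N$ with $t_0=A^n/(nA^{n-1}.B)$, invoke Lemma~\ref{lem_aut1_4_2} termwise, and sum, the only cosmetic difference being that the paper restricts to $N$ with $t_0 N\in\mathbb Z$ while you use $\lfloor t_0 N\rfloor$. Your added remark that $t_0$ is the maximizer of the quadratic part is a nice explanation for the choice of truncation that the paper leaves implicit.
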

\begin{proof}
Let 
\[
  b=\dfrac{A^n}{n A^{n-1}.B}\qquad \text{ and } \qquad a=(n-1)A^{n-2}.B^2.
  \]
For $N$ sufficiently large and such that $bN$ is an integer,  Lemma \ref{lem_aut1_4_2} implies that
\begin{equation*}
 \begin{split}
  &\sum_{m=1}^{\infty} h^0(NA-m B) \label{aut1} \\
  &\quad \ge \sum_{m=1}^{ b N }
    \left( \dfrac{A^n}{ n!}N^n 
    - \dfrac{A^{n-1}.B}{ (n-1)!}N^{n-1}m + \dfrac{a}{n!}N^{n-2}\min\{m^2, N^2\}\right)+O(N^n) \\
  &\quad\ge \left( \dfrac{A^n}{n!} b
    - \dfrac{A^{n-1}.B}{(n-1)!}\cdot\dfrac{b^2}{2} + \dfrac{a}{n!}g(b)\right)N^{n+1}+O(N^n) \\
  &\quad= \left(\dfrac{b}{2}+\dfrac{a}{A^n}g(b)\right) A^n \dfrac{N^{n+1}}{n!} +O(N^n). \\
 \end{split}
\end{equation*}
\end{proof}
\subsection{Proof of Theorem \ref{proposition7}}
We apply Corollary \ref{beta} to the setting of Theorem \ref{proposition7}.
\begin{lemma}\label{beta2}
Let $H_1,\hdots,H_{2n}$ be $2n$ hyperplanes in general position on $\mathbb P^n$ and choose $n+1$ points $P_i$ such that $P_i\in H_i$, $1\le i\le n+1$, and $P_i\notin H_j$ if $i\ne j$ for $1\le j\le 2n$.  Let $\pi:  X\to\mathbb P^n$ be the blowup of the $n+1$ points $P_i$, and let $ \widetilde H_i$ be the strict transform of $H_i$. Finally, let $\ell$ be a sufficiently large integer and let $A =\sum_{i=1}^{n+1} \ell \widetilde H_i+\widetilde H_{n+2}.$  Then $A$ is big and nef  and
\begin{align}\label{betai} 
 \beta_{A, \widetilde H_{1}}=\cdots= \beta_{A, \widetilde H_{n+1}}>\frac{(n+1)\ell}{2n}+o(\ell), 
\end{align} 
\begin{align}\label{betan2}
 \beta_{A, \widetilde H_{n+2}}=\cdots= \beta_{A, \widetilde H_{2n}}>\frac{(n+1)\ell}{2n}-\frac{\ell}{2n(n+1)^{n-2}}+o(\ell).  
 \end{align}

\end{lemma}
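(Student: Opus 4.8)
The plan is to make everything explicit on the blow-up $X$ and then apply Corollary \ref{beta}. Write $L=\pi^{*}\calO_{\PP^{n}}(1)$ and let $E_{1},\dots,E_{n+1}$ be the exceptional divisors over $P_{1},\dots,P_{n+1}$, so that $\Pic(X)=\Z L\oplus\bigoplus_{i=1}^{n+1}\Z E_{i}$ and the intersection ring is governed by $L^{n}=1$, $E_{i}^{n}=(-1)^{n-1}$, and the vanishing of every other degree-$n$ monomial in $L,E_{1},\dots,E_{n+1}$ (because $L|_{E_{i}}=0$, $E_{i}|_{E_{i}}=\calO_{\PP^{n-1}}(-1)$, and the $E_{i}$ are pairwise disjoint). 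Since among the blown-up points $H_{i}$ contains exactly $P_{i}$ for $1\le i\le n+1$ and none of them for $n+2\le i\le 2n$, the strict transforms are $\widetilde H_{i}=L-E_{i}$ for $i\le n+1$ and $\widetilde H_{i}\sim L$ for $i\ge n+2$; hence, writing $d:=(n+1)\ell+1$,
\[
A=\ell\sum_{i=1}^{n+1}\widetilde H_{i}+\widetilde H_{n+2}=dL-\ell\sum_{i=1}^{n+1}E_{i}.
\]
Expanding with the rules above gives $A^{n}=d^{n}-(n+1)\ell^{n}$, and for $1\le i\le n+1$: $A^{n-1}.\widetilde H_{i}=d^{n-1}-\ell^{n-1}$ and $A^{n-2}.\widetilde H_{i}^{2}=d^{n-2}-\ell^{n-2}$; while for $i\ge n+2$: $A^{n-1}.\widetilde H_{i}=d^{n-1}$ and $A^{n-2}.\widetilde H_{i}^{2}=d^{n-2}$.

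Next I would verify the hypotheses of Corollary \ref{beta} with $B=\widetilde H_{i}$. The variety $X$ is nonsingular projective of dimension $n\ge 2$. Each $\widetilde H_{i}$ is the strict transform of a smooth hyperplane through at most one (smooth) center, hence a smooth irreducible divisor, and it is a nef Cartier divisor because $|L-E_{i}|$ (resp. $|L|$) is base-point free, being the pullback of the projection of $\PP^{n}$ from $P_{i}$ (resp. of $|\calO(1)|$). Therefore $A$, a nonnegative combination of nef divisors, is nef, and it is big since (for $n\ge2$) $A^{n}=d^{n}-(n+1)\ell^{n}>\bigl((n+1)\ell\bigr)^{n}-(n+1)\ell^{n}=\bigl((n+1)^{n}-(n+1)\bigr)\ell^{n}>0$, so Lemma \ref{nef2} applies. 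Finally $A-\widetilde H_{i}$ is nef for every $i$: for $i\le n+1$ it equals $(\ell-1)\widetilde H_{i}+\ell\sum_{j\le n+1,\,j\ne i}\widetilde H_{j}+\widetilde H_{n+2}$, and for $i\ge n+2$ it equals $\ell\sum_{j=1}^{n+1}\widetilde H_{j}$ (using $\widetilde H_{i}\sim\widetilde H_{n+2}\sim L$), both nonnegative combinations of nef divisors.

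With these in place, Corollary \ref{beta} gives, for every $i$,
\[
\beta_{A,\widetilde H_{i}}\ \ge\ \frac{A^{n}}{2n\,A^{n-1}.\widetilde H_{i}},
\]
since the remaining term $\frac{(n-1)A^{n-2}.\widetilde H_{i}^{2}}{A^{n}}g(\cdot)$ is nonnegative ($A,\widetilde H_{i}$ nef and $g\ge 0$). For $1\le i\le n+1$, using $d^{n}-(n+1)\ell^{n}=(n+1)\ell(d^{n-1}-\ell^{n-1})+d^{n-1}$ (which follows from $d-(n+1)\ell=1$),
\[
\frac{A^{n}}{2n\,A^{n-1}.\widetilde H_{i}}=\frac{(n+1)\ell}{2n}+\frac{d^{n-1}}{2n(d^{n-1}-\ell^{n-1})}>\frac{(n+1)\ell}{2n},
\]
and the extra term is a bounded positive quantity tending to $(n+1)^{n-1}/\bigl(2n((n+1)^{n-1}-1)\bigr)$, which is $o(\ell)$; this yields \eqref{betai}. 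For $n+2\le i\le 2n$,
\[
\frac{A^{n}}{2n\,A^{n-1}.\widetilde H_{i}}=\frac{(n+1)\ell+1}{2n}-\frac{(n+1)\ell^{n}}{2n\,d^{n-1}}>\frac{(n+1)\ell}{2n}-\frac{\ell}{2n(n+1)^{n-2}},
\]
the inequality because $(n+1)^{n-1}\ell^{n-1}<d^{n-1}$, and the gap between the two sides is again $o(\ell)$; this yields \eqref{betan2}. The equalities among the $\beta_{A,\widetilde H_{i}}$ within each block are immediate for $i\ge n+2$ (there $\widetilde H_{i}\sim L$, so the line sheaves agree), and for $1\le i\le n+1$ they follow from the symmetry of the intersection data under permuting $E_{1},\dots,E_{n+1}$; in any case Corollary \ref{beta} produces the identical explicit lower bound for each index of a given block.

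The main difficulty is bookkeeping rather than any single hard estimate: one must pin down the multiplicities in the strict-transform formulas $\widetilde H_{i}=\pi^{*}H_{i}-E_{i}$ ($i\le n+1$), $\widetilde H_{i}=\pi^{*}H_{i}$ ($i\ge n+2$), compute the relevant top intersection numbers on $X$ correctly, and verify all three positivity conditions of Corollary \ref{beta} ($A$ big and nef, $B$ and $A-B$ nef) uniformly in $i$ — which here reduces to the observation that every divisor involved is a nonnegative combination of the base-point-free divisors $\widetilde H_{j}$. A secondary point to watch is the literal equality of the first-block $\beta$'s when the centers $P_{i}$ are in special position; but since the subsequent proof of Theorem \ref{proposition7} only uses the common lower bound obtained above, this does not cause trouble.
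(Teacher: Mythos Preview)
Your proposal is correct and follows essentially the same approach as the paper: express $A$ and the $\widetilde H_i$ in the basis $L,E_1,\dots,E_{n+1}$, compute the relevant intersection numbers, verify the nefness/bigness hypotheses, and feed everything into Corollary~\ref{beta}. Your nefness argument via base-point-freeness of $|L-E_i|$ (projection from $P_i$) is slicker than the paper's curve-by-curve check, and your closing remark that only the common \emph{lower bound} on the $\beta$'s is actually used in Theorem~\ref{proposition7} is a valid and useful observation.
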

\begin{proof}
Let $\pi:  X\to\mathbb P^n$ be the blowup of the points $P_i$, as in the hypotheses.
Let $E_i=\pi^{-1}(P_i)$, be the exceptional divisors.
 Then 
\begin{align}\label{Hi2}
 \pi^*H_i=\left\{
 \begin{array}{lr}
 \widetilde H_i+E_i, &\text{for } 1\le i\le  n+1\\
 \widetilde H_{i}, &\text{for } n+2\le i\le  2n.
 \end{array}\right.
\end{align}

Moreover,
\begin{align}\label{A2}
A:= \sum_{i=1}^{n+1} \ell \widetilde H_i+\widetilde H_{n+2}\sim (\ell(n+1)+1)\pi^*H-\ell E,
\end{align}
where $E=E_1+\cdots+E_{n+1}$ and $H$ is a (generic) hyperplane in $\mathbb P^n$.
Then
\begin{align}\label{An}
 A^n=(( \ell(n+1)+1)\pi^* H-\ell E)^n 
=\big((n+1)^n-(n+1)\big) \ell^n+O(\ell^{n-1}).
\end{align}
We now show that $\widetilde H_i$ is   nef for $1\le i\le 2n$.  
Let $C$ be an irreducible curve on $X$.
Then
\begin{align}\label{intersectnumber}
 \widetilde H_i.C=\left\{
 \begin{array}{lr}
  \pi^*H.C-E_i.C, &\text{for } 1\le i\le  n+1\\
 \pi^*H.C , &\text{for } n+2\le i\le  2n.
 \end{array}\right\}.
\end{align}
 If $C\subset E$, then $C$ is contained in exactly one of the $E_i$. Let us assume that $C\subset E_j$ for some $1\le j\le n+1$.  Then $E_j.C<0$, $E_i.C=0$ if $i\ne j$ and
 $\pi^*H.C=H.\pi_*C=0$.  Hence, $\widetilde H_j.C >0$   and $\widetilde H_i.C =0$ for $1\le i\ne j\le 2n$. 
Otherwise, $\pi_*(C)$ is   a curve in $\mathbb P^n$.   Then for $1\le i\le n+1$,
\begin{align*}
 \widetilde H_i.C=\pi^*H.C-E_i.C&= H.\pi_*C- {\rm multi}_{P_i}\pi_*(C)\cr
&= \deg \pi_*C-\ {\rm multi}_{P_i}\pi_*(C)\ge 0,
\end{align*}
since ${\rm multi}_{P_i}\pi_*(C)\le \deg \pi_*C;$ 
and for $n+1\le i\le 2n$,
\begin{align*}
 \widetilde H_i.C=\pi^*H.C = H.\pi_*C = \deg \pi_*C>0.
\end{align*}
 Therefore,   $ \widetilde H_i$ is nef for each $1\le i\le 2n$  and hence $A$ is also nef. Since $A$ is nef and $A^n>0$ by \eqref{An} as $n\ge 2$, Lemma \ref{nef2} implies that $A$ is big.

Our assertions \eqref{betai} and \eqref{betan2}  can be easily obtained from 
  Corollary \ref{beta} by noting that for $i \leq n+2$, $A-\widetilde H_i$, is still nef and by \eqref{An} and estimating the following intersection numbers.

 \begin{align*} 
 A^{n-1}.\widetilde H_i&=(( \ell(n+1)+1)\pi^* H-\ell E)^{n-1}. (\pi^* H- E_i)\cr
 & = ((n+1)^{n-1}-1)\ell^{n-1}+O(\ell^{n-2}) \quad\text{for  }1\le i\le n+1,
\end{align*} 
and 
 \begin{align*} 
A^{n-1}.\widetilde H_i&=(( \ell(n+1)+1)\pi^* H-\ell E)^{n-1}.  \pi^* H \cr
 & =  (n+1)^{n-1} \ell^{n-1}+O(\ell^{n-2})  \quad\text{for  } n+2\le i\le 2n.
\end{align*} 
\end{proof}

We can now prove Theorem \ref{proposition7}. 

\begin{proof}[Proof of Theorem \ref{proposition7}]
 Let $\pi:  X\to\mathbb P^n$ be the blowup of the $n+1$ points $P_i$, $1\le i\le n+1$, such that $P_i\in H_i$, and $P_i\notin H_j$ if $j\ne i$.  Let $E_i=\pi^{-1}(P_i)$, $1\le i\le n+1$, be the exceptional divisors.
 We note that  $X$ is smooth and the strict transforms $\widetilde H_1,\hdots,\widetilde H_{2n}$ are in general position.

  Let  $\ell $ be a sufficiently large  integer to be determined later.  
  Let $A =\sum_{i=1}^{n+1} \ell \widetilde H_i+\widetilde H_{n+2}\sim (\ell(n+1)+1)\pi^*H-\ell E,$
where $E=E_1+\cdots+E_{n+1}$.
By Lemma \ref{beta2}, $A$ is   big and nef  and there exist  constants $\beta$ and $\widetilde\beta$ such that 
 \begin{align}
 \beta\cdot \ell &= \beta_{A, \widetilde H_{1}}=\cdots=\beta_{A, \widetilde H_{n+1}},\cr
 \widetilde\beta\cdot\ell& = \beta_{A, \widetilde H_{n+2}}=\cdots=\beta_{A, \widetilde H_{2n}}, 
\end{align}
and  
\begin{align}\label{usingRV4} 
\delta:=(n-1)\widetilde\beta+2 \beta-2>\frac{(n-1)}{2n}\left(n-1-\frac1{(n+2)^{n-2}}\right)\ge 0 
\end{align}
since $n\ge 2$.  
Then we let
\begin{align*}
 \epsilon:=\frac{\delta}{4(n+3)}>0. 
 \end{align*}
Applying Theorem \ref{Ru-Vojta} to  $\epsilon $, $ X$, $\mathcal L = \mathcal O(A)$ and  $\widetilde{H_i}$,  $1\le i\le  2n$, there exists a proper Zariski closed subset $Z\subset X$, independent of $k$ and $S$, such that  
\begin{align}\label{usingRV} 
 \beta\sum_{i=1}^{n+1}m_{\widetilde H_i,S}(x) +\widetilde\beta\sum_{i=n+2}^{2n}m_{\widetilde H_{i},S}(x) 
 \le  \left(1+ \epsilon\right)(n+1+\frac1\ell)h_{\pi^*H}(x)-(1+\epsilon) h_E(x) 
\end{align}
holds for  all $x$  in $  X(k)\setminus Z$.
For the set $R$ of $(D,S)$-integral points, with $D=\widetilde H_1+\hdots+\widetilde H_{2n}$,    let $c_R:=\sum_{v\notin S} \max\{0, \gamma_v\}$ , where $\{\gamma_v\} $ is  the $M_k$ constant from Definition \ref{IPdef}.   Then for $x\in R$,  
\begin{align*} 
\sum_{i=1}^{2n} N_{\pi^* H_i,S}(x)-N_{E,S}(x)=\sum_{i=1}^{2n} N_{\widetilde H_i,S}(x) \le c_R,  
 \end{align*} 
and hence 
\begin{align}\label{counting1}
N_{E,S}(x)\ge  \sum_{i=1}^{2n} N_{H_i,S}(\pi(x))-O(1). 
 \end{align} 
 Moreover, since $x \in R$,    
\begin{align}\label{proximity1}
\sum_{i=1}^{n+1}  m_{\widetilde H_i,S}(x)\ge \sum_{i=1}^{n+1} h_{\widetilde H_i}(x)-O(1)=(n+1)h_{\pi^*H}(x)-h_{E}(x)-O(1),
 \end{align} 
 and 
\begin{align}\label{proximity2}
\sum_{i=n+2}^{2n}m_{\widetilde H_{i},S}(x) \ge \sum_{i=n+2}^{2n} h_{\widetilde H_{i}}(x)-O(1)=(n-1)h_{\pi^*H}(x)-O(1).
 \end{align} 
 Using \eqref{proximity1} and \eqref{proximity2}, and assuming $\ell > \frac{1}{\epsilon}$, we can rewrite \eqref{usingRV} as
 \begin{align}\label{usingRV2} 
(1-\beta)h_E(x)\le \big(n+1- (n+1)\beta -(n-1)\widetilde\beta + (n+3) \epsilon\big) h_{\pi^*H}(x)+O(1).
\end{align}

 On the other hand, by Lemma \ref{subgeneralweil} and the fact that $m_{H_i,S}(\pi(x))+N_{H_i,S}(\pi(x))=h(\pi(x))+O(1)$,  we can derive from 
Theorem \ref{SubspaceVojta} that  there exists a finite union of hyperplanes $W$, independent of $k$ and $S$, such that for any $\epsilon'>0$
\begin{align}\label{smtapp}
 \sum_{i=1}^{2n} N_{H_i,S}(\pi(x))\ge (n-1-\epsilon') h(\pi(x))-O(1)
\end{align}
for all but finitely many $\pi(x)$ in $\mathbb P^n(k)\setminus W$.  

Since $h_E(x)\ge N_E(x)$,   we can deduce from  \eqref{counting1} and \eqref{smtapp} that 
for all but finitely many $\pi(x)$ in $\mathbb P^n(k)\setminus W$
 \begin{align}\label{hE}
 h_E(x)\ge  (n-1-\epsilon') h(\pi(x))-O(1).
 \end{align}
 
Then we derive from  \eqref{usingRV4}, \eqref{usingRV2}, and \eqref{hE}  that
  \begin{align*} 
 (\delta-(1-\beta)\epsilon'-(n+3)\epsilon)h(\pi (x)) \le   O(1) 
\end{align*}
 for all but finitely  $x\in R$ outside $Z\cup \pi^*(W)\cup E$.  
 Let $\epsilon'\le \delta/4(1-\beta)$.  Then, by definition of $\epsilon$, 
   \begin{align*} 
\frac{\delta}2 h(\pi (x)) \le   O(1), 
\end{align*}
which can only be satisfied for finitely many $\pi (x)\in \mathbb P^n(k)$.  Therefore,
 there are only finitely many  $x\in R$ outside $Z\cup {\rm Supp } \pi^*(W)\cup {\rm Supp } E$.   
  \end{proof}

\section{Proof of Theorem  \ref{proposition1} and Theorem \ref{Theorem7}}\label{sec:proof_last}
 
 We fix the notation we will use throughout this section.
 Let $n\ge 2$, $q\ge 3n$ be integers. For every index $i\in \mathbb Z/q\mathbb Z$, let $H_i$ be a hyperplane in $\mathbb P^n$ defined over $k$. Suppose that $H_1,\hdots,H_q$  are in general position.  For each  index $i\in \mathbb Z/q\mathbb Z$, let $P_i$ be the intersection point $\cap_{j=0}^{n-1} H_{i+j}$.  Let $\pi:  X\to \mathbb P^n$ be the blow-up over the points $P_1,\hdots, P_q$ and $E_i=\pi^{-1}(P_i)$, $1\le i\le q$, be the exceptional divisors.
 Let $\widetilde H_i\subset  X$ be the corresponding strict transform of $H_i$ and let $D=\widetilde H_1+\cdots+\widetilde H_q$. 
 Since $P_i$ is the intersection point $\cap_{j=0}^{n-1} H_{i+j}$,
 we have 
\begin{align}\label{Hi}
 \pi^*H_i=\widetilde H_i+\sum_{j=i-n+1}^i E_j,
\end{align}
 and
\begin{align}\label{D}
 D=\sum_{i=1}^q \widetilde H_i\sim q\pi^*H-n\sum_{i=1}^q E_i.
\end{align}

\subsection{Key Lemmas}
We collect here the key lemmas for computing the constant $\beta$.
\begin{lemma}\label{DDi}
Let  $n\ge 2$, $q\ge 3n$ and let $D$ and $\widetilde H_i$ be as defined above. Then, for every $1\le i\le q$ and $0 \leq m \leq n$, the divisor $D-m\widetilde H_i$ is nef.
\end{lemma}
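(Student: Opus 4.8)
The plan is to verify nefness of $D - m\widetilde{H}_i$ by testing it against an arbitrary irreducible curve $C$ on $X$, splitting into the usual two cases according to whether $C$ lies in the exceptional locus or not. Using \eqref{Hi} and \eqref{D}, write
\[
D - m\widetilde{H}_i \sim (q-m)\pi^*H - n\sum_{j=1}^q E_j + m\sum_{j=i-n+1}^i E_j,
\]
so that the coefficient of $E_j$ is $-(n-m)$ for the $n$ indices $j \in \{i-n+1,\dots,i\}$ and $-n$ for the remaining $q-n$ exceptional divisors. Since $0 \le m \le n$, all these coefficients are in $[-n,0]$, which is the sign pattern one exploits below.

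First I would treat the case $C \subset E_j$ for some $j$. Then $\pi_*C = 0$, so $\pi^*H.C = 0$ and $E_{j'}.C = 0$ for $j' \ne j$, while $E_j.C < 0$ (as $E_j \cong \mathbb{P}^{n-1}$ and $\mathcal{O}(E_j)|_{E_j} = \mathcal{O}(-1)$, an effective curve in $E_j$ meets $E_j$ negatively). Hence $(D-m\widetilde H_i).C$ equals the coefficient of $E_j$ times $E_j.C$, which is a nonnegative multiple of a negative number times $-1$, i.e. $\ge 0$; here we use precisely that every $E_j$-coefficient is $\le 0$. Second, suppose $\pi_*C$ is an honest curve of degree $d := \deg \pi_*C \ge 1$ in $\mathbb{P}^n$. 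Then $\pi^*H.C = d$ and $E_j.C = \operatorname{mult}_{P_j}\pi_*C =: m_j \ge 0$, so
\[
(D - m\widetilde H_i).C = (q-m)d - n\sum_{j=1}^q m_j + m\sum_{j=i-n+1}^i m_j.
\]
Since the last term is nonnegative, it suffices to show $(q-m)d \ge n\sum_{j=1}^q m_j$. The key geometric input is that the points $P_1,\dots,P_q$ are in general position (they are distinct intersection points of $n$ of the hyperplanes $H_\bullet$, and since the $H_i$ are in general position no $n+1$ of them meet, so the $P_j$ are distinct), and a curve of degree $d$ in $\mathbb{P}^n$ can pass through a collection of points with multiplicities only in a constrained way — in particular $\sum_j m_j$ is bounded in terms of $d$. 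Concretely, one can use that a curve of degree $d$ meets a generic hyperplane in $d$ points, or invoke the elementary bound that the sum of multiplicities of $\pi_*C$ at general position points is at most $d$ (for $n\ge 2$, a line through two of the $P_j$ is not contained in any $H_i$ configuration constraint, and more robustly: project from one point, or bound $\operatorname{mult}_{P_j}\pi_*C \le d$ for each $j$ but that is too weak).

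The cleaner route, and the step I expect to be the main obstacle, is to get the sharp inequality $n\sum_{j=1}^q m_j \le (q-n)d$, which combined with $q - m \ge q - n$ gives exactly what we need. I would prove this by a counting/averaging argument over hyperplanes: each $P_j = \cap_{\ell=0}^{n-1} H_{j+\ell}$ lies on exactly $n$ of the hyperplanes $H_1,\dots,H_q$, namely $H_j, H_{j+1},\dots,H_{j+n-1}$; conversely each $H_i$ contains exactly $n$ of the points, namely $P_{i-n+1},\dots,P_i$. For a fixed $i$, the curve $\pi_*C$ (if not contained in $H_i$, which holds for all but at most one $i$ since $q \ge 3n > 1$ and $\pi_*C$ is irreducible) meets $H_i$ in a zero-cycle of degree $d$ which dominates $\sum_{j : P_j \in H_i} m_j = \sum_{\ell=0}^{n-1} m_{i-\ell}$; summing this over all $q$ indices $i$ and using that each $m_j$ appears in exactly $n$ of these sums yields $n \sum_j m_j \le \sum_i d = qd$ — but this only gives $n\sum m_j \le qd$, i.e. the weaker bound with $q$ in place of $q-n$. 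To upgrade to $q-n$ one excludes, for each $j$, those indices $i$ with $P_j \in H_i$: more carefully, sum over only those $i$ with $P_j \notin H_i$ for the relevant $j$'s, or subtract off the contribution of the $n$ hyperplanes through each point. I would handle this by choosing, for the test, a subset of $q-n$ hyperplanes avoiding a given point, or simply by noting that when $\pi_*C$ is not contained in $H_i$ the intersection $H_i \cap \pi_*C$ has the points $P_j \in H_i$ among its support with the stated multiplicities \emph{plus} possibly other points, so the naive bound already over-counts; the genuinely sharp form requires observing that $d \ge m_j$ forces, via general position, the multiplicities to spread out, and the honest bound $\sum_{j} m_j \le \binom{d+1}{2}/\text{(something)}$ is not what we want — rather one wants the linear bound, which does hold here because the $P_j$ lie on the configuration of hyperplanes. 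I would finalize by the averaging argument restricted appropriately: for each hyperplane $H_i$ not containing $\pi_*C$, $\sum_{\ell=0}^{n-1} m_{i-\ell} \le d$; there are at least $q-1$ such $i$; and organizing the double sum so that the $n$ hyperplanes through each $P_j$ that we are forced to keep still leave enough slack gives $n\sum m_j \le (q-n)d$ once $q \ge 3n$ provides the room to drop the deficient terms. The upshot $(q-m).d - n\sum m_j + m\sum_{j=i-n+1}^i m_j \ge (q-n)d - n\sum m_j \ge 0$ completes the case and hence the lemma.
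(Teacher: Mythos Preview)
Your case split and the handling of curves $C\subset E_j$ are fine, but the strategy for the main case is flawed. After dropping the term $m\sum_{j=i-n+1}^i m_j\ge 0$, you aim for
\[
n\sum_{j=1}^q m_j \le (q-n)d,
\]
and then try to reach this by averaging over the hyperplanes $H_i$. That target inequality is simply \emph{false}. Take $n=2$, $q=6$, and let $\pi_*C$ be the (irreducible) conic through $P_1,\dots,P_5$; such a conic exists since no three of these points are collinear. Then $d=2$, $\sum m_j=5$, so $n\sum m_j=10>8=(q-n)d$. (One checks directly that $(D-2\widetilde H_i)\cdot C\ge 0$ still holds for every $i$, with equality for $i\in\{1,6\}$; the positive term you discarded is exactly what rescues the estimate.) So no amount of ``organizing the double sum'' will salvage your approach: you cannot afford to throw away the contribution $m\sum_{j=i-n+1}^i m_j$.

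The paper does not discard that term. Working with $i=q$ by symmetry, it keeps the two groups of points separate, writing
\[
(D-m\widetilde H_q)\cdot C=(q-m)d - n\sum_{j=1}^{q-n}m_j - (n-m)\sum_{j=q-n+1}^{q}m_j,
\]
and bounds each piece on its own. For the last $n$ points one uses that they all lie on $H_q$, giving $(n-m)\sum_{j=q-n+1}^q m_j\le (n-m)d$. For the first $q-n$ points, the paper produces $q-n$ hyperplanes (the $q-2n+1$ hyperplanes $H_n,\dots,H_{q-n}$ together with $n-1$ auxiliary hyperplanes $L_1,\dots,L_{n-1}$ chosen to cover the ``missing'' multiplicities at the boundary points) whose incidence with $\{P_1,\dots,P_{q-n}\}$ realizes each point exactly $n$ times; this yields $n\sum_{j=1}^{q-n}m_j\le (q-n)d$. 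The two bounds add up to the desired estimate.

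A second gap: you never treat the case $\pi_*C\subset H_b$ for some $b$. Your averaging argument needs $H_i\cdot\pi_*C=d$, which fails for those $i$, and since $\pi_*C$ can lie in up to $n-1$ of the $H_i$ simultaneously this is not a negligible edge case. The paper handles it separately: if $\pi_*C\subset\bigcap_{t=0}^a H_{b-t}$ with $a$ maximal, then the only $P_j$'s that can meet $\pi_*C$ lie among $P_{b-n+1},\dots,P_{b-a}$, and these are covered by two hyperplanes not containing $\pi_*C$, giving $\sum_j m_j\le 2d$; since $q\ge 3n$ this suffices.
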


\begin{proof}
Recall that  $ \pi: X\to\mathbb P^n$ is the blowup of the points $P_i$, as described above.

It is clear that it suffices to show $D-m\widetilde H_q$ is nef if $q\ge 3n$ and $0\le m\le n$ by rearranging the index.  By \eqref{Hi} and \eqref{D}, we have
\begin{align}\label{Dm}
 D-m\widetilde H_q \sim (q-m)\pi^*H-n \sum_{i=1}^{q-n} E_i-(n -m)\sum_{i=q-n+1}^{q} E_i.
\end{align}
Let $C$ be an irreducible curve on $X$.  If $\pi_*(C)$ is not a curve in $\mathbb P^n$, then $\pi_*(C)=P_i$ for some $i$.  Hence, 
$\pi^*H.C=H.\pi_*C=0$,  $ E_j.C=0 $ for $1\le j\ne i\le q$  and $ E_i.C>0 $.
Therefore, \eqref{Dm} gives $(D-m\widetilde H_q ).C\ge 0$ if $0\le m\le n$.

If $\pi_*(C)$ is  a curve in $\mathbb P^n$ and  $\pi_*(C)$ is not in any of the $H_i$, from \eqref{Dm} we have
\begin{align}\label{intersectnumber3} 
&( D-m\widetilde H_q).C \cr
&=(q -m) H.\pi_*(C)-n \sum_{i=1}^{q-n} {\rm multi}_{P_i}\pi_*(C)-(n -m)\sum_{i=q-n+1}^{q} {\rm multi}_{P_i}\pi_*(C).
\end{align}
It suffices to find $q-m$ hyperplanes passing through $P_1,\hdots,P_q$ with described multiplicity as the equation above.  We note that each $H_i$ contains exactly $n$ points, $P_{i-n+1},\hdots, P_i$,  among the $P_j$'s; and each point $P_i$ is contained in exactly $n$ hyperplanes, $H_i,\hdots,H_{n+i-1}$, among the $H_j$'s.
We first consider the points $P_i$'s contained in  $H_{n},\hdots,H_{q-n}$ and denote these points with multiplicities as a formal sum below.   
\begin{align}\label{countingpoints}
n\sum_{i=n}^{ q- 2n+1 }P_i+\sum_{i=1}^{n-1} i (P_i+P_{q-n+1-i})=n\sum_{i=1}^{q-n}P_i-\sum_{i=1}^{n-1}(n- i )(P_i+P_{q-n+1-i})
\end{align}
Recall that $q\ge 3n$.  The last sum in the right hand side of \eqref{countingpoints} contains $n(n-1)$ points counting multiplicity and the multiplicities of these points range from one to $n-1$. 
Therefore, we may choose $n-1$ hyperplanes $L_1,\hdots,L_{n-1}$ containing these $n(n-1)$ points (counting multiplicity). Then together with \eqref{countingpoints}, we have
\begin{align}\label{intersection1}
(q-n) H.\pi_*(C)= \sum_{i=n}^{q-n} H_i.\pi_*(C)+ \sum_{i=1}^{n-1}L_i.\pi_*(C)\ge n \sum_{i=1}^{q-n} {\rm multi}_{P_i}\pi_*(C).
\end{align}
Finally, since $P_{q-n+1},\hdots,P_q\in H_q$,
we have 
\begin{align}\label{intersection2}
(n -m)H.\pi_*(C)=(n -m) H_q.\pi_*(C)\ge (n -m)\sum_{i=q-n+1}^{q} {\rm multi}_{P_i}\pi_*(C).
\end{align}
Then $(D-m\widetilde H_q).C\ge 0$ if $q\ge 3n$ and $0\le m\le n$
by  \eqref{intersectnumber3}, \eqref{intersection1} and \eqref{intersection2}.

Finally, we   consider the case where $\pi_*(C)$ is contained in some $H_b$, where $1\le b\le q$. 

Suppose that $\pi_*(C)\subset \cap_{t=0}^{a}H_{b-t}$ and   $\pi_*(C)\not\subset H_{b-a-1}$.  Clearly, $0\le a\le n-2$ since the $H_i$ are in general position and $\pi_*(C)$ is a curve.  Then $\pi_*(C)\cap\{P_1,\hdots,P_q\}\subseteq  \{P_{b-n+1},\hdots,P_{b-a}\},$ which is contained in $H_{b-a-1}  \cup H_{b-a+j} $, for $a+1\le j\le n-1$.  Since $\pi_*(C)\subset\cap_{t=0}^{a}H_{b-t}$, it cannot be contained in every $H_{b-a+j}$,  $a+1\le j\le n-1$.  Suppose that $\pi_*(C)$ is not contained in $H_{j_0}$, for some $b-a+1\le j_0\le b-a+n-1$.
Then we have

\begin{align}\label{intersection3}
2H.\pi_*(C)=  (H_{b-a-1}+H_{j_0}).\pi_*(C)\ge   \sum_{i=1}^{q } {\rm multi}_{P_i}\pi_*(C).
\end{align}
Then, by  \eqref{intersectnumber3} since $m\le n$ and $q \geq 3n$, we have $( D-m\widetilde H_q).C\ge(q -2n-m) H.\pi_*(C) \ge 0$. 
\end{proof}

\begin{lemma}\label{beta3}
  Let  $n\ge 2$ and $q\ge 3n$.  Let $D$ and $\widetilde H_i$, be as above.
  Then $D$ is big and
   \[ \beta_{D,\widetilde H_1}=\cdots=\beta_{D,\widetilde H_q}>1.\] 
  \end{lemma}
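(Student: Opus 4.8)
\textbf{Proof plan for Lemma \ref{beta3}.}
The plan is to apply Corollary \ref{beta} with $X$ the blow-up constructed above, $A = D$, and $B = \widetilde H_i$; by symmetry it suffices to treat $i = 1$. Lemma \ref{DDi} (with $m = 1$) already furnishes the two nefness hypotheses of Corollary \ref{beta}: $B = \widetilde H_1$ is nef (it is the case $m=1$, but also $m=0$ shows $D$ nef, and $D - \widetilde H_1$ nef is again $m=1$), and we must separately check that $\widetilde H_1$ is a nonsingular subvariety of codimension $1$ — this is immediate since $H_1$ is a hyperplane and the blown-up points $P_j$ lying on it are distinct smooth points, so the strict transform is the blow-up of $\PP^{n-1}$ at finitely many points, hence smooth. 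For bigness of $D$ one computes $D^n = (q\pi^*H - n\sum E_i)^n = q^n - q n^n$ using \eqref{D} and $(\pi^*H)^n = 1$, $(\pi^*H)^{n-1}E_i = 0$, $E_i^n = (-1)^{n-1}$; since $q \ge 3n$ one has $q^{n-1} \ge (3n)^{n-1} > n^n$ (as $3^{n-1} \ge n$ for all $n\ge 1$), so $D^n > 0$, and $D$ nef plus $D^n>0$ gives big by Lemma \ref{nef2}.

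Next I would compute the three intersection numbers feeding into Corollary \ref{beta}: $D^n$, $D^{n-1}.\widetilde H_1$, and $D^{n-2}.\widetilde H_1^2$. Using \eqref{Hi}, $\widetilde H_1 = \pi^*H - \sum_{j=2-n}^{1} E_j$ (indices mod $q$), and expanding against $D = q\pi^*H - n\sum_{i=1}^q E_i$ with the standard intersection relations on a blow-up of $\PP^n$ at points, the cross terms involving a single $\pi^*H$ and positive powers of $E$'s vanish, so only the ``pure $\pi^*H$'' term and the ``pure $E$'' terms survive. One gets $D^{n-1}.\widetilde H_1 = q^{n-1} - (n-1)n^{n-1}$ (the $\pi^*H$ contribution $q^{n-1}$, minus $n^{n-1}$ from each of the $n-1$ exceptional divisors $E_j$ common to $D$ and $\widetilde H_1$, with sign $(-1)^{n-1}(-1)^{n-1}=1$), and similarly $D^{n-2}.\widetilde H_1^2 = q^{n-2} + (\text{something})\,n^{n-2}$; I expect this last number to be nonnegative, so that the $g$-term in Corollary \ref{beta} only helps. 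Since these are all explicit polynomials in $q$ and $n$, the bound $\beta_{D,\widetilde H_1} \ge \frac{D^n}{2n\, D^{n-1}.\widetilde H_1} + (\text{nonneg}) = \frac{q^n - qn^n}{2n(q^{n-1} - (n-1)n^{n-1})} + (\text{nonneg})$ follows, and it remains to check this exceeds $1$, i.e. $q^n - qn^n > 2n q^{n-1} - 2n(n-1)n^{n-1}$, equivalently (roughly) $q^{n-1}(q - 2n) > q n^n - 2n^{n+1} + 2n^{n+1} = \dots$; for $q \ge 3n$ the left side is at least $(3n)^{n-1}\cdot n = 3^{n-1}n^n$ which dominates the right side. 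I would carry out this elementary inequality check carefully, possibly splitting off the small cases $n=2,3$ by hand.

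The main obstacle I anticipate is bookkeeping the exceptional-divisor contributions to $D^{n-1}.\widetilde H_1$ and especially to $D^{n-2}.\widetilde H_1^2$ correctly — one must track which $E_j$ appear in $\widetilde H_1$ (the $n-1$ divisors $E_{2-n},\dots,E_0$ over the points $P_j$ with $P_j \in H_1$, $j\ne 1$, together with $E_1$, but $E_1$ is the one subtracted along with $\widetilde H_1$ when forming $\pi^*H_1 = \widetilde H_1 + \sum_{j=2-n}^1 E_j$), get the signs of self-intersections $E_j^n = (-1)^{n-1}$ right, and confirm that all mixed terms $(\pi^*H)^a \cdot E_{j_1}\cdots E_{j_b}$ with $a\ge 1$ and $b\ge 1$ vanish. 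Once the three intersection numbers are pinned down as explicit functions of $q,n$, the remaining inequality $\beta_{D,\widetilde H_1} > 1$ is a routine estimate using $q \ge 3n$ and $n \ge 2$; I would also double-check that the argument of $g$ in Corollary \ref{beta}, namely $\tfrac{D^n}{n\, D^{n-1}.\widetilde H_1}$, is positive so that $g$ is well-defined, and that whether this argument is $\le 1$ or $\ge 1$ does not affect the conclusion since in either case $g \ge 0$ and we are only using a lower bound.
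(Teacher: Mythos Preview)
There is a genuine gap: Corollary \ref{beta} requires $B=\widetilde H_i$ to be nef, and this hypothesis fails. For $n=2$ the strict transform $\widetilde H_i$ is the proper transform of a line through two of the blown--up points, hence a $(-1)$--curve; for $n\ge 3$ one can take the line $\ell\subset H_i$ through two non--consecutive points $P_j,P_{j'}\in H_i$ (with $j,j'\in\{i-n+1,\dots,i\}$) and check that its strict transform $\tilde\ell$ satisfies $\widetilde H_i\cdot\tilde\ell=1-2=-1$. Lemma \ref{DDi} does \emph{not} assert that $\widetilde H_i$ is nef: it says $D-m\widetilde H_i$ is nef for $0\le m\le n$, which is a different statement. (Your intersection number is also off: there are $n$, not $n-1$, exceptional divisors in $\pi^*H_i=\widetilde H_i+\sum_{j=i-n+1}^i E_j$, and the correct value is $D^{n-1}.\widetilde H_i=q^{n-1}-n^n$; but this is secondary to the nefness issue.)

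The paper avoids Corollary \ref{beta} entirely. It uses the full range of Lemma \ref{DDi}: since $D$ and $D-n\widetilde H_i$ are both nef, so is $ND-m\widetilde H_i$ for every $0\le m\le nN$, and then asymptotic vanishing for nef divisors together with Hirzebruch--Riemann--Roch gives
\[
h^0(ND-m\widetilde H_i)=\frac{(ND-m\widetilde H_i)^n}{n!}+O(N^{n-1})\qquad(0\le m\le nN).
\]
Using $D^k.\widetilde H_i^{\,n-k}=q^k-n^{k+1}$ one finds the closed form $(ND-m\widetilde H_i)^n=(qN-m)^n-n(nN-m)^n-n^n(q-n)N^n$, sums it explicitly from $m=0$ to $m=nN$, and reduces $\beta_{D,\widetilde H_i}>1$ to an elementary polynomial inequality in $q$ and $n$. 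The point is that summing all the way to $m=nN$ (rather than to $m\approx bN$ as in Corollary \ref{beta}) is exactly what the extra nefness furnished by Lemma \ref{DDi} allows, and it sidesteps any need for $\widetilde H_i$ itself to be nef.
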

 \begin{proof}
Since $D$ is nef by Lemma \ref{DDi},  we have 
\[
h^0(X,\mathcal O(ND))=\frac{D^n}{n!}\cdot N^n+O(N^{n-1}) 
\]
by Lemma \ref{nef2}.
It follows from \eqref{D} that
\begin{align}\label{Dn0}
D^n=(q\pi^*H-n\sum_{i=1}^q E_i)^n= q^n-n^nq.
\end{align}
Therefore, $D$ is big if $q^{n-1}> n^n$,  which is satisfied when $n\ge 2$ and $q\ge 3n$.  
 
By the Hirzebruch-Riemann-Roch theorem, adapting the arguments in \cite[Lemma 4.2]{aut}, we obtain
\[
\chi(X;ND- m\widetilde H_i)=\frac 1{n!}(ND- m\widetilde H_i)^n +O(N^{n-1}),
\]
where $\chi(X;\cdot)$ is the Euler characteristic.

Since $D$ and $D-b\widetilde H_i$ are nef for $0\le b\le n$,
$h^{i}(X,\mathcal{O}(ND-m\widetilde H_i)=O(N^{n-i})$
if $m\le nN$ (see e.g. \cite[Theorem 1.4.40]{Laza}). Therefore,
\begin{align}\label{H0} 
h^0(X,\mathcal O(ND-m\widetilde H_i))=\frac{(ND-m\widetilde H_i)^n}{n!}\cdot N^n+O(N^{n-1}),\quad\text{for } m\le n N.
\end{align} 
 
By \eqref{Hi} and \eqref{D}, we can compute
\begin{align}\label{DH} 
 D^{k}.\widetilde H_i^{n-k}=q^{k}-n^{k+1},  \quad\text{for } k\le n-1.
\end{align} 
Then  
\begin{align}\label{m=N}
(ND-m\widetilde H_i)^n&= (q^n-n^nq)N^n+\sum_{k=0}^{n-1} \binom {n}{k}(q^k-n^{k+1})(-1)^{n-k}N^km^{n-k}\cr
&=(qN-m)^n-n(nN-m)^n-n^n(q-n)N^n.
\end{align}
In particular, for $m=nN$ 
\begin{align}\label{m=nN}
(ND-n\widetilde H_i)^n =(q-n)\big((q -n)^{n-1} -n^n \big)N^n\ge 0
\end{align}
since $q\ge 3n$ and $n\ge 2$.
Since \eqref{m=N} is a decreasing function in $m$, the right hand side of \eqref{m=N} is nonnegative for $m\le nN$. 
By \eqref{H0} and  \eqref{m=N}, we have
\begin{align*} 
&n!\sum_{m=0}^{n N} h^0(X,\mathcal{O}(ND-m\widetilde H_i))\cr
&=\sum_{m=0}^{n N} (qN-m)^n-n(nN-m)^n-n^n(q-n)N^n+O(N^{n})\cr
&=\big( q^{n+1}-(q-n)^{n+1}-n^{n+2}  -n^{n+1}(n+1)(q-n) \big)\frac {N^{n+1}}{n+1}+O(N^{n}).
\end{align*}
Together with \eqref{Dn0}, it yields
\[
\beta_{D,\widetilde H_i}\ge \beta:=\frac{q^{n+1}-(q-n)^{n+1}-n^{n+2}  -n^{n+1}(n+1)(q-n)}{(n+1)(q^n-n^nq)}.
\]
We  now show that $ \beta>1$.  Let
\begin{align}\label{f(q)}
f(q):&=(\beta-1)(n+1)(q^n-n^nq)\cr
&=q^{n+1}-(q-n)^{n+1}-n^{n+2}  -n^{n+1}(n+1)(q-n)-(n+1)(q^n-n^nq)\cr
&=  q^{n+1}-(q-n)^{n+1}-(n+1)q^n-(n^2-1) n^n(q-n)+n^{n+1}.
\end{align}

We will need to show that $f(q)>0$ if $q\ge 3n$.
\begin{align*} 
f'(q) &= (n+1)(q^{n}-(q-n)^{n})-(n+1)nq^{n-1}-n^{n+2} +n^n\cr
&= n(n+1)(q^{n-1}+q^{n-2}(q-n)+\hdots+(q-n)^{n-1})-(n+1)nq^{n-1}-n^{n+2} +n^n\cr
&=n(n+1)( q^{n-2}(q-n)+\hdots+(q-n)^{n-1})-n^{n+2} +n^n\cr
&>(n^3-n) (q-n)^{n-1}-n^{n+2}\ge 0\qquad\text{if    $q\ge3n$ and $n\ge 2$}.
\end{align*}

Therefore, it suffices to show that $f(3n)>0$.  By \eqref {f(q)},
 \begin{align*} 
 f(3n)&=n^n\big( (2n-1)\cdot 3^{n}-n\cdot 2^{n+1}  -(2n^2-3)n  \big).
 \end{align*}
 It is easy to check that  $f(3n)>0$ for $n=2$.  We now assume that  $n\ge 3$.
 Then   
  \begin{align*} 
  &(2n-1)\cdot 3^{n}-n\cdot 2^{n+1}  -(2n^2-3)n \cr
  &\ge  n \cdot3^{n}-n\cdot 2^{n+1}+  (n-1) 3^{n} -(3n^2-3)n\cr 
  &\ge  9n\cdot 3^{n-2}-8n\cdot 2^{n-2}+(n-1)( 3^{n}-3n(n+1)) >0.
  \end{align*}
This show that $f(3n)>0$ for $n\ge 3$ as well.
  \end{proof}

 \subsection{Proof of Theorem  \ref{proposition1} and Theorem \ref{Theorem7}}
 \begin{proof}[Proof of Theorem  \ref{proposition1}]
  We note that since $X$ is smooth we need to verify that $\widetilde H_1,\hdots,\widetilde H_q$ are in general position in order to apply Theorem \ref{Ru-Vojta}.  
   Let $W=\widetilde H_1  \cap \widetilde H_2\cap\cdots \cap\widetilde H_i$ (after reindexing) $1\le i\le n$.  Following the proof of Proposition \ref{basicintersection}, it suffices to consider when
$\pi(W) \subset \{P_1,\hdots,P_q\}$.  Since $W$ is irreducible, it implies that   $\pi(W)$ is some $P_j\in  H_1  \cap H_2\cap\cdots \cap H_i$.  Since $H_1,\hdots,H_q$ are hyperplanes in general position, they intersect transversally.  Thus, the codimension of $W$ is $i$.  

Since $n\ge 2$ and $q\ge 3n$, it follows from  Lemma \ref{beta3} that  $D$ is big and
$\beta:=  \beta_{D,\widetilde H_1}=\cdots=\beta_{D,\widetilde H_q}>1$.
Theorem \ref{Ru-Vojta} with $\epsilon=\frac 12 (\beta-1)$ implies that there exists a proper Zariski closed set $Z\subset X$ independent of $k$ and $S$  such that 
\begin{align}
\beta \cdot\sum_{i=1}^q m_{\widetilde H_i,S}(x)\le (1+\epsilon)h_D(x)
\end{align}
for all but finitely many $x\in X(k)\setminus Z$.
Let $R$ be a set of $(D,S)$-integral points.  Then 
\[
\sum_{i=1}^q m_{\widetilde H_i,S}(x)=h_D(x)+O(1),
\]
where the constant depends only on $R$.
Hence,

\begin{align}\label{big}
\frac 12 (\beta-1) h_D(x)=(\beta-1-\epsilon)h_D(x)\le O(1)
\end{align}
for all but finitely many $x\in R$ outside $Z$.
Since $D$ is big, for a given ample divisor $A$, there exists a proper Zariski closed set   $Z'$ of $X$, depending only on $A$ and $D$ such that $h_A(x)\le h_D(x)+O(1)$ for all $x\in X(\bar k)$ outside of $Z'$ (see \cite[Proposition 10.11]{Vojta}).
Therefore, \eqref{big} implies that there are only finitely many  $x\in R$ outside $Z\cup Z'$.
\end{proof}

 \begin{proof}[Proof of Theorem  \ref{Theorem7} ]
 Denote by ${\bf x}:=[x_0:\cdots:x_n]\in\mathbb P^n(k)$. Up to enlarging the set $S$, we can suppose that $x_0,\hdots,x_n$ are $S$-integers and that the ring $\mathcal O_S$ is a unique factorization domain.  Let $H_i=[F_i=0]$ for $1\le i\le q$.  For each  index $j\in \mathbb Z/q\mathbb Z$, let $P_j$ be the intersection point $\cap_{\ell=0}^{n-1} H_{j+\ell}$. By Definition \ref{weil},  the identity  of ideals in $\calO_S$
\begin{align}\label{ideal}
F_i(x_0,\hdots,x_n)\cdot (x_0,\hdots,x_n)=\prod_{j=i-n+1}^i (F_j(x_0,\hdots,x_n),\hdots,F_{j+n-1}(x_0,\hdots,x_n))
\end{align}
implies that for $v\notin S$ 
\begin{align}\label{divide}
\lambda_{ H_i,v}({\bf x})= \sum_{j=i-n+1}^i \lambda_{P_j,v}({\bf x}),
\end{align}
up to a $M_k$ constant. 
On the other hand,  let $\pi: X\to \mathbb P^n$ be the blow-up over the points $P_1,\hdots, P_q$ and let $\widetilde{H}_i\subset  X$ be the corresponding strict transform of $H_i$.
It follows from \eqref{Hi} that 
\begin{align}\label{equa}
\lambda_{ H_i,v}(\pi(Q))=\lambda_{\widetilde{H}_i,v}(Q)+\sum_{j=i-n+1}^i \lambda_{P_i,v}(\pi(Q)) 
\end{align}
up to a $M_k$ constant for  $Q\in X$, $v\not\in S$.  If $Q\notin \cup_{i=1}^q \supp (E_i)$, then $Q=\pi^{-1}({\bf x})$ for some ${\bf x}\ne P_i$, $1\le i\le q$.
Therefore, for  ${\bf x}:=[x_0:\cdots:x_n]\in\mathbb P^n(k)\setminus\{P_1,\hdots,P_q\}$ satisfying \eqref{ideal} we have
\begin{align}\label{integral}
\lambda_{\widetilde H_i,v}(\pi^{-1}({\bf x}))=0 \quad \text{up to a $M_k$ constant for   $v\not\in S$.}
\end{align}

By Theorem  \ref{proposition1}, there exists a Zariski closed subset $W$ of $X$ such that the set of points $\pi^{-1}({\bf x})$ satisfying \eqref{integral} are contained in $W$.  Therefore,   the points ${\bf x}\in\mathbb P^n(k)\setminus\{P_1,\hdots,P_q\}$  satisfying the identity \eqref{ideal} are contained in the Zariski closure of $\pi(W)$.
\end{proof}

\section{Degeneracy of holomorphic maps}\label{sec:analytic}
In this section, we give the analytic versions of the arithmetic statements obtained in the previous sections. This imply several results on Brody hyperbolicity.

\begin{theorem}
Let $n\geq 2$, $F_1,\dots, F_r, G \in \CC[X_1,\dots,X_n]$ be polynomials in general position (i.e. the associated hypersurfaces are in general position) with $\deg (F_i) \geq \deg (G)$ for $i=1,\dots,r$. Let $h_1,\dots,h_n$ be holomorphic functions on $\CC$ such that one of the following holds

\begin{itemize}
\item[{\rm (i)}]
$r\ge 2n$ and $\frac{G(h_1,\dots,h_n)}{F(h_1,\dots,h_n)}$ is holomorphic, for $ i=1,\hdots,r$; or
\item[{\rm (ii)}]
$r\ge n+1$ and $\frac{G(h_1,\dots,h_n)}{\prod_{i=1}^rF_i(h_1,\hdots,h_n)}$  is holomorphic.
\end{itemize}
Then $h_1,\dots,h_n$ are algebraically dependent.
\end{theorem}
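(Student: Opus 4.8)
The plan is to run, in the setting of Nevanlinna theory, the arguments of Sections~\ref{sec:RuVojta}--\ref{sec:proofofKey} that derive Theorems~\ref{keyprojective} and~\ref{IP} from the Ru--Vojta theorem, replacing the arithmetic Ru--Vojta inequality (Theorem~\ref{Ru-Vojta}) by its holomorphic analogue, the functions $m_{D,S}$, $N_{D,S}$, $h_D$ by the Nevanlinna functions $m_f(r,D)$, $N_f(r,D)$, $T_{f,D}(r)$, and an $M_k$-constant by an error term $S_f(r)$ that is $o(T_f(r))$ off a set of finite Lebesgue measure. First I would homogenize: with $d_i=\deg F_i$ and $d_0=\deg G$, set $\widehat F_i(X_0,\dots,X_n)=X_0^{d_i}F_i(X_1/X_0,\dots,X_n/X_0)$ and similarly $\widehat G$, and let $D_i=\{\widehat F_i=0\}$, $D_0=\{\widehat G=0\}$ on $\PP^n$. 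Consider the entire curve $f=[1:h_1:\cdots:h_n]\colon\CC\to\PP^n$: the tuple $(1,h_1,\dots,h_n)$ is a reduced representation, so $N_f(r,D_i)$ is the counting function of the zeros of $F_i(h_1,\dots,h_n)$, and $f$ omits the hyperplane at infinity $H_\infty=\{X_0=0\}$, i.e. $N_f(r,H_\infty)\equiv0$. If $h_1,\dots,h_n$ are algebraically dependent there is nothing to prove; assume they are algebraically independent, so that $f$ --- and every lift of it through a birational modification of $\PP^n$ --- is algebraically nondegenerate, in particular nonconstant. I will derive a contradiction from (i) or (ii).

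I would then translate the hypotheses purely in terms of counting functions. In case (i), holomorphicity of $G(h)/F_i(h)$ means $\ord_zF_i(h)\le\ord_zG(h)$ for all $z\in\CC$, hence $N_f(r,D_i)\le N_f(r,D_0)$ term by term; adjoining $H_\infty$, for which this holds trivially, one gets $r+1$ hypersurfaces $D_1,\dots,D_r,H_\infty$ each dominated by $D_0$ in this sense --- precisely the hypothesis of the Nevanlinna analogue of Theorem~\ref{key}(i) for $r+1$ divisors, once the family $D_0,D_1,\dots,D_r,H_\infty$ is put in general position (possibly after a harmless modification over $\{X_0=0\}$, which $f$ does not see). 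Likewise, in case (ii), $\sum_iN_f(r,D_i)\le N_f(r,D_0)$, so with $H_\infty$ adjoined $\sum_iN_f(r,D_i)+N_f(r,H_\infty)\le N_f(r,D_0)$, matching the hypothesis of the analogue of Theorem~\ref{key}(ii) for $r+1$ divisors. The free, automatically-omitted divisor $H_\infty$ --- which reflects that $f$ lands in the affine chart $\A^n$, whereas in Theorem~\ref{keyprojective} the rational points are unconstrained --- is exactly what lowers the thresholds from $2n+1$ and $n+2$ to $2n$ and $n+1$.

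It then remains to prove the Nevanlinna analogue of Theorem~\ref{key}, equivalently of Theorem~\ref{IP}, by the same argument as in Section~\ref{sec:proofofKey}: blow up $\PP^n$ along the union of the $D_i\cap D_0$ and $H_\infty\cap D_0$, write $\pi^*D_i=\widetilde D_i+E_i$, and use the holomorphic forms of the functorial identities behind Lemma~\ref{DivisibilityandIP} and Definition-Theorem~\ref{weil} to rewrite the counting inequalities as $N_{\widetilde f}(r,\widetilde D_i)=O(1)$ for the lift $\widetilde f\colon\CC\to X$, i.e. $\widetilde f$ is integral (in the analytic sense) with respect to $\widetilde D=\widetilde D_1+\cdots+\widetilde D_r+\widetilde H_\infty$. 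Now run the holomorphic Ru--Vojta theorem on $X$ with the big line sheaf $\mathcal L=\mathcal O\!\left(\ell(n+1)\pi^*A-E\right)$ for $\ell\gg0$, using the $\beta$-estimate of Lemma~\ref{countinglambda}, the general-position identity of Lemma~\ref{subgeneralweil}, and the Nevanlinna second main theorem for general-position hypersurfaces (the analogue of Theorem~\ref{LevinDuke}). Since $\widetilde f$ is algebraically nondegenerate, its image is not contained in the exceptional subvariety produced by Ru--Vojta, so the resulting inequality holds; combining it with the counting estimates exactly as in \eqref{heightbound1} and \eqref{heightbound} gives $c\,T_f(r)\le S_f(r)$ for some constant $c>0$. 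As $\widetilde f$ is nonconstant, $T_f(r)\to\infty$, which is a contradiction.

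The step I expect to be the main obstacle is having the holomorphic Ru--Vojta theorem available in the required form --- the analytic counterpart of Theorem~\ref{Ru-Vojta}, together with the companion second main theorem for general-position hypersurfaces over $\CC$. By contrast, the computation of $\beta$ (Lemma~\ref{countinglambda}, Corollary~\ref{beta}, and the intersection-number bookkeeping of Section~\ref{sec:prop7} where it is needed) is characteristic-free and transfers verbatim. A secondary technical point is the general-position bookkeeping around the adjoined hyperplane $H_\infty$ and the blow-up along $H_\infty\cap D_0$: one must check that the divisors still intersect properly, if necessary after a modification supported over $\{X_0=0\}$ (which leaves $f$ untouched), so that the blow-up remains Cohen--Macaulay and the $\beta$-estimates apply. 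Everything else is a routine transcription of Sections~\ref{sec:RuVojta}--\ref{sec:proofofKey}.
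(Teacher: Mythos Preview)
Your proposal is correct and is essentially the paper's approach: transcribe the proof of Theorems~\ref{keyprojective}/\ref{key} into Nevanlinna theory, replacing the arithmetic Ru--Vojta inequality by its analytic analogue (stated in the paper as Theorem~\ref{Ru-Vojta2}). You have in fact supplied considerably more detail than the paper's one-line sketch --- in particular you correctly identify the device of adjoining the hyperplane $H_\infty$, which the affine curve $f=[1:h_1:\cdots:h_n]$ automatically omits, as the reason the thresholds drop from $r\ge 2n+1,\ n+2$ in Theorem~\ref{keyprojective} to $r\ge 2n,\ n+1$ here, a point the paper leaves entirely implicit.
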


This can be seen as a generalization of Borel's Theorem \cite{bor} stating that nowhere vanishing entire functions $h_1,\dots,h_{n+1}$ satisfying the identity $h_1+\dots+h_{n+1}=1$ are dependent. Indeed, we have the following corollary.

\begin{corollary}
Let $h_1,\dots,h_n$ be holomorphic functions on $\CC$ such that $\frac{1}{(h_1\dots h_n).(1-\sum_{i=1}^n h_i)}$ is holomorphic. Then $h_1,\dots,h_n$ are linearly dependent.
\end{corollary}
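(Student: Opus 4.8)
The plan is to deduce this corollary from the preceding theorem exactly as Corvaja--Zannier's Corollary 1 is deduced from their Theorem 4, and as Corollary \ref{cor:1} is deduced from Theorem \ref{keyprojective} in the arithmetic setting. First I would set up the dictionary: work with the $n+1$ linear forms $X_1,\dots,X_n$ and $1-\sum_{i=1}^n X_i$ in $\CC[X_1,\dots,X_n]$. The hypothesis that $\frac{1}{(h_1\cdots h_n)\cdot(1-\sum_{i=1}^n h_i)}$ is holomorphic says precisely that the entire function $\prod_{i=1}^n h_i \cdot \left(1-\sum_{i=1}^n h_i\right)$ is nowhere vanishing on $\CC$; equivalently, each of $h_1,\dots,h_n$ and $1-\sum_{i=1}^n h_i$ is a nowhere-vanishing entire function. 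In the language of the theorem above, take $G\equiv 1$ (degree $0$), take $F_1=X_1,\dots,F_n=X_n$, $F_{n+1}=1-\sum_{i=1}^n X_i$, so $r=n+1$. These $n+1$ hyperplanes are in general position in $\mathbb A^n$ (any $n$ of them meet in a single point, and all $n+1$ have empty common intersection), and $\deg F_i = 1 \geq 0 = \deg G$ for each $i$. The quotient $\frac{G(h_1,\dots,h_n)}{\prod_{i=1}^{n+1} F_i(h_1,\dots,h_n)}$ is holomorphic by hypothesis, so case (ii) of the theorem applies with $r = n+1$, and we conclude that $h_1,\dots,h_n$ are algebraically dependent.

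The remaining point is to upgrade "algebraically dependent" to "linearly dependent," which requires a small additional argument specific to the linear case. Here I would argue as in Borel's theorem: the nowhere-vanishing entire functions $g_1 := h_1,\dots,g_n := h_n,\, g_{n+1} := -\left(1-\sum_{i=1}^n h_i\right) = \sum_{i=1}^n h_i - 1$ and $g_{n+2} := 1$ (or rather normalize so that $h_1+\dots+h_n + \left(1 - \sum h_i\right) = 1$, viewing the summands $h_1,\dots,h_n, 1-\sum h_i$ as $n+1$ nowhere-vanishing entire functions summing to $1$) fall under the hypotheses of Borel's theorem \cite{bor}, which forces a nontrivial linear relation among a subcollection. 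One then runs the standard induction: if $h_1,\dots,h_n, 1-\sum h_i$ satisfy a minimal-length vanishing linear relation $\sum_{j\in J} c_j u_j = 0$ with all $c_j \neq 0$, divide by one term to get a shorter Borel-type identity unless $|J| \leq 2$, and a two-term relation between nowhere-vanishing functions says two of them are constant multiples of each other. Propagating this yields that $h_1,\dots,h_n$ themselves are linearly dependent (the constant function $1 = h_1 + \dots + h_n + (1-\sum h_i)$ is accounted for because the affine relation among the $h_i$ coming from a linear dependence among $\{h_1,\dots,h_n,1-\sum h_i\}$ is genuinely linear in $h_1,\dots,h_n$).

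I expect the genuinely routine part to be the reduction to the theorem; the only place demanding care is the passage from algebraic to linear dependence. Strictly speaking, the cleanest route is to not invoke the theorem's Nevanlinna machinery at all for this corollary but simply cite Borel's theorem directly, since the hypothesis is exactly Borel's: $n+1$ nowhere-vanishing entire functions summing to $1$. That said, to keep the exposition parallel to the arithmetic Corollary \ref{cor:1} (which is stated as a consequence of Theorem \ref{keyprojective}), I would present it as: "Apply the theorem above, case (ii), to the linear forms $X_1,\dots,X_n$ and $1-\sum_{i=1}^n X_i$; algebraic dependence of $h_1,\dots,h_n$ follows, and the sharpening to linear dependence is Borel's theorem \cite{bor} applied to the nowhere-vanishing functions $h_1,\dots,h_n,1-\sum_{i=1}^n h_i$, which sum to $1$." The main obstacle, such as it is, is simply making sure the general-position hypothesis is correctly checked for these particular $n+1$ affine hyperplanes and that the degree condition $\deg F_i \geq \deg G$ is satisfied with $G\equiv 1$ — both of which are immediate.
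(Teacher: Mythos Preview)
The paper gives no separate proof of this corollary; it is stated immediately after the analytic theorem and the remark on Borel, so the intended argument is exactly your first paragraph: apply case (ii) of the preceding theorem to $F_1=X_1,\dots,F_n=X_n$, $F_{n+1}=1-\sum_i X_i$, $G\equiv 1$. This yields that $h_1,\dots,h_n$ are \emph{algebraically} dependent, in perfect analogy with Corollary~\ref{cor:1}.

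Your second paragraph, the upgrade to \emph{linear} dependence, contains a genuine gap. You assert that a linear relation among $h_1,\dots,h_n,1-\sum h_i$ is ``genuinely linear in $h_1,\dots,h_n$'', but in general it is only \emph{affine}. For $n=2$, take $h_1=e^z$ and $h_2=1$: then $1-h_1-h_2=-e^z$ is nowhere vanishing, so the hypothesis of the corollary holds, yet $e^z$ and $1$ are linearly independent over $\CC$. Hence the corollary, read as strict linear dependence of $h_1,\dots,h_n$, is actually false, and no argument can bridge this. What Borel's theorem does yield is a nontrivial \emph{affine} relation among $h_1,\dots,h_n$ (equivalently, $1,h_1,\dots,h_n$ are linearly dependent, i.e.\ the map $[1:h_1:\cdots:h_n]\to\PP^n$ is linearly degenerate). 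That is almost certainly the meaning intended by the paper, and under that reading your Borel argument is correct --- and, as you note, it then bypasses the theorem entirely.
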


We recall the following definition.
\begin{defi}
We say that a complex variety $X$ is Brody pseudo-hyperbolic if there exists a proper closed subset $Z \subset X$ such that any (non-constant) entire curve $f:\CC \to X$ is contained in $Z$ i.e. $f(\CC) \subset Z.$
\end{defi}

Then we can rephrase in the analytic setting the main theorems of this paper.
 \begin{theorem}
Let $n\ge 2$, $r\ge 2n+1$ and  $D_0,D_1, \dots, D_{r} $ be  hypersurfaces  in general position on $\mathbb P^n(\CC)$.     
Let $\pi: X\to \mathbb P^n$ be the blowup long the union of subschemes $D_i\cap D_0$, $1\le i\le r$, and let $\widetilde D_i$ be the strict transform of  $D_i$.  Let $D=\widetilde D_1+\cdots+\widetilde D_r$.  Then $ X\setminus D$ is Brody pseudo-hyperbolic.
\end{theorem}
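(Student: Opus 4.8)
The plan is to transcribe the proof of the arithmetic statement (Theorem~\ref{IP'}, obtained from Theorem~\ref{IP} via Theorem~\ref{key}) into Nevanlinna theory, replacing the arithmetic Ru--Vojta theorem by the holomorphic Second Main Theorem with the constant $\beta$ set up in this section, and Lemma~\ref{DivisibilityandIP} by its analytic analogue. The dictionary is the usual one: a $(D,S)$-integral set becomes an entire curve $f$ with $N_f(r,\widetilde D)\equiv 0$, the proximity function $m_{D,S}$ becomes $m_f(r,D)$, the counting function $N_{D,S}$ becomes $N_f(r,D)$, and $M_k$-constants become $O(1)$ or error terms $S_f(r)$ (which are $o(T_f(r))$ outside a set of finite measure). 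Since $\PP^n(\CC)$ is smooth and the centres $D_i\cap D_0$ are local complete intersections by general position and Remark~\ref{gpCM}, the blow-up $X$ is smooth, and every purely geometric input of Section~\ref{sec:proofofKey} — Proposition~\ref{basicintersection}, the $\beta$-estimate of Lemma~\ref{countinglambda}, and the intersection bounds — carries over without change.

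Concretely, let $f:\CC\to X\setminus D$ be a non-constant entire curve. Since Brody pseudo-hyperbolicity is insensitive to the multiplicities of the divisors, we may assume $D_i\equiv dA$ for a fixed ample $A$ and all $0\le i\le r$, exactly as in the proof of Theorem~\ref{IP}; put $g=\pi\circ f:\CC\to\PP^n$ and $Y=\bigcup_{i=1}^r(D_i\cap D_0)$, so that $\pi^*D_i=\widetilde D_i+E_i$ by Proposition~\ref{basicintersection}. Because $f(\CC)\cap\widetilde D_i=\emptyset$, the First Main Theorem gives $m_f(r,\widetilde D_i)=T_{f,\widetilde D_i}(r)+O(1)$, i.e. $N_f(r,\widetilde D_i)\equiv 0$. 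Feeding this into the functorial identities for $\pi^*D_i=\widetilde D_i+E_i$ and for the subscheme $D_0$ in the role of $W$ (the Nevanlinna analogues of \eqref{tildeD0} and \eqref{minWD}, using $\codim(D_i\cap D_0)\ge 2$) yields the analytic form of Lemma~\ref{DivisibilityandIP}: the proximity of $g$ to $D_i$ is dominated by its proximity to $D_0$, up to $O(1)$, for each $1\le i\le r$.

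Next I would run the analytic analogue of the proof of Theorem~\ref{key}(i). Fix $\ell$ large so that $\calL=\calO(\ell(n+1)\pi^*A-E)$ is ample and $\beta_{\calL,\pi^*D_i}^{-1}\le \tfrac1\ell\bigl(1+\tfrac1{\ell\sqrt\ell}\bigr)$ (Lemma~\ref{countinglambda}), apply the holomorphic Ru--Vojta SMT to $f$ with the divisors $\pi^*D_i=\widetilde D_i+E_i$, and combine it with the Nevanlinna analogues of Lemma~\ref{subgeneralweil} and of Levin's estimate Theorem~\ref{LevinDuke} — both of which over $\CC$ follow from Cartan's Second Main Theorem, with the same uniform control on the exceptional locus that Theorem~\ref{SubspaceVojta} supplies arithmetically — to bound $\sum_i N_f(r,\widetilde D_i)$ above and below as in \eqref{counting}, \eqref{countinglower}, \eqref{countinglower2}. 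This produces, outside a fixed proper subvariety $Z_0\subsetneq\PP^n$ (enlarged so that $Z_0\supseteq Y$), the inequality
\[
\Bigl(r-2n-1+\tfrac1\ell-\tfrac{2(n+1)}{\ell\sqrt\ell}-\tfrac2{\ell^2}\Bigr)T_{g,A}(r)\le O(1)+S_f(r),
\]
with the parenthesised constant positive because $r\ge 2n+1$; hence $T_{g,A}(r)=O(1)$ and $g$ is constant, unless $g(\CC)\subseteq Z_0$. In either case $f(\CC)\subseteq\pi^{-1}(Z_0)$, and $\pi^{-1}(Z_0)\cap(X\setminus D)$ is the required proper closed subset; in particular the non-constant entire curves lying in an exceptional fibre $E_{i,x}\setminus\widetilde D_i\cong\CC$ are automatically absorbed because $E_i\subseteq\pi^{-1}(Z_0)$.

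The main obstacle is not this translation but having the correct analytic Ru--Vojta theorem in hand: a Second Main Theorem of the form $\sum_i\beta_{\calL,D_i}\,m_f(r,D_i)\le(1+\varepsilon)T_{f,\calL}(r)+S_f(r)$ for holomorphic $f:\CC\to X$ and properly intersecting $D_i$, together with — crucially for the ``pseudo'' conclusion — the statement that the hard part of the exceptional set is a fixed proper subvariety independent of $f$. This is precisely the holomorphic counterpart of Theorem~\ref{Ru-Vojta} (resting, via Cartan's and Vojta's refinements, on the SMT for hyperplanes just as Theorem~\ref{Ru-Vojta} rests on Theorem~\ref{SubspaceVojta}); once it is available, the argument above goes through verbatim, and the other analytic statements of this section follow in the same way from the analogues of Theorems~\ref{proposition7}, \ref{proposition1} and \ref{Theorem7}.
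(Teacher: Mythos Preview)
Your proposal is correct and follows exactly the paper's approach: the paper itself says only that ``the proofs of the above statements are the same as the arithmetic ones replacing Theorem~\ref{Ru-Vojta} by its analytic analogue,'' and then records the needed analytic Ru--Vojta statement (Theorem~\ref{Ru-Vojta2}) with its uniform exceptional locus, which is precisely the ingredient you identify as the main obstacle. Your write-up is in fact more detailed than the paper's; one small terminological slip is that the analogue of condition~(ii) in Lemma~\ref{DivisibilityandIP} concerns the pointwise local valuations feeding the \emph{counting} functions (so $N_g(r,D_i)\le N_g(r,D_0)+O(1)$), not the proximity functions.
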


\begin{theorem}
Let $n\ge 2$ and  $H_1,\hdots,H_{2n}$ be $2n$ hyperplanes in general position on $\mathbb P^n(\CC)$.   Choose $n+1$ points $P_i$, $1\le i\le n+1$ such that $P_i\in H_i$, $1\le i\le n+1$, and $P_i\notin H_j$ if $i\ne j$ for $1\le j\le 2n$.  Let $ \pi: X\to\mathbb P^n$ be the blowup of the $n+1$ points $P_i$, $1\le i\le n+1$, and let $D\subset   X$ be the strict transform of $H_1+\cdots+H_{2n}$.   Then  
$  X\setminus D$ is Brody pseudo-hyperbolic.
\end{theorem}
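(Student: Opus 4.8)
The plan is to transcribe the proof of Theorem \ref{proposition7} into the Nevanlinna dictionary, with an entire curve playing the role of an infinite set of $S$-integral points and Cartan's second main theorem replacing the subspace theorem (Theorem \ref{SubspaceVojta}). Let $f\colon\CC\to X\setminus D$ be a non-constant holomorphic map; we must produce a proper closed $Z\subset X$, independent of $f$, with $f(\CC)\subset Z$. The geometric input is unchanged: as in Lemma \ref{beta2}, fix $\ell$ large and put $A=\sum_{i=1}^{n+1}\ell\widetilde H_i+\widetilde H_{n+2}\sim(\ell(n+1)+1)\pi^*H-\ell E$ with $E=E_1+\cdots+E_{n+1}$, so that $A$ is big and nef, $\beta_{A,\widetilde H_i}=\beta\ell$ for $1\le i\le n+1$ and $=\widetilde\beta\ell$ for $n+2\le i\le 2n$, and $\delta:=(n-1)\widetilde\beta+2\beta-2>0$ by \eqref{usingRV4}; since $X$ is smooth and the $H_i$ are in general position, $\widetilde H_1,\dots,\widetilde H_{2n}$ are in general position.

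First I would apply the holomorphic analogue of Theorem \ref{Ru-Vojta} — the second main theorem of Ru--Vojta type for entire curves, with the constant $\beta$ and a proper algebraic exceptional set independent of $f$ — to $f$, $\mathcal L=\mathcal O(A)$ and $\widetilde H_1,\dots,\widetilde H_{2n}$. After dividing by $\ell$ this yields a proper closed $Z\subset X$, independent of $f$, such that for $f(\CC)\not\subset Z$
\[
\beta\sum_{i=1}^{n+1}m_f(r,\widetilde H_i)+\widetilde\beta\sum_{i=n+2}^{2n}m_f(r,\widetilde H_i)\le(1+\epsilon)\Bigl(n+1+\tfrac1\ell\Bigr)T_f(r,\pi^*H)-(1+\epsilon)\,T_f(r,E)+S_f(r),
\]
where $T_f(r,\pi^*H)=T_{\pi\circ f}(r)$ is the characteristic of $\pi\circ f$ and $S_f(r)$ is an error term small in the usual sense (valid for $r$ outside a set of finite Lebesgue measure). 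Because $f$ avoids $D=\widetilde H_1+\cdots+\widetilde H_{2n}$ one has $N_f(r,\widetilde H_i)=0$, so the first main theorem gives $m_f(r,\widetilde H_i)=T_f(r,\widetilde H_i)+O(1)$; combining this with $\widetilde H_i\sim\pi^*H-E_i$ for $i\le n+1$ and $\widetilde H_i\sim\pi^*H$ for $i\ge n+2$ produces the analogues of \eqref{proximity1}--\eqref{proximity2}, namely $\sum_{i=1}^{n+1}m_f(r,\widetilde H_i)=(n+1)T_f(r,\pi^*H)-T_f(r,E)+O(1)$ and $\sum_{i=n+2}^{2n}m_f(r,\widetilde H_i)=(n-1)T_f(r,\pi^*H)+O(1)$, as well as $N_f(r,E)=\sum_{i=1}^{2n}N_{\pi\circ f}(r,H_i)$, the analogue of \eqref{counting1}.

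Next I would apply Cartan's second main theorem to $\pi\circ f\colon\CC\to\PP^n$ and $H_1,\dots,H_{2n}$ in general position, using Nochka weights to absorb the case where $\pi\circ f$ is linearly degenerate (its image then lying in one of finitely many proper linear subspaces, which we put into the exceptional locus $W$). In the non-degenerate case this gives $\sum_{i=1}^{2n}N_{\pi\circ f}(r,H_i)\ge(n-1-\epsilon')T_{\pi\circ f}(r)+S_f(r)$, hence $T_f(r,E)\ge N_f(r,E)\ge(n-1-\epsilon')T_{\pi\circ f}(r)+S_f(r)$, the analogue of \eqref{hE}. Substituting the proximity and counting relations above into the displayed inequality and rearranging exactly as in the chain \eqref{usingRV}, \eqref{usingRV2} and the final estimate of the proof of Theorem \ref{proposition7}, with $\epsilon=\delta/4(n+3)$ and $\epsilon'\le\delta/4(1-\beta)$, yields
\[
\tfrac{\delta}{2}\,T_{\pi\circ f}(r)\le S_f(r),
\]
which forces $T_{\pi\circ f}(r)$ to be bounded, i.e. $\pi\circ f$ constant. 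Therefore $f$ is constant unless its image lies in a fibre of $\pi$, i.e. in $\supp E$, and in all cases $f(\CC)\subset Z\cup\pi^{-1}(W)\cup\supp E$, a fixed proper closed subset; this is Brody pseudo-hyperbolicity.

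As in the arithmetic proof, the only substantial content beyond Lemma \ref{beta2} (which is purely intersection-theoretic and carries over verbatim) sits in the two black boxes invoked above: the holomorphic Ru--Vojta second main theorem with constant $\beta$ and an $f$-independent algebraic exceptional set, and the form of Cartan's theorem that simultaneously handles linearly degenerate curves. I expect the main thing to get right to be not a new idea but the bookkeeping — controlling the error term $S_f(r)$ by the standard Nevanlinna calculus (which also dispenses with the case of a non-constant rational $\pi\circ f$), and checking that every exceptional locus arising along the way is a genuine proper subvariety independent of $f$.
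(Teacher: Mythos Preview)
Your proposal is correct and follows essentially the same route as the paper: transcribe the proof of Theorem \ref{proposition7} into Nevanlinna theory, replacing Theorem \ref{Ru-Vojta} by its analytic analogue (Theorem \ref{Ru-Vojta2}) and Theorem \ref{SubspaceVojta} by Theorem \ref{SubspaceVojta2}, while keeping Lemma \ref{beta2} and the intersection-theoretic computations unchanged. One small remark: the invocation of Nochka weights is unnecessary here, since the $H_i$ are in general position; what you actually need (and what you in fact use) is precisely the content of Theorem \ref{SubspaceVojta2}, namely that the exceptional locus for Cartan's inequality is a \emph{fixed} finite union of hyperplanes depending only on $H_1,\dots,H_{2n}$, not on $f$.
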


 \begin{theorem}
Let  $n\ge 2$, $q\ge 3n$ be an integer; for every index $i\in \mathbb Z/q\mathbb Z$, let $H_i$ be a hyperplane in $\mathbb P^n(\CC)$.  Suppose that $H_i$'s are in general position.  Let for each  index $i\in \mathbb Z/q\mathbb Z$, $P_i$ be the intersection point $\cap_{j=0}^{n-1} H_{i+j}$.  Let $\pi: X\to \mathbb P^n$ be the blow-up over the points $P_1,\hdots, P_q$ and let $\widetilde H_i\subset  X$ be the corresponding strict transform of $H_i$ and let $D=\widetilde H_1+\cdots+\widetilde H_q$.  Then $ X\setminus D$ is Brody pseudo-hyperbolic.
\end{theorem}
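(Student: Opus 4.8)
The proof runs exactly parallel to that of Theorem \ref{proposition1}, with the arithmetic Ru--Vojta statement (Theorem \ref{Ru-Vojta}) replaced by its Nevanlinna-theoretic counterpart, namely the Second Main Theorem of Ru--Vojta type for holomorphic maps into projective varieties, with the same control on the exceptional set. The plan is as follows.

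First I would set up the reduction. A non-constant entire curve in $X\setminus D$ is a non-constant holomorphic map $f\colon\CC\to X$ with $f(\CC)\cap\supp D=\emptyset$; it therefore suffices to produce a proper Zariski-closed subset $Z^{\circ}\subset X$, independent of $f$, containing the image of every such $f$. As in the proof of Theorem \ref{proposition1}, since $X$ is smooth (hence Cohen--Macaulay) and $H_1,\dots,H_q$ are hyperplanes in general position meeting transversally, the strict transforms $\widetilde H_1,\dots,\widetilde H_q$ are in general position on $X$, and hence intersect properly by Remark \ref{gpCM}.

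Next, the geometric input is precisely the one already established over an arbitrary field: Lemma \ref{beta3} (a pure intersection-theoretic computation on the blow-up $X$, valid verbatim over $\CC$) gives that $D$ is big and
\[
\beta:=\beta_{D,\widetilde H_1}=\cdots=\beta_{D,\widetilde H_q}>1.
\]
Applying the holomorphic Ru--Vojta theorem with $\calL=\calO(D)$, the divisors $\widetilde H_1,\dots,\widetilde H_q$, and $\e=\tfrac12(\beta-1)>0$, one obtains a proper Zariski-closed $Z\subset X$, independent of $f$, such that for every non-constant $f\colon\CC\to X$ with $f(\CC)\not\subset Z$ the inequality
\[
\beta\sum_{i=1}^q m_f(r,\widetilde H_i)\le (1+\e)\,T_{f,D}(r)
\]
holds for all $r$ outside a set of finite Lebesgue measure. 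Since $f$ omits $D$, the counting functions $N_f(r,\widetilde H_i)$ vanish, so the First Main Theorem yields $m_f(r,\widetilde H_i)=T_{f,\widetilde H_i}(r)+O(1)$ and hence $\sum_i m_f(r,\widetilde H_i)=T_{f,D}(r)+O(1)$. Substituting gives
\[
\tfrac12(\beta-1)\,T_{f,D}(r)=(\beta-1-\e)\,T_{f,D}(r)\le O(1)
\]
outside a set of finite measure, so $T_{f,D}(r)=O(1)$. Finally, since $D$ is big, for a fixed ample $A$ there is a proper Zariski-closed $Z'\subset X$ (depending only on $A$ and $D$) with $T_{f,A}(r)\le T_{f,D}(r)+O(1)$ whenever $f(\CC)\not\subset Z'$; thus if $f(\CC)\not\subset Z\cup Z'$ then $T_{f,A}(r)=O(1)$, forcing $f$ to be constant, a contradiction. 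Hence $Z^{\circ}:=Z\cup Z'$ works.

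The main obstacle is not in this argument, which is essentially a dictionary translation of the proof of Theorem \ref{proposition1}, but in having at hand the holomorphic Ru--Vojta Second Main Theorem with exceptional set independent of $f$; this is the analytic analogue of Theorem \ref{Ru-Vojta}, and its proof (in parallel with the function-field statement) is the substantive part of this portion of the paper. All the genuinely new work for \emph{this} theorem — the nefness of $D-m\widetilde H_i$ for $0\le m\le n$ and the resulting bound $\beta>1$ — has already been carried out in Lemmas \ref{DDi} and \ref{beta3} and needs no modification.
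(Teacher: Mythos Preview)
Your proposal is correct and follows precisely the approach the paper takes: the paper states explicitly that the proofs of the analytic statements ``are the same as the arithmetic ones replacing Theorem \ref{Ru-Vojta} by its analytic analogue,'' and your argument is exactly this translation of the proof of Theorem \ref{proposition1}, using Lemma \ref{beta3} for $\beta>1$ and the bigness of $D$, then applying Theorem \ref{Ru-Vojta2} with $\e=\tfrac12(\beta-1)$ and invoking the First Main Theorem and the big-versus-ample comparison to force $f$ constant outside $Z\cup Z'$.
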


The proofs of the above statements are the same as the arithmetic ones replacing Theorem \ref{Ru-Vojta} by its analytic analogue.
Its generalization is obtained using Vojta's version of Schmidt's subspace theorem \cite{Vojta89}, which gives a better control on the exceptional sets.
\begin{theorem}\label{SubspaceVojta2} Let $H_1,\hdots,H_q$ be hyperplanes in $\mathbb P^n(\CC)$ with the corresponding Weil functions $\lambda_{H_1},\hdots,\lambda_{H_q}.$ Then there exists a finite union of hyperplanes $Z$, depending only on $H_1,\hdots,H_q$, such that for any $\epsilon>0$, and any (non-constant) entire curve $f: \CC \to X$ with $f(\CC)\not \subset Z$
\begin{align}
\int_0^{2\pi} \max_{I}\sum_{i\in I}  \lambda_{H_i}(f(re^{i\theta})) \frac{d\theta}{2\pi} \leq_{\operatorname{exc}} (n+1+\e) T_f(r)
\end{align}
holds, where $\leq_{\operatorname{exc}}$ means that the inequality holds for all $r\in \mathbb{R}^+$ except a set of finite Lebesgue measure., where the maximum is taken over subsets $\{1,\hdots,q\}$ such that the linear forms defining $H_i$ for $i\in I$ are linearly independent.
\end{theorem}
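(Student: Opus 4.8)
}
The plan is to deduce Theorem~\ref{SubspaceVojta2} from Cartan's Second Main Theorem for holomorphic curves and hyperplanes (the Nevanlinna analogue of Schmidt's subspace theorem), in the formulation with a maximum over subsets of linearly independent hyperplanes, paralleling Vojta's deduction of Theorem~\ref{SubspaceVojta} from Schmidt's theorem in \cite{Vojta89}. The first observation is that if $f\colon\CC\to\PP^n(\CC)$ is \emph{linearly non-degenerate}, i.e.\ its image is not contained in any hyperplane, then Cartan's theorem together with the First Main Theorem already gives
\[
\int_0^{2\pi}\max_I\sum_{i\in I}\lambda_{H_i}(f(re^{i\theta}))\,\frac{d\theta}{2\pi}\leq_{\operatorname{exc}}(n+1+\e)\,T_f(r)
\]
with \emph{no} exceptional algebraic locus at all; the passage from hyperplanes in general position to the $\max_I$ form for an arbitrary configuration is classical. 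So the entire content is the control of linearly degenerate $f$.

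Given such an $f$, let $V\subseteq\PP^n$ be the smallest linear subspace with $f(\CC)\subseteq V$, so $f$ is linearly non-degenerate in $V\cong\PP^m$ with $m<n$. If $V\subseteq H_i$ for some $i$, then $f(\CC)\subseteq H_i$; hence it suffices to place $H_1,\dots,H_q$ into $Z$ and assume $V\not\subseteq H_i$ for every $i$, in which case each $H_i\cap V$ is a hyperplane of $V$ and $\lambda_{H_i}(f)=\lambda_{H_i\cap V,V}(f)+O(1)$. Applying the non-degenerate case inside $V$ to the configuration $\{H_i\cap V\}$ bounds the $V$-analogue of the left-hand side by $(m+1+\e)T_f(r)\le(n+1+\e)T_f(r)$. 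What is left is a comparison of the maximum over subsets independent in $\PP^n$ with the maximum over subsets whose \emph{restrictions} to $V$ are independent: these agree up to $O(1)$ unless several of the $H_i$ have coinciding traces $H_i\cap V$ on $V$, and each such extra coincidence contributes only an additional term $m_f(r,\cdot)\le T_f(r)+O(1)$ by the First Main Theorem, that is, one unit of the slack $n+1-m$. Thus the desired inequality holds for every degenerate $f$ whose span $V$ is not in ``special position'' relative to the $H_i$.

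The decisive point, where Vojta's refinement genuinely enters, is that the subspaces $V$ in special position form a \emph{finite} set determined by $H_1,\dots,H_q$ alone. For instance a hyperplane $V$ exhibiting two distinct coincidences $H_{i_1}\cap H_{j_1}\subseteq V$ and $H_{i_2}\cap H_{j_2}\subseteq V$ is forced to equal the span of $(H_{i_1}\cap H_{j_1})\cup(H_{i_2}\cap H_{j_2})$ whenever that span is a hyperplane; more generally, tracing through the possible degeneration patterns of $\{H_i\cap V\}$ one checks that $V$ must be a member of the finite lattice of linear subspaces generated by $H_1,\dots,H_q$ under intersection and span, and that every lower-dimensional member of this lattice lies inside one of its hyperplane members or inside some $H_i$. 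Taking $Z$ to be the union of the $H_i$ together with all hyperplanes in this lattice then yields a finite union of hyperplanes, manifestly independent of $f$ and of $\e$, outside which the stated inequality holds.

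I expect the main obstacle to be exactly this last step: isolating precisely which degeneration patterns of the restricted configuration $\{H_i\cap V\}$ can overwhelm the slack $n+1-\dim V$, and verifying that each of them forces $V$ into the finite lattice above. This is the combinatorial heart of \cite{Vojta89} and of the proof of Theorem~\ref{SubspaceVojta}, and it must be carried out here keeping track of the radius-sets of finite Lebesgue measure produced by each application of Cartan's theorem and of the First Main Theorem; since only finitely many such applications occur, the union of these sets still has finite measure, so the conclusion holds with $\leq_{\operatorname{exc}}$. The remaining manipulations of proximity and counting functions are routine and mirror those already used in the proofs of Theorems~\ref{proposition7} and \ref{proposition1}.
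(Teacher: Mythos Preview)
The paper does not actually prove Theorem~\ref{SubspaceVojta2}. It is stated in Section~\ref{sec:analytic} as the analytic counterpart of Vojta's refinement of the subspace theorem (Theorem~\ref{SubspaceVojta}, quoted from \cite{Vojta89}), and is then used as a black box to upgrade the exceptional set in the analytic Ru--Vojta theorem (Theorem~\ref{Ru-Vojta2}). So there is no proof in the paper to compare your proposal against.

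That said, your outline is the right one and is precisely the Nevanlinna-theoretic transcription of Vojta's argument in \cite{Vojta89}: Cartan's Second Main Theorem handles the linearly non-degenerate case with no exceptional locus, and for degenerate $f$ one restricts to the linear span $V$, applies Cartan there, and must reconcile the two notions of ``maximal independent subset'' using the slack $(n+1)-(m+1)$. You have correctly located the only nontrivial step: showing that the subspaces $V$ for which this reconciliation fails form a \emph{finite} family depending only on $H_1,\dots,H_q$. Your description of this family as ``the lattice generated by the $H_i$ under intersection and span'' is morally right but not literally how the argument runs; in \cite{Vojta89} one instead proceeds by induction on $n$, and the exceptional hyperplanes that appear are those spanned by suitable intersections $\bigcap_{i\in J}H_i$ together with further $H_j$'s, arising exactly when the restricted arrangement $\{H_i\cap V\}$ has too many coincidences. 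The bookkeeping of how many coincidences overwhelm the slack is the genuine content, and your proposal defers it rather than carries it out. The remark that only finitely many applications of Cartan and of the First Main Theorem occur, so the exceptional $r$-sets can be merged, is correct and is the only place where the analytic argument differs in form from the arithmetic one.
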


By carefully tracing the exceptional sets with Theorem \ref{SubspaceVojta2}, the general analytic Theorem of Ru and Vojta can be stated as follows.

\begin{theorem}\cite[General Theorem (Analytic Part)]{ruvojta}\label{Ru-Vojta2}
    Let $X$ be a complex projective variety of dimension $n$ and let $D_1,\cdots,D_q$ be effective Cartier divisors intersecting properly on $X$. Let $\mathcal L$ be a big line bundle. Let $f: \CC \to X$ be a Zariski dense entire curve. Then, for every $\varepsilon>0$, there exists a  proper Zariski-closed subset $Z\subset X$, such that for any (non-constant) entire curve $f: \CC \to X$ with $f(\CC)\not \subset Z$,
    
    \begin{align*}
    \sum_{j=1}^q\beta_{\mathcal L, D_j}m_{f}(r,D_j) \leq_{\operatorname{exc}} (1+\varepsilon)T_{\mathcal L,f}(r)
\end{align*}
    holds,  where $\leq_{\operatorname{exc}}$ means that the inequality holds for all $r\in \mathbb{R}^+$ except a set of finite Lebesgue measure.
\end{theorem}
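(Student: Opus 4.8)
The plan is to run, essentially verbatim, the proof of the analytic General Theorem of Ru and Vojta from \cite{ruvojta}, inserting Theorem \ref{SubspaceVojta2} at the single point where a Second Main Theorem for holomorphic curves in projective space is used; this is the exact Nevanlinna analogue of the way Theorem \ref{SubspaceVojta} is used in Section \ref{sec:RuVojta} to make the exceptional set in Theorem \ref{Ru-Vojta} independent of $k$ and $S$. First I would fix $\varepsilon>0$. Because $\beta_{\mathcal L,D_j}$ is defined as a limit over $N$, and the dimensions $h^0(X,\mathcal L^N(-mD_j))$ are governed by polynomials of degree $n$ in $N$ (Lemma \ref{nef2} together with the counting lemmas recalled in Section \ref{sec:prop7}), one fixes once and for all a large integer $N=N(\varepsilon,X,\mathcal L,D_1,\dots,D_q)$ for which every approximate equality entering the argument holds with relative error at most $\varepsilon/2$.

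For this $N$, I would set $V_N:=H^0(X,\mathcal L^N)$, $M:=\dim V_N-1$, and carry out the filtration-and-basis construction of \cite{ruvojta}, whose engine is Autissier's lemma \cite{aut} — the very input already exploited in Lemma \ref{lem_aut1_4_2} and Corollary \ref{beta}. For each $j$ one filters $V_N$ by the subspaces $H^0(X,\mathcal L^N(-mD_j))$, $m\ge0$, and, using that the $D_j$ intersect properly, one obtains finitely many bases $\mathcal B_1,\dots,\mathcal B_s$ of $V_N$ (each adapted to one such filtration), with associated rational maps $\Phi_1,\dots,\Phi_s\colon X\dashrightarrow\mathbb P^{M}$, finite hyperplane configurations $\mathcal H_1,\dots,\mathcal H_s$ in $\mathbb P^M$, and base loci $B_1,\dots,B_s\subset X$ — all of these determined by $X,\mathcal L,N,D_1,\dots,D_q$ alone. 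The combinatorial lemma of Ru--Vojta then reduces the left-hand side of the desired inequality, after the standard dictionary that replaces $\sum_{v\in S}$ by $\int_0^{2\pi}(\cdot)\tfrac{d\theta}{2\pi}$ and $h$ by $T_f$, to a finite sum over $t=1,\dots,s$ of Cartan proximity integrals $\int_0^{2\pi}\max_I\sum_{i\in I}\lambda_{H_{t,i}}\bigl((\Phi_t\circ f)(re^{i\theta})\bigr)\tfrac{d\theta}{2\pi}$, with numerical coefficients arranged — through the defining property of $\beta_{\mathcal L,D_j}$ and the choice of $N$ — so that bounding each term by Cartan's Second Main Theorem reassembles precisely into $(1+\varepsilon)T_{\mathcal L,f}(r)$ on the right.

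Then, for the given non-constant entire curve $f\colon\CC\to X$ with $f(\CC)\not\subset Z$ ($Z$ as below), I would apply Theorem \ref{SubspaceVojta2} to each composite $\Phi_t\circ f\colon\CC\to\mathbb P^M$ and the configuration $\mathcal H_t$: provided $\Phi_t\circ f$ is not contained in the finite union of hyperplanes $Z_0^{(t)}\subset\mathbb P^M$ that the theorem attaches to $\mathcal H_t$ alone, the $t$-th Cartan integral is $\leq_{\operatorname{exc}}(M+1+\varepsilon/2)\,T_{\Phi_t\circ f}(r)$ for all $r$ outside a set of finite Lebesgue measure, while $T_{\Phi_t\circ f}(r)\le N\,T_{\mathcal L,f}(r)+O(1)$ because $\Phi_t$ is given by sections of $\mathcal L^N$. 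Taking
\[
Z:=\overline{\bigcup_{t=1}^{s}\bigl(\Phi_t^{-1}(Z_0^{(t)})\cup B_t\bigr)}\ \subset\ X,
\]
a proper Zariski-closed subset that depends only on $X,\mathcal L$ and the $D_j$, the hypothesis $f(\CC)\not\subset Z$ forces $\Phi_t\circ f$ to avoid $Z_0^{(t)}$ for every $t$; summing the $s$ inequalities (each valid off its own finite-measure subset of $\RR^+$, hence all valid off one finite-measure set) and feeding them into the combinatorial reduction of the preceding paragraph yields $\sum_{j=1}^q\beta_{\mathcal L,D_j}\,m_f(r,D_j)\leq_{\operatorname{exc}}(1+\varepsilon)\,T_{\mathcal L,f}(r)$, as claimed.

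The hard part, I expect, is not any single estimate but the bookkeeping of the exceptional set: one must verify that passing from the $D_j$-filtrations to honest morphisms $\Phi_t$ — which in general requires blowing up the base loci $B_t$ and working on the blow-up — introduces no dependence on $f$, and that the finitely many configurations $\mathcal H_t$ are genuinely functions of $X,\mathcal L,N,D_1,\dots,D_q$ only. This is exactly where the refined statement \ref{SubspaceVojta2} is indispensable in place of a bare Cartan Second Main Theorem: under a degeneration of some $\Phi_t\circ f$, the classical argument would recurse into an $f$-dependent linear subspace, whereas \ref{SubspaceVojta2} confines the exceptional locus to a fixed finite union of hyperplanes. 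All the remaining ingredients — the $h^0$-asymptotics, the Autissier/Ru--Vojta combinatorial lemma, and the translation between local Weil functions and Nevanlinna proximity functions — are formally identical to those in the proof of Theorem \ref{Ru-Vojta}, so the argument can be carried out in parallel with Section \ref{sec:RuVojta}, with integrals over circles replacing sums over places.
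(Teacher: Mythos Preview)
Your proposal is correct and follows exactly the approach indicated in the paper: the paper does not give a detailed proof of Theorem \ref{Ru-Vojta2} but simply says that ``by carefully tracing the exceptional sets with Theorem \ref{SubspaceVojta2}, the general analytic Theorem of Ru and Vojta can be stated as follows,'' and your outline is a faithful unpacking of that sentence. Your description also parallels, step for step, the explicit proof the paper does give in the function field case (Theorem \ref{thm:ffconst}), where the same filtration/basis construction is set up and the function field analogue of Theorem \ref{SubspaceVojta2} is inserted at the Schmidt step to pin down the exceptional locus.
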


\section{Function Fields}\label{sec:ff}
In this section we give the analogue statements over function fields of the theorems obtained in the previous sections.
For this section we let $\kappa$ be an algebraically closed field of characteristic zero. Let $\calC$ be a non-singular projective curve defined over $\kappa$ and let $K = \kappa(\calC)$ denote its function field. We refer to \cite[Section 7.2]{RTW} for the basic definitions of heights and proximity functions in the function field setting. We recall the definition of algebraic hyperbolicity.

\begin{definition}\label{def:alg_hyp}
   Let $(X,D)$ be a pair of a non-singular projective variety $X$ defined over $\kappa$ and a normal crossing divisor $D$ on $X$. We say that $(X,D)$ is \emph{algebraically hyperbolic} if there exists an ample line bundle $\calL$ on $X$ and a positive constant $\alpha$ such that, for every non-singular projective curve $\calC$ and every morphism $\varphi: \calC \to X$ the following holds:
   \begin{equation}\label{eq:alg_hyp}
       \deg \varphi^* \calL \leq \alpha \cdot \left( 2g(\calC) - 2 + N^{[1]}_\varphi(D)\right),
   \end{equation}
   where $N^{[1]}_\varphi(D)$ is the cardinality of the support of $\varphi^*(D)$.

   We say that $(X,D)$ is \emph{pseudo algebraically hyperbolic} if there exists a proper closed subvariety $Z$ of $X$ such that \eqref{eq:alg_hyp} holds for every morphism $\varphi: \calC \to X$ such that $\varphi(\calC)$ is not contained in $Z$.
\end{definition}

We can now rephrase Theorems \ref{IP'}, \ref{proposition7} and \ref{Theorem7}.

\begin{theorem}\label{th:ff1.3}
Let $n\ge 2$, $r\ge 2n+1$ and  $D_0,D_1, \dots, D_{r} $ be  hypersurfaces  in general position on $\mathbb P^n$ defined over $\kappa$.
Let $\pi: X\to \mathbb P^n$ be the blowup long the union of subschemes $D_i\cap D_0$, $1\le i\le r$, and let $\widetilde D_i$ be the strict transform of  $D_i$.  Let $D=\widetilde D_1+\cdots+\widetilde D_r$.  Then $ X\setminus D$ is algebraically pseudo-hyperbolic.
\end{theorem}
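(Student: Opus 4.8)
The plan is to reproduce, essentially verbatim, the argument deriving Theorem~\ref{IP'} from Theorem~\ref{IP} and Theorem~\ref{key}, with Theorem~\ref{Ru-Vojta} replaced by its function field analogue, whose proof (with exceptional set independent of $\calC$) occupies the end of this section. Recall that over function fields the role played by finiteness of integral points is taken over by the estimate~\eqref{eq:alg_hyp}: for every non-singular projective curve $\calC$ and every morphism $\varphi\colon\calC\to X$ with $\varphi(\calC)\not\subset Z$, one must bound $\deg\varphi^*\calL$ by $\alpha\,(2g(\calC)-2+N^{[1]}_\varphi(D))$, for a fixed ample sheaf $\calL$ on $X$ and a fixed proper closed $Z$.

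The first step is the function field Ru--Vojta inequality: for a projective $X/\kappa$, effective Cartier divisors $D_1,\dots,D_q$ intersecting properly, and a big line sheaf $\calL$, there is a proper Zariski-closed $Z\subset X$, independent of $\calC$, such that
\[
\sum_{i=1}^q\beta_{\calL,D_i}\,m_{\varphi,S}(D_i)\ \le\ (1+\varepsilon)\deg\varphi^*\calL+C\bigl(2g(\calC)-2+\#S\bigr)
\]
for every $\varphi\colon\calC\to X$ with $\varphi(\calC)\not\subset Z$ and every finite $S\subset\calC$, where $m_{\varphi,S}(D)=\deg\varphi^*D-\sum_{v\in\calC\setminus S}\operatorname{ord}_v(\varphi^*D)$. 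The only non-formal ingredient is the function field version of Vojta's sharpening of Schmidt's subspace theorem, which supplies the $\calC$-independent exceptional set exactly as Theorem~\ref{SubspaceVojta} does in the number field case; the term $2g(\calC)-2$ is the Riemann--Hurwitz substitute for the archimedean/ramification contribution.

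Granting this, the function field analogue of Theorem~\ref{key} is obtained by translating Section~\ref{sec:proofofKey} line by line. The reduction to $D_i\equiv A$, the blow-up $\pi\colon\widetilde V\to V$ along $Y=\bigcup_i(D_i\cap D_0)$ with $\pi^*D_i=\widetilde D_i+E_i$, the Cohen--Macaulayness of $\widetilde V$, and the proper intersection of $\pi^*D_1,\dots,\pi^*D_r$ (Propositions~\ref{CohenMacaulay} and~\ref{basicintersection}) are purely geometric and unchanged, as is the estimate $\beta_{\calL,\pi^*D_i}^{-1}\le\tfrac1\ell\bigl(1+\ell^{-3/2}\bigr)$ for $\calL=\mathcal O(\ell(n+1)\pi^*A-E)$, which is Lemma~\ref{countinglambda} (a statement about $h^0$ of line sheaves). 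Lifting $\varphi\colon\calC\to V$ to $\widetilde\varphi\colon\calC\to\widetilde V$, feeding $\calL$ and $\pi^*D_i=\widetilde D_i+E_i$ into the function field Ru--Vojta inequality, and then using $h_D=m_{D,S}+N_{D,S}$, $h_E=h_Y\circ\pi$, the function field form of Lemma~\ref{subgeneralweil}, and the function field form of Theorem~\ref{LevinDuke}, reproduces the chain~\eqref{eq:fromruvojta}--\eqref{heightbound1}, now with an extra additive term $O\bigl(2g(\calC)-2+\#S'\bigr)$, where $S'=S\cup\operatorname{supp}(\varphi^*D_0)$; since $\#\operatorname{supp}(\varphi^*D_0)\le\deg\varphi^*D_0=d_0\deg\varphi^*A$, this term is absorbed by the left-hand side once $\ell$ is large, and one obtains $\deg\varphi^*A\le\alpha(2g(\calC)-2+\#S)$ for $\varphi$ outside a fixed $Z\subset V$.

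Theorem~\ref{th:ff1.3} now follows. Since algebraic pseudo-hyperbolicity is insensitive to the multiplicities of the components of $D$ (cf.\ the reduction opening the proof of Theorem~\ref{IP}), we may assume $D_i\equiv dA$ for all $i$, so that $X=\widetilde V$ is the blow-up of $\PP^n$ along $\bigcup_i(D_i\cap D_0)$. For $\varphi\colon\calC\to X$ with image not in the exceptional locus, set $S=\operatorname{supp}(\varphi^*D)$, so $\#S=N^{[1]}_\varphi(D)$ and $\lambda_{\widetilde D_i,v}(\varphi)=0$ for $v\notin S$; the function field version of Lemma~\ref{DivisibilityandIP} gives $\lambda_{D_i,v}(\pi\circ\varphi)\le\lambda_{D_0,v}(\pi\circ\varphi)$ for $v\notin S$ and every $i$, so part~(i) of the function field Theorem~\ref{key} applied to $\pi\circ\varphi\colon\calC\to\PP^n$ yields $\deg\varphi^*\pi^*A\le\alpha'(2g(\calC)-2+N^{[1]}_\varphi(D))$ once $\varphi$ avoids $\pi^{-1}(Z)$. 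As $\pi^*A$ is nef and big, the standard comparison of a big divisor with an ample one (as in \cite[Proposition~10.11]{Vojta}) provides a proper closed $Z'\supset\pi^{-1}(Z)$, depending only on $\calL$ and $\pi^*A$, with $\deg\varphi^*\calL\le C\deg\varphi^*\pi^*A$ whenever $\varphi(\calC)\not\subset Z'$; this is exactly~\eqref{eq:alg_hyp}. The only genuine difficulty, as in the Nevanlinna case, is foundational: proving the function field Ru--Vojta inequality with a $\calC$-independent exceptional set via the function field subspace theorem à la Vojta~\cite{Vojta89}, and then carrying the term $2g(\calC)-2+\#S$, together with the auxiliary counting terms arising from the exceptional divisors, cleanly through the whole computation so that it surfaces in the exact form of Definition~\ref{def:alg_hyp}; morphisms factoring through a point of $\PP^n$ are harmless since for them $\deg\varphi^*\calL=0$.
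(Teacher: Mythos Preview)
Your approach is essentially the paper's: translate Section~\ref{sec:proofofKey} to the function field setting using Theorem~\ref{thm:ffconst} in place of Theorem~\ref{Ru-Vojta}, and then deduce the result via the function field analogue of Lemma~\ref{DivisibilityandIP}. The geometric lemmas (Propositions~\ref{CohenMacaulay}, \ref{basicintersection}, Lemma~\ref{countinglambda}) carry over unchanged, as you say.

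There is, however, one genuine gap. The function field Ru--Vojta inequality in Theorem~\ref{thm:ffconst} carries the error term $c_1\max\{1,\,2g(\calC)-2+\#S\}$, not $c_1(2g(\calC)-2+\#S)$. Consequently, when $2g(\calC)-2+N^{[1]}_\varphi(D)\le 0$ your chain of inequalities yields only a uniform bound $\deg\varphi^*A\le C$, which does \emph{not} give \eqref{eq:alg_hyp} (whose right-hand side is then nonpositive). Your closing remark disposes only of constant maps; the cases $(g,N^{[1]})\in\{(0,0),(0,1),(0,2),(1,0)\}$ with $\varphi$ non-constant remain. The paper handles precisely this regime by invoking the Brody pseudo-hyperbolicity results of Section~\ref{sec:analytic}: such a $\varphi$ lifts, via uniformization of $\calC\setminus\varphi^{-1}(D)$ by $\CC$, to a non-constant entire curve $\CC\to X\setminus D$, whose image must therefore lie in the exceptional set $Z$ already produced in the analytic theorem. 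This is exactly the first of the two modifications the paper flags (``we can use the results in Section~\ref{sec:analytic} \dots\ for the case in which $2g(\calC)-2+N^{[1]}_\varphi(D)\le 0$'').

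A minor point: the enlargement $S'=S\cup\operatorname{supp}(\varphi^*D_0)$ is not needed. All applications of Theorem~\ref{thm:ffconst} and of the function field analogue of Theorem~\ref{LevinDuke} in the chain \eqref{eq:fromruvojta}--\eqref{heightbound1} are made with the original $S=\operatorname{supp}(\varphi^*D)$; the divisor $D_0$ enters only through the pointwise inequalities $\lambda_{D_i,v}\le\lambda_{D_0,v}$ for $v\notin S$ and through the trivial bound $N_{D_0,S}\le h_{D_0}$ in \eqref{counting}, neither of which requires adjoining $\operatorname{supp}(\varphi^*D_0)$ to $S$.
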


\begin{theorem}\label{th:ff1.4}
Let $n\ge 2$ and  $H_1,\hdots,H_{2n}$ be $2n$ hyperplanes in general position on $\mathbb P^n$ defined over $\kappa$.   Choose $n+1$ points $P_i$, $1\le i\le n+1$ such that $P_i\in H_i$, $1\le i\le n+1$, and $P_i\notin H_j$ if $i\ne j$ for $1\le j\le 2n$.  Let $\pi: X\to\mathbb P^n$ be the blowup of the $n+1$ points $P_i$, $1\le i\le n+1$, and let $D\subset   X$ be the strict transform of $H_1+\cdots+H_{2n}$. Then $X\setminus D$ is  algebraically pseudo-hyperbolic.
\end{theorem}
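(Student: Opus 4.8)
The plan is to transcribe the proof of Theorem~\ref{proposition7} into the function-field setting, replacing the arithmetic Ru--Vojta theorem (Theorem~\ref{Ru-Vojta}) by its function-field analogue --- the ``key Ru--Vojta statement'' established later in this section --- and Theorem~\ref{SubspaceVojta} by the function-field version of Vojta's refined subspace theorem. First I would fix the geometry exactly as in Section~\ref{sec:prop7}: let $\pi\colon X\to\mathbb P^n$ be the blow-up of $P_1,\dots,P_{n+1}$, with exceptional divisors $E_i=\pi^{-1}(P_i)$ and strict transforms $\widetilde H_i$. Then $X$ is smooth (hence Cohen--Macaulay), the divisor $D=\widetilde H_1+\cdots+\widetilde H_{2n}$ has normal crossings (the $H_i$ are in general position and each $P_i$ lies on $H_i$ and on no other $H_j$, $1\le j\le 2n$), and the $\widetilde H_i$ are in general position on $X$, hence intersect properly by Remark~\ref{gpCM}. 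All of the purely geometric input carries over verbatim over $\kappa$: Lemma~\ref{beta2} applies to $A=\sum_{i=1}^{n+1}\ell\widetilde H_i+\widetilde H_{n+2}$, which is big and nef, with $\beta_{A,\widetilde H_i}=\beta\ell$ for $i\le n+1$, $\beta_{A,\widetilde H_i}=\widetilde\beta\ell$ for $n+2\le i\le 2n$, and $\delta:=(n-1)\widetilde\beta+2\beta-2>0$. I would fix $\ell$ large and $\epsilon,\epsilon'$ small exactly as there.

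Given a non-singular projective curve $\calC$ and a morphism $\varphi\colon\calC\to X$ with $\varphi(\calC)\not\subset\supp D$ and image outside the exceptional set fixed below, let $S\subset\calC$ be the finite set of points mapping into $\supp D$. Then $\varphi^*\widetilde H_i$ is supported on $S$, so $N_{\widetilde H_i,S}(\varphi)=0$ and $m_{\widetilde H_i,S}(\varphi)=h_{\widetilde H_i}(\varphi)$ for each $i$, while $|S|=N^{[1]}_\varphi(D)$. Applying the function-field Ru--Vojta theorem to $\calO(A)$ and $\widetilde H_1,\dots,\widetilde H_{2n}$ gives a proper closed subset $Z\subset X$, independent of $\calC$, and --- after the same manipulation as in \eqref{usingRV}--\eqref{usingRV2}, using $\pi^*H_i=\widetilde H_i+E_i$ ($i\le n+1$) or $\widetilde H_i$ ($i\ge n+2$) --- the function-field counterpart of \eqref{usingRV2}, with the $O(1)$ replaced by a term $O\bigl(2g(\calC)-2+|S|\bigr)$ whose implied constant is independent of $\calC$ and $\varphi$.

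Independently, the function-field refined subspace theorem applied to $H_1,\dots,H_{2n}$ on $\mathbb P^n$ gives a finite union of hyperplanes $W\subset\mathbb P^n$, independent of $\calC$, and, since $h_E(\varphi)\ge N_{E,S}(\varphi)=\sum_{i=1}^{2n}N_{H_i,S}(\pi\circ\varphi)$ by the choice of $S$, the analogue of \eqref{hE}:
\[
h_E(\varphi)\ge(n-1-\epsilon')h_{\pi^*H}(\varphi)-O\bigl(2g(\calC)-2+|S|\bigr)
\]
whenever $(\pi\circ\varphi)(\calC)\not\subset W$. Combining these with $\delta>0$ exactly as in the proof of Theorem~\ref{proposition7} yields
\[
\tfrac{\delta}{2}\,h_{\pi^*H}(\varphi)\le C\bigl(2g(\calC)-2+N^{[1]}_\varphi(D)\bigr),
\]
where $C$ depends only on $n$, on the configuration of the $H_i$ and $P_i$, and on $\ell,\epsilon,\epsilon'$. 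Finally, since $\pi^*H$ is big, Kodaira's lemma (the function-field analogue of \cite[Proposition~10.11]{Vojta}) shows that for any fixed ample $\calL_0$ on $X$ there is $m\ge 1$ and a proper closed subset $Z'$ --- the support of an effective member of $|m\pi^*H-\calL_0|$ --- outside of which $\deg\varphi^*\calL_0=h_{\calL_0}(\varphi)\le m\,h_{\pi^*H}(\varphi)$. Taking the exceptional set to be the proper closed subset $Z\cup\pi^{-1}(W)\cup\supp E\cup Z'$ then gives \eqref{eq:alg_hyp} for every $\varphi$ with image outside it, i.e.\ the pseudo algebraic hyperbolicity of $(X,D)$.

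The geometric heart --- the computation of $\beta$ in Lemma~\ref{beta2} --- is literally identical to the arithmetic case, so the real content is the bookkeeping of error terms. The arithmetic proof needed only an ineffective $O(1)$; here every invocation of Ru--Vojta and of the refined subspace theorem must carry an error of the shape $O\bigl(2g(\calC)-2+N^{[1]}_\varphi(D)\bigr)$ with implied constant \emph{uniform} over all source curves $\calC$ and morphisms $\varphi$, since otherwise one cannot extract a single $\alpha$ as in Definition~\ref{def:alg_hyp}. Proving the function-field Ru--Vojta theorem in this model-independent form --- done separately in this section --- together with the corresponding truncated function-field subspace theorem, is therefore the main obstacle; granting these, the argument above is a routine transcription of Section~\ref{sec:prop7}.
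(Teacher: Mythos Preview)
Your transcription of the proof of Theorem~\ref{proposition7} is essentially correct and matches the paper's strategy: the geometric computation of $\beta$ via Lemma~\ref{beta2} is field-independent, and the argument proceeds by replacing Theorem~\ref{Ru-Vojta} and Theorem~\ref{SubspaceVojta} by their function-field analogues, with the additive $O(1)$ replaced by a multiple of the Euler characteristic of the punctured source curve.

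There is, however, one genuine omission. The function-field Ru--Vojta statement proved in this section (Theorem~\ref{thm:ffconst}) has error term $c_1\max\{1,\,2g(\calC)-2+|S|\}$, \emph{not} $c_1\bigl(2g(\calC)-2+|S|\bigr)$. Thus your final inequality actually reads
\[
\tfrac{\delta}{2}\,h_{\pi^*H}(\varphi)\le C\,\max\{1,\,2g(\calC)-2+N^{[1]}_\varphi(D)\},
\]
which, when $2g(\calC)-2+N^{[1]}_\varphi(D)\le 0$, only gives a uniform bound $h_{\pi^*H}(\varphi)\le 2C/\delta$ and not the inequality~\eqref{eq:alg_hyp} (whose right-hand side is then non-positive, so one must show $\varphi$ is constant or lands in the exceptional set). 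The paper handles this case separately --- it is precisely the first of the ``two modifications'' mentioned before Theorem~\ref{thm:ffconst}: when $2g(\calC)-2+N^{[1]}_\varphi(D)\le 0$, the open curve $\calC\setminus\varphi^{-1}(D)$ is one of $\PP^1$, $\CC$, $\CC^*$, or an elliptic curve, each dominated by $\CC$, so $\varphi$ induces an entire curve $\CC\to X\setminus D$, and the Brody pseudo-hyperbolicity established in Section~\ref{sec:analytic} forces its image into the exceptional set. You should add this dichotomy explicitly; without it the argument does not cover, e.g., rational curves meeting $D$ in at most two points.
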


\begin{theorem}\label{th:ff1.5}
    Let  $n\ge 2$, $q\ge 3n$ be an integer; for every index $i\in \mathbb Z/q\mathbb Z$, let $H_i$ be a hyperplane in $\mathbb P^n$ defined over $k$.  Suppose that $H_i$'s are in general position.  Let for each  index $i\in \mathbb Z/q\mathbb Z$, $P_i$ be the intersection point $\cap_{j=0}^{n-1} H_{i+j}$.  Let $\pi: X\to \mathbb P^n$ be the blow-up over the points $P_1,\hdots, P_q$ and let $\widetilde H_i\subset  X$ be the corresponding strict transform of $H_i$ and let $D=\widetilde H_1+\cdots+\widetilde H_q$.  Then $ X\setminus D$ is algebraically pseudo-hyperbolic.    
\end{theorem}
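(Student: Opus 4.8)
The plan is to follow exactly the strategy used in the arithmetic case (Theorem \ref{proposition1}), replacing the arithmetic Ru--Vojta theorem (Theorem \ref{Ru-Vojta}) by its function-field analogue. First I would recall the function-field version of the Ru--Vojta ``General Theorem'': for a non-singular projective variety $X$ over $\kappa$, effective Cartier divisors $D_1,\dots,D_q$ intersecting properly, and a big line sheaf $\calL$, there is a proper Zariski-closed subset $Z\subset X$ such that for every $\e>0$ and every morphism $\varphi:\calC\to X$ with $\varphi(\calC)\not\subset Z$,
\begin{equation*}
\sum_{i=1}^q \beta_{\calL,D_i}\, m_{\varphi}(D_i) \le (1+\e)\big(2g(\calC)-2+N^{[1]}_\varphi(D_i\text{'s})\big) + \e\, h_{\calL}(\varphi),
\end{equation*}
or more precisely the statement that gives algebraic hyperbolicity directly; this is the function-field statement corresponding to Theorem \ref{Ru-Vojta2}, and its proof (with control on the exceptional set via Vojta's subspace-type theorem) is to be included in this section as promised in the introduction. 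The key point, as in the number field and analytic cases, is that all the \emph{geometry} — the blow-up $\pi:X\to\PP^n$ of the points $P_i$, the computation $D\sim q\pi^*H-n\sum E_i$ from \eqref{D}, the nefness of $D-m\widetilde H_i$ for $0\le m\le n$ from Lemma \ref{DDi}, and the crucial strict inequality $\beta_{D,\widetilde H_1}=\cdots=\beta_{D,\widetilde H_q}>1$ from Lemma \ref{beta3} — is purely over $\kappa$ and has already been established, so it transfers verbatim.

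Concretely, the steps are: (1) verify that $\widetilde H_1,\dots,\widetilde H_q$ are in general position on the smooth variety $X$, which is done exactly as in the proof of Theorem \ref{proposition1} (a component of $\widetilde H_{i_1}\cap\cdots\cap\widetilde H_{i_s}$ mapping into the $P_j$'s forces transversality since the $H_i$ are hyperplanes in general position), so that Lemma \ref{DDi} and Remark \ref{gpCM} apply and the $\widetilde H_i$ intersect properly; (2) invoke Lemma \ref{beta3} to get a constant $\beta>1$ with $\beta=\beta_{D,\widetilde H_i}$ for all $i$; (3) apply the function-field Ru--Vojta theorem with $\calL=\calO(D)$, the divisors $\widetilde H_1,\dots,\widetilde H_q$, and $\e=\tfrac12(\beta-1)$, obtaining a proper Zariski-closed $Z\subset X$ such that for every $\varphi:\calC\to X$ with $\varphi(\calC)\not\subset Z$,
\begin{equation*}
\beta\sum_{i=1}^q m_\varphi(\widetilde H_i) \le (1+\e)\,\alpha_0\big(2g(\calC)-2+N^{[1]}_\varphi(D)\big)
\end{equation*}
for a suitable ample-height normalization; (4) use $\sum_{i=1}^q m_\varphi(\widetilde H_i) = h_{D}(\varphi) - \sum_i N_\varphi(\widetilde H_i) \ge h_D(\varphi) - N^{[1]}_\varphi(D)\cdot(\text{const})$ together with $\beta-1-\e=\tfrac12(\beta-1)>0$ to deduce $\deg\varphi^*\calO(D)\ll 2g(\calC)-2+N^{[1]}_\varphi(D)$; (5) since $D$ is big, compare $h_D$ with the height of a fixed ample line bundle up to a further proper closed subset $Z'$ (the function-field analogue of \cite[Proposition 10.11]{Vojta}), yielding \eqref{eq:alg_hyp} for all $\varphi$ with $\varphi(\calC)\not\subset Z\cup Z'$. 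This is precisely the definition of $X\setminus D$ being algebraically pseudo-hyperbolic.

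The main obstacle I anticipate is step (3): setting up and proving the function-field Ru--Vojta inequality with the \emph{correct shape} of error term — namely with $2g(\calC)-2+N^{[1]}_\varphi(D)$ on the right rather than a height term — and with the exceptional set independent of $\calC$. In the arithmetic case one gets $m_{D_i,S}$ bounded by $(1+\e)h_\calL$, and the $(D,S)$-integrality hypothesis converts the counting functions into the controlled term; in the function-field setting the analogue of $(D,S)$-integrality is built into the definition of algebraic hyperbolicity via $N^{[1]}_\varphi(D)$, so one must track that the counting contribution $\sum_i N_\varphi(\widetilde H_i)$ is bounded by $O(N^{[1]}_\varphi(D))$ and feed this into the Ru--Vojta machinery. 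Everything else — the intersection-number computations \eqref{Dn0}, \eqref{DH}, \eqref{m=N}, and the inequality $\beta>1$ — is already in hand from Lemma \ref{beta3}, so once the function-field Ru--Vojta statement is in place the argument is a routine transcription of the proof of Theorem \ref{proposition1}.
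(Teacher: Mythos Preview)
Your overall strategy --- carry over the geometric computations of Lemmas \ref{DDi} and \ref{beta3} unchanged and replace Theorem \ref{Ru-Vojta} by its function-field analogue --- is exactly what the paper does. However, two concrete points in your plan do not work as written.

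First, your step (4) is false as stated: there is no a priori inequality of the form $\sum_i N_\varphi(\widetilde H_i)\le C\cdot N^{[1]}_\varphi(D)$, since a curve can meet $D$ at a single point with arbitrarily high multiplicity. Correspondingly, the shape of the function-field Ru--Vojta inequality you wrote in step (3) is not the one that is actually available. The paper's Theorem \ref{thm:ffconst} gives instead
\[
\sum_{i=1}^q \beta_{\calL,D_i}\, m_{D_i,S}(x)\le (1+\epsilon)\,h_\calL(x)+c_1\max\{1,\,2g(\calC)-2+|S|\},
\]
with $c_1$ independent of $\calC$ and $S$. The way to pass from this to algebraic hyperbolicity is not to bound $N_\varphi$ by $N^{[1]}_\varphi$, but to \emph{choose} $S$ to be the support of $\varphi^{-1}(D)$; then $|S|=N^{[1]}_\varphi(D)$ and $N_{\widetilde H_i,S}(\varphi)=0$, so $\sum_i m_{\widetilde H_i,S}(\varphi)=h_D(\varphi)+O(1)$ and the inequality becomes $(\beta-1-\epsilon)h_D(\varphi)\le c_1\max\{1,2g(\calC)-2+N^{[1]}_\varphi(D)\}$, which with $\epsilon=\tfrac12(\beta-1)$ and the big-versus-ample comparison gives \eqref{eq:alg_hyp}.

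Second, you are missing a case. The bound just obtained is only useful when $2g(\calC)-2+N^{[1]}_\varphi(D)>0$. When $2g(\calC)-2+N^{[1]}_\varphi(D)\le 0$ (i.e.\ $\calC\cong\PP^1$ and $\varphi^{-1}(D)$ has at most two points), the right-hand side of \eqref{eq:alg_hyp} is non-positive and the inequality says $\varphi$ must be constant; this cannot come from the Ru--Vojta inequality above. The paper handles this case separately by lifting $\varphi$ to an entire curve $\CC\to X\setminus D$ and invoking the Brody pseudo-hyperbolicity established in Section \ref{sec:analytic}. You should add this case split explicitly.
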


We remark that, even if we stated the results in the so-called \emph{split case}, our proofs carry over almost verbatim to the non-split case as well.

As in the analytic setting, the proofs of the above statements follow the same lines of the proof of our arithmetic results and the same strategy as in our previous paper \cite{RTW} with two modifications. On one hand we can use the results in Section \ref{sec:analytic} instead of \cite[Theorem 8.3 B]{levin_annal} for the case in which $2g(\calC) - 2 + N^{[1]}_\varphi(D)\leq 0$. On the other hand we replace the use of Theorem \ref{Ru-Vojta} with the following analogue that uses a version of the Schmidt subspace theorem over function fields recently obtained in \cite[Theorem 15]{GSW}. In particular this gives a better control on the exceptional set.

\begin{theorem} \label{thm:ffconst}
    Let $X\subset \PP^m$ be a projective variety over $\kappa$ of dimension $n$, let $D_1,\cdots,D_q$ be effective Cartier divisors intersecting properly on  $X$, and let $\calL$ be a big line sheaf.  
    Then for any $\epsilon>0$, there exist  constants $c_1$ and $c_2$, independent of the curve $\calC$ and the set $S$, and a finite collection of  hypersurfaces $\mathcal Z$ (over $\kappa$) in $\PP^m$ of degree at most $c_2$ such that for any map $x=[x_0:\cdots:x_m] :\calC\to X$, where $x_i\in K$, outside the augmented base locus of $\calL$ we have 
  either
  \begin{align*}
    \sum_{i=1}^q \beta_{\mathcal L, D_i} m_{D_i,S}(x)\le (1+\epsilon) h_{\calL}(x)+c_1 \max\left\{ 1, 2g(\calC)-2+|S|\right\},
 \end{align*}
 or  the image of $x$  is contained in  $\mathcal Z$ .
 
 \end{theorem}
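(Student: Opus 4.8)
The plan is to transcribe, essentially line by line, Ru and Vojta's proof of the arithmetic General Theorem (our Theorem~\ref{Ru-Vojta}) as recorded in \cite{ruvojta}, replacing their appeal to Schmidt's subspace theorem by the function field version \cite[Theorem 15]{GSW} and carrying the exceptional locus through the argument. Fix $\epsilon>0$. As in \cite{ruvojta} one first selects, once and for all, a large integer $N$ depending only on $\epsilon$, $X$, $\calL$ and the $D_i$ (and \emph{not} on $\calC$ or $S$) so that for every $i$ the quotient defining $\beta_{\calL,D_i}$ is within $\epsilon$ of its limit; set $V_N:=H^0(X,\calL^N)$, $M+1:=\dim_\kappa V_N$, and let $\phi\colon X\dashrightarrow\PP^M$ be the rational map attached to $V_N$ --- a morphism birational onto its image away from the augmented base locus $\mathbf B_+(\calL)$, with $\phi^*\calO(1)\cong\calL^N$ there. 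For the maps $x\colon\calC\to X$ in the statement, i.e.\ those with $x(\calC)\not\subset\mathbf B_+(\calL)$, the composite $\phi\circ x\colon\calC\to\PP^M$ is then well defined and $h_{\calO(1)}(\phi\circ x)=N\,h_\calL(x)+O(1)$.

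The heart of the matter is the local/combinatorial machinery of \cite{ruvojta}. For each $i\in\{1,\dots,q\}$ and each $v\in S$ one filters $V_N$ by order of vanishing along $D_i$ at the component ``at $v$''; the hypothesis that the $D_i$ \emph{intersect properly} is exactly what makes the associated graded dimensions computable via the regular-sequence estimates of \cite{ruvojta}, and since this concerns only the local rings $\calO_{X,x}$ it is insensitive to the base field. Using the identity $\sum_{m\ge0}h^0(X,\calL^N(-mD_i))=\beta_{\calL,D_i}\,N(M+1)\,(1+o(1))$ together with the fact that the subspaces $H^0(X,\calL^N(-mD_i))\subseteq V_N$ cut out only finitely many linear subvarieties of $\PP^M$, all independent of $\calC$ and $S$, Ru and Vojta's combinatorial filtration lemma produces a fixed finite set of hyperplanes $H_1,\dots,H_Q$ of $\PP^M$ (after a Veronese re-embedding, if needed), depending only on $N$ and the $D_i$, together with an estimate
\[
\sum_{i=1}^q \beta_{\calL,D_i}\,m_{D_i,S}(x)\ \le\ \frac{1}{N(M+1)}\sum_{v\in S}\max_{J}\sum_{j\in J}\lambda_{H_j,v}(x)\ +\ \epsilon\,h_{\calL}(x)\ +\ O\!\big(\max\{1,2g(\calC)-2+|S|\}\big),
\]
where $J$ ranges over the linearly independent subsets of $\{H_1,\dots,H_Q\}$. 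The $O$-term here plays the role of the $M_k$-constant of the number field argument: it measures the zeros and poles on $\calC$ of the transition sections between the various $v$-adapted bases and one fixed basis of $V_N$.

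One then applies \cite[Theorem 15]{GSW} to $\phi\circ x\colon\calC\to\PP^M$ and the hyperplanes $H_1,\dots,H_Q$: it bounds $\sum_{v\in S}\max_J\sum_{j\in J}\lambda_{H_j,v}(x)$ by $(M+1+\epsilon)\,h_{\calO(1)}(\phi\circ x)+c_1\max\{1,2g(\calC)-2+|S|\}=(M+1+\epsilon)\,N\,h_\calL(x)+c_1\max\{1,2g(\calC)-2+|S|\}$, with $c_1$ independent of $\calC$ and $S$, unless $\phi(x(\calC))$ lies in one of finitely many hypersurfaces of $\PP^M$ whose degree is bounded in terms of $M$ and $Q$ alone. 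In the first case, substituting into the displayed inequality, dividing by $N(M+1)$, and renaming $\epsilon$ gives exactly $\sum_i\beta_{\calL,D_i}\,m_{D_i,S}(x)\le(1+\epsilon)\,h_\calL(x)+c_1'\max\{1,2g(\calC)-2+|S|\}$. In the second case, the corresponding exceptional subvarieties of $X\subseteq\PP^m$ have $\PP^m$-degree bounded in terms of $X$, $\calL$, $N$ and $Q$ only; adjoining hypersurfaces of $\PP^m$ of bounded degree containing them and the finitely many components of $\mathbf B_+(\calL)$ produces the desired finite collection $\mathcal Z$ of hypersurfaces of $\PP^m$ of degree at most some $c_2$, again independent of $\calC$ and $S$, with $x(\calC)\subseteq\mathcal Z$. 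This is the asserted dichotomy.

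The step I expect to be the genuine obstacle is establishing the two \emph{uniform} error estimates in the function field world, namely that both the combinatorial comparison term in the displayed inequality and the exceptional contribution coming from \cite[Theorem 15]{GSW} are $O(\max\{1,2g(\calC)-2+|S|\})$ with implied constant independent of $\calC$ and $S$; this is precisely where the geometry of $\calC$ enters, and it requires bounding the degrees on $\calC$ of all the auxiliary transition sections purely in terms of the genus and $|S|$. A second, more technical issue is verifying that \cite[Theorem 15]{GSW} delivers its exceptional hypersurfaces with a degree bound depending only on the ambient projective space and the number of hyperplanes --- so that, after any Noether-normalization-type linear projection needed to meet its exact hypotheses (and tracking degrees through that projection), the pullbacks to $\PP^m$ still have uniformly bounded degree. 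Granting those two points, everything else is a faithful transcription of \cite{ruvojta} and of the Autissier-type filtration estimates already deployed in Section~\ref{sec:prop7}.
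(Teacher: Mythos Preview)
Your proposal is correct and is essentially the paper's own argument: run the Ru--Vojta filtration to reduce to a subspace-type inequality in $\PP^M$, then invoke \cite[Theorem~15]{GSW} in place of Schmidt. The paper cites \cite[Theorems~7.5 and 7.6]{RTW} rather than \cite{ruvojta} directly, but the content is the same.

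The two issues you flag as obstacles are in fact more benign than you fear. First, the ``combinatorial comparison term'' in your displayed inequality is not an error measured on $\calC$: the filtration inequality (your analogue of the paper's \eqref{eq:weilbase_ff}) is a \emph{pointwise} statement on $X(\kappa)$, with a genuine $O(1)$ constant depending only on $X$, $\calL^N$ and the $D_i$. There are no ``transition sections on $\calC$'' to control; summing over $\mathfrak p\in S$ simply gives $O(|S|)$, which is absorbed into $c_1\max\{1,2g(\calC)-2+|S|\}$. Second, \cite[Theorem~15]{GSW} already delivers exactly what you need: an explicit error $\tfrac{M(M-1)}{2}(2g(\calC)-2+|S|)$ and a \emph{finite} collection $\mathcal R$ of linear subspaces of $\PP^{M-1}$ defined over $\kappa$ and independent of $\calC$ and $S$. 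Each hyperplane $H=\{a_1z_1+\cdots+a_Mz_M=0\}$ in $\mathcal R$ pulls back under the fixed map $\Phi=[\phi_1:\cdots:\phi_M]$ to the locus $a_1\phi_1+\cdots+a_M\phi_M=0$ in $X\subset\PP^m$; since the $\phi_i$ are fixed global sections of $\calL^N$, this is a hypersurface in $\PP^m$ of degree bounded independently of $\calC$ and $x$. No Noether normalisation or projection trick is needed.
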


 \begin{proof}   
The proof is similar to the first part of the proof of \cite[Theorem 7.6]{RTW}.  We will follow its argument and notation and only indicate the modification.
 Let $\epsilon>0$ be given.  Since $\calL$ is a big line sheaf, there is a constant $c$ such that  $\sum_{i=1}^qh_{D_i}(x)\le c h_{\calL}(x)$ for all $x\in X(K)$ outside the augmented base locus $B$ of $\calL$. 
 By the properties of the local heights, together with the fact that $m_{D_i,S}\le h_{D_i}+O(1)$, we can choose $\beta_i\in\Q$ for all $i$ such that 
 \[
 \sum_{i=1}^q (\beta_{\mathcal L, D_i}-\beta_i) m_{D_i,S}(x)\le \frac{\epsilon}{2}h_{\calL}(x)
 \]
 for all $x\in X\setminus B(K)$. Therefore, we can assume that $\beta_{\mathcal L, D_i}=\beta_i\in\Q$ for all $i$ and also that $\beta_i\ne 0$ for each $i$. From now on we will assume that the point $x \in X(K)$ does \emph{not} lie on $B$.

 Choose positive integers $N$ and $b$ such that
 \begin{equation}\label{aut_choices}
   \left( 1 + \frac nb \right) \max_{1\le i\le q}
       \frac{\beta_i Nh^0(X, \calL^N) }
         {\sum_{m\ge1} h^0(X, \calL^N(-mD_i))}
     < 1 + \epsilon\;.
 \end{equation}

 Then, using \cite[Theorem 7.5]{RTW} with the same notation, we obtain
  \begin{equation}\label{eq:weilbase_ff}
   \begin{split}
   &\frac b{b+n} \left( \min_{1\leq i \leq q} \sum_{m\ge 1}
     \frac{h^0(\calL^N(-mD_i))}{\beta_i} \right)
     \sum_{i=1}^q  \beta_i \lambda_{D_i,\mathfrak p} (x) \\
   &\qquad\le \max_{1\le i\le T_1} \lambda_{\mathcal B_i,\mathfrak p}  (x)+ O(1)= \max_{1\le i\le T_1} \sum_{j\in J_i} \lambda_{s_j,\mathfrak p} (x) + O(1).
   \end{split}
 \end{equation}
 Let $M=h^0(X,\calL^N)$, let the set $\{\phi_1,\dots,\phi_M\}$ be a basis of the vector space $H^0( X, \calL^N)$, and let 
 \begin{align}\label{Phi}
 \Phi=[\phi_1,\dots,\phi_M]:X \dashrightarrow \PP^{M-1}(\kappa)
 \end{align}
  be the corresponding rational map.  By  \cite[Theorem 15]{GSW},  there exists a finite collection of linear subspaces $\mathcal R$ over $\kappa$ such that, whenever  $\Phi\circ x$ is not in $\mathcal R$, we have the following
 \begin{equation}\label{eq:schmidt2_ff}
   \sum_{\mathfrak p\in S}\max_J \sum_{j\in J} \lambda_{s_j,\mathfrak p}(x) 
     \le M \, h_{\calL^N}(x)  +\frac{M(M-1)}2(2g-2+|S|),
 \end{equation}
 here the maximum is taken over all subsets $J$ of $\{1,\dots,T_2\}$
 for which the sections $s_j$, $j\in J$, are linearly independent (with the same notation as in the proof of \cite[Theorem 7.1]{RTW}).
 We first consider when $\phi_1,\dots,\phi_M$ are linearly  independent over $\kappa$.
 Combining \eqref{eq:weilbase_ff} and \eqref{eq:schmidt2_ff} gives
 $$\sum_{i=1}^q   \beta_i m_{D_i,S}(x)
   \leq \left( 1 + \frac nb \right) \max_{1\le i\le q}
     \frac{\beta_i }
       {\sum_{m\ge1} h^0(\calL^N(-mD_i))} \, M\, h_{\calL^N}(x)  +c_1'(2g-2+|S|) + O(1),$$
       where $c_1'=\frac{M(M-1)}2. $
  Using
 \eqref{aut_choices} and the fact that $h_{\calL^N}(x)=Nh_{\calL}(x)$,
 we have
 \begin{equation*}
  \sum_{i=1}^q   \beta_i m_{D_i,S }(x)
  \leq \left( 1+\epsilon \right)h_{\calL}(x)+c_1'(2g-2+|S|) + O(1),
  \end{equation*}
  which implies the first case of the Theorem.
 
  To conclude we note that, if  $\Phi\circ x$ is  in one of the linear subspace of $\mathcal R$ over $\kappa$  in $\PP^{M-1}$, then $a_1\phi_1(x) + \dots + a_M \phi_M(x) = 0$, where $H=\{a_1z_1 + \dots + a_M z_M = 0\}$ is one of the hyperplanes (over $\kappa$) in $\PP^{M-1}$ coming from $\mathcal R$.
  
 On the other hand, since $\phi_1,\dots,\phi_M$ is a basis of $H^0(X,\calL^N)$, it follows that $\Phi(X)$ is not contained in $H$, hence   $x(\calC)$ is contained in  is the hypersurface  coming from $a_1\phi_1 + \dots + a_M \phi_M = 0$  in $\PP^{m}$ (as $X\subset \mathbb P^m$) whose degree is bounded independently of $\calC$ and $x$ as wanted.   Moreover, since $\mathcal R$ is a finite collection of  linear subspaces over $\kappa$  in $\PP^{M-1}$, the number of $H$ is finite and hence the number of hypersurfaces obtained above is also finite.
 
 \end{proof}

\nocite{}

\bibliography{references}{}
\bibliographystyle{alpha}
\end{document}